\documentclass[leqno,platex]{article}
  \usepackage{amsmath,amsthm,amssymb,tikz-cd,mathtools,caption,cleveref,pgfplots,tikz,here,enumitem}
  \usetikzlibrary{positioning}
  \usetikzlibrary{arrows}
  \theoremstyle{definition}
  \newtheorem{thm}{Theorem}[section]
  
  \crefname{thm}{Theorem}{Theorems}

  \crefname{prop}{Proposition}{Propositions}
  \newtheorem{lem}[thm]{Lemma}
  
  \crefname{lem}{Lemma}{Lemmas}

  \crefname{fact}{Fact}{Facts}

  \crefname{figure}{figure}{figures}
  \crefformat{enumi}{#2#1#3}

  \newcommand{\lie}[1]{\mathfrak{#1}}
  \newcommand{\bb}[1]{\mathbb{#1}}
  \newcommand{\bbZ}{\bb{Z}}
  \newcommand{\bbR}{\bb{R}}
  \newcommand{\bbC}{\bb{C}}
  \newcommand{\mcl}[1]{\mathcal{#1}}

  \newcommand{\lig}{\lie{g}}
  \newcommand{\lih}{\lie{h}}
  \newcommand{\lisl}{\lie{sl}}
  \newcommand{\lin}{\lie{n}}
  \newcommand{\lis}{\lie{s}}
  \newcommand{\hsp}{\hspace{2em}}
  \newcommand{\lra}[1]{\langle #1 \rangle}
  
  \newcommand{\ovl}[1]{\overline{#1}}
  \newcommand{\klis}{k_\lis}
  \newcommand{\llis}{l_\lis}
  \newcommand{\mlis}{m_\lis}
  \newcommand{\nlis}{n_\lis}
  \newcommand{\plis}{p_\lis}
  \allowdisplaybreaks[4]

  \newlist{newEnum}{enumerate}{3}
  \setlist[newEnum,1]{label=(\arabic*),ref=(\arabic*)}
  \setlist[newEnum,2]{label=(\alph*),
                    ref  =\themyenumi{.(\alph*)}}
  \setlist[newEnum,3]{label=(\arabic*),
                    ref  =\themyenumii{.(\arabic*)}}
  \crefname{newEnumi}{}{}
  \crefname{newEnumii}{}{}
  \crefname{newEnumiii}{}{}
  
  \crefname{equation}{}{}

\title{Irreducible module decompositions
of rank 2 symmetric hyperbolic Kac-Moody Lie algebras
by $\lisl_2$ subalgebras which are generalizations of principal $\lisl_2$ subalgebras}
\author{TSURUSAKI Hisanori\thanks{Graduate School of Mathematical Sciences, University of Tokyo
, htsurusaki1929@gmail.com}}
\date{}

\begin{document}
\maketitle
\setcounter{page}{1}
\begin{abstract}
  There exist principal $\lisl_2$ subalgebras for hyperbolic Kac-Moody Lie algebras.
  In the case of rank 2 symmetric hyperbolic Kac-Moody Lie algebras,
  certain $\lisl_2$ subalgebras are constructed in a previous paper.
  These subalgebras are generalizations of principal $\lisl_2$ subalgebras.
  We show that the rank 2 symmetric hyperbolic
  Kac-Moody Lie algebras themselves are irreducibly decomposed
  under the action of this $\lisl_2$ subalgebras.
  Furthermore, we classify irreducible components of the decomposition.
  In particular, we obtain multiplicities of unitary principal series
  and complementary series.
\end{abstract}
\section{Introduction}
A nilpotent orbit in a finite dimensional simple Lie algebra $\lig_0$
is an orbit obtained by acting on the nilpotent element $x$ of $\lig_0$
by inner automorphisms.
In \cite{dyn}, these are classified by weighted Dynkin diagrams.
From the Jacobson-Morozov theorem, for a nilpotent element $x$ of $\lig_0$,
we can construct a $\lisl_2$-triple with $x$ as a nilpositive element
(\cite[Theorem 3.3.1]{cm}).

This makes it equivalent to classify nilpotent orbits of $\lig_0$ and
to classify $\lisl_2$ triples in $\lig_0$ up to inner automorphisms.
Among the nilpotent orbits of a finite dimensional simple Lie algebra,
the one whose dimension as an algebraic variety is maximal is called
the principal nilpotent orbit.
Correspondingly, we can construct
a principal $SO(3)$ subalgebra that is compatible with compact involution
(\cite{kos}).

Kac-Moody Lie algebras are generalizations of finite-dimensional simple Lie algebras.
They are classified into three types: finite type, affine type,
and indefinite type.
The finite type Kac-Moody Lie algebras are finite dimensional
simple Lie algebras.
Within indefinite Kac-Moody Lie algebra,
there is a class called hyperbolic Kac-Moody Lie algebra.
A hyperbolic Kac-Moody Lie algebra is an indefinite type Kac-Moody
Lie algebra such that any true subdiagram of its Dynkin diagram
is of finite or affine type.
Hyperbolic Kac-Moody Lie algebras, in particular $E_{10}$, are noted to
be related to string theory(\cite{vis}).

By analogy with the above theory, in \cite{no},
for a hyperbolic Kac-Moody Lie algebra, its
principal $SO(1, 2)$ subalgebra was constructed.
Note that \cite{gow} shows that it is possible to
construct a principal $SO(1, 2)$ subalgebra for
certain indefinite Kac-Moody Lie algebra that is not hyperbolic.

Corresponding to this principal $SO(1, 2)$ subalgebra,
we can construct a principal $\lisl_2$-subalgebra
in a hyperbolic Kac-Moody Lie algebra.
In \cite{tsu}, for the rank 2 symmetric hyperbolic Kac-Moody Lie algebras $\lig$,
the following result is obtained.
Let $R_\lig$ be the space that the positive real root vectors span.
We consider $\lisl_2$ subalgebras whose nilpositive element exists
in $R_\lig$.
Then we can construct certain $\lisl_2$ subalgebras.
These subalgebras are generalizations of principal $\lisl_2$ subalgebras.

In this paper, for an $\lisl_2$ subalgebra
of rank 2 symmetric hyperbolic Kac-Moody Lie algebra $\lig$ constructed in \cite{tsu},
we show $\lig$ is decomposed into irreducible $\lisl_2$-modules
by its action on $\lig$.

We are going to more details.
Let $\lis$ be an $\lisl_2$ subalgebra constructed in \cite{tsu}.
Let $H, X, Y$ be an $\lisl_2$ triple and assume that $\lis$ is spanned by $H, X, Y$.
Let $e_i, f_i, h_i ,\; (i = 0, \ldots, n - 1)$ be the Chevalley generators of $\lig$.
Let $\lih_\bbR$ be the $\bbR$-span of $h_i$'s.
From \cite[Theorem 2.2]{kac}, $\lig$ has a $\bbC$-valued nondegenerate invariant symmetric
bilinear form $(\cdot \mid \cdot)$ called the standard form.
An antilinear automorphism $\omega_0$ of $\lig$, called compact involution,
is determined by
\begin{align*}
  \omega_0(e_i) &= -f_i,\\
  \omega_0(f_i) &= - e_i \hsp (i = 0, \ldots, n - 1),\\
  \omega_0(h) &= -h \hsp (h \in \lih_\bbR).
\end{align*}
From \cite[\S 2.7]{kac}, we can determine a nondegenerate Hermitian form
$(\cdot \mid \cdot)_0$ on $\lig$ with $(x \mid y)_0 = - (\omega_0 (x) \mid y)$.

An $\lis$-module $V \subset \lig$ is called unitarizable if following conditions are satisfied.
\begin{newEnum}
  \item $(\cdot \mid \cdot)_0$ on $V$ is positive definite.
  \item For $v_1, v_2 \in V$ and $s \in \lis$, the following condition is satisfied.
  \begin{align*}
    ([s, v_1], v_2)_0 = - (v_1, [\omega_0 (s), v_2])_0.
  \end{align*}
\end{newEnum}
\begin{thm}[\cref{decomposabilityOfG}]
  $\lig$ can be decomposed into a direct sum of irreducible $\lis$-modules
  such that $\lis$ itself is one of the direct summand.
  All of these modules except for $\lis$ are unitarizable.
\end{thm}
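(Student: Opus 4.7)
The plan is to decompose $\lig$ orthogonally with respect to the Hermitian form $(\cdot \mid \cdot)_0$ and then invoke unitarity to produce the irreducible summands. A preliminary observation is that condition (2) of unitarizability is automatic on any $\lis$-submodule of $\lig$: for $s \in \lis$ and $v_1, v_2 \in \lig$, using that $\omega_0$ respects Lie brackets and that the standard form is invariant,
\begin{align*}
  ([s, v_1] \mid v_2)_0
  &= -(\omega_0([s, v_1]) \mid v_2) = -([\omega_0(s), \omega_0(v_1)] \mid v_2)\\
  &= (\omega_0(v_1) \mid [\omega_0(s), v_2]) = -(v_1 \mid [\omega_0(s), v_2])_0.
\end{align*}
Consequently, any $\lis$-submodule of $\lig$ on which $(\cdot \mid \cdot)_0$ is nondegenerate has an $\lis$-stable orthogonal complement, so such a submodule can be split off as a direct summand.

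The first step is to apply this with $V = \lis$. With $Y = -\omega_0(X)$, one checks in the basis $\{H, X, Y\}$ that the cross terms $(H \mid X)_0$, $(H \mid Y)_0$, $(X \mid Y)_0$ all vanish, that $(X \mid X)_0$ and $(Y \mid Y)_0$ are positive, and that $(H \mid H)_0 = (H \mid H)$. Verifying from the explicit form of $H$ given in \cite{tsu} that $(H \mid H) \neq 0$ establishes nondegeneracy of $(\cdot \mid \cdot)_0$ on $\lis$, and hence the orthogonal decomposition $\lig = \lis \oplus \lis^\perp$ of $\lis$-modules.

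The main step is to show that $(\cdot \mid \cdot)_0$ is positive definite on $\lis^\perp$. Using the explicit compact involution one verifies that $(\cdot \mid \cdot)_0$ is already positive definite on each pair $\lig_\alpha \oplus \lig_{-\alpha}$ of root spaces, so that the only negative directions of the form on $\lig$ lie in $\lih_\bbR$, where the standard form has signature $(1, 1)$. Positivity on $\lis^\perp$ therefore reduces to the assertion that the negative line of $(\cdot \mid \cdot)$ in $\lih_\bbR$ is contained in $\bbR H = \lis \cap \lih_\bbR$, which is where the explicit construction of $\lis$ from \cite{tsu} enters essentially.

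Finally, with $\lis^\perp$ a unitary $\lis$-module, I would decompose it into irreducible $\lis$-submodules by diagonalizing the $\lis$-Casimir element and inspecting the isotypic components: each Casimir eigenvalue, matched against the unitary dual of $\lisl_2(\bbR)$, identifies the corresponding isotypic piece as a direct sum of copies of a unitary principal series or complementary series representation, and a choice of basis of the multiplicity space then yields the splitting into irreducibles with the promised multiplicities. I expect the main obstacle to be the positivity step: identifying the negative direction of $(\cdot \mid \cdot)$ in $\lih_\bbR$ with $\bbR H$ depends critically on the specific $H$ constructed in \cite{tsu}, and the whole scheme collapses without this identification. Once positivity is in hand, the remaining decomposition is driven by the unitary representation theory of $\lisl_2(\bbR)$.
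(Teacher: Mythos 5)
Your first three steps coincide with the paper's own argument: the skew-adjointness computation is exactly the paper's observation (via \cite[\S 2.7]{kac}) that condition (2) of unitarizability is automatic; the splitting $\lig = \lis \oplus \lis^{\perp}$ is the paper's $U$; and your positivity argument --- positive definiteness of $(\cdot \mid \cdot)_0$ on $\lin^+ \oplus \lin^-$, signature $(1,1)$ on $\lih$, with the negative direction absorbed into $\lis \cap \lih = \bbC H$ --- is precisely \cref{positiveDefinite}. So the part you flag as the main obstacle is handled correctly and in the same way as the paper. The gap is in your final step.

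Two problems there. First, the assertion that each Casimir-isotypic piece of $\lis^{\perp}$ is a direct sum of copies of a unitary principal or complementary series representation is false: the decomposition also contains irreducible highest and lowest weight modules (unitarizable, discrete-series type), and the paper's later sections are largely devoted to locating and counting these; moreover a single Casimir eigenvalue does not determine the irreducible unitary representation, since highest weight, lowest weight, and principal/complementary series modules can share it. Second, and more seriously, you never justify why the infinite-dimensional unitarizable module $\lis^{\perp}$ decomposes as an \emph{algebraic direct sum} of irreducibles at all --- a unitary representation need not do so in general, and ``diagonalizing the Casimir'' on an infinite-dimensional space is not meaningful without more structure. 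The paper's proof rests essentially on the dominance of $H$, which forces every $H$-eigenspace of $U$ to be finite-dimensional; this admissibility (i) makes orthogonal complements of $H$-stable subspaces genuine complements (\cref{directDecomp}), (ii) produces an irreducible submodule inside any nonzero submodule by finding a Casimir eigenvector in a finite-dimensional $H$-eigenspace (\cref{UhasIrredSubmod}), and (iii) allows a Zorn's lemma argument to exhaust $U$ (\cref{decomposabilityOfU}). Some version of this admissibility argument must be supplied to close your sketch; identifying which unitary representations actually occur is then a separate task, carried out in the paper only after complete reducibility is established.
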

Also, we classify how many highest weight modules, lowest weight modules,
and modules that are neither highest weight module nor lowest weight module appear
in this decomposition.
We regard a root $s \alpha_1 + t \alpha_2$ as a point $(s, t)$ in $xy$-plane,
and We define a region $L, -L$ in $xy$-plane in \S 5.
If a root $\alpha$ satisfies $\alpha (H) \in (0, 2)$, $\alpha \in L$.
If a root $\alpha$ satisfies $\alpha (H) \in (-2, 0)$, $\alpha \in -L$.
\begin{thm}[\cref{classification}]
  We consider an irreducible decomposition of $\lig$ by the action of $\lis$.
  \begin{newEnum}
    \item \label{classification1Section1} Let $M$ be an irreducible component of decomposition of $\lig$, which
    contain a root space for a real root in $L$.
    Then, $M$ is a unitary principal or complementary series representation.
    \item \label{classification2Section1} (cf. \cite[Proposition 7.3]{tsu}) There is a unitary principal
    series representation containing an 1-dimensional space in $\lih$.
    \item $\lig$ is decomposed into a direct sum of $\lis$-submodules described in \cref{classification1Section1}
    and \cref{classification2Section1} above, $\lis$ itself, irreducible lowest weight modules,
    and irreducible highest weight modules.
  \end{newEnum}
\end{thm}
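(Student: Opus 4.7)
The plan is to combine \cref{decomposabilityOfG} with the classification of irreducible unitarizable $\lisl_2(\bbR)$-modules (trivial, highest weight discrete series, lowest weight discrete series, complementary series, unitary principal series), each determined by whether its $\ad(H)$-spectrum is bounded above, bounded below, or unbounded in both directions, together with the Casimir eigenvalue and the parity of the weights. Write $\lig = \lis \oplus \bigoplus_i M_i$ as a direct sum of irreducible $\lis$-modules with each $M_i$ unitarizable, and identify each $M_i$ accordingly.

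For part \cref{classification1Section1}, let $M$ contain the root space $\lig_\alpha$ for some real root $\alpha \in L$, so that $\alpha(H) \in (0, 2)$. I would prove that $M$ has no $\ad(X)$-highest vector and no $\ad(Y)$-lowest vector. If $M$ had a highest weight vector of $H$-weight $\mu$, then $\mu = \alpha(H) + 2k$ for some nonnegative integer $k$, and this vector would live in the $\mu$-eigenspace of $\ad(H)$, itself a sum of root spaces $\lig_\beta$ with $\beta(H) = \mu > 0$. Using the explicit formula for $H$ from \cite{tsu} together with the combinatorial structure of the rank $2$ symmetric hyperbolic root system, one checks that no such $\beta$ admits an $\ad(X)$-annihilated vector; by symmetry no lowest weight vector exists either. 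Consequently the $H$-weights of $M$ are $\alpha(H) + 2\bbZ$, unbounded in both directions, so $M$ must be a unitary principal or complementary series.

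For part \cref{classification2Section1}, the $1$-dimensional subspace of $\lih$ supplied by \cite[Proposition 7.3]{tsu} generates an irreducible $\lis$-submodule containing a nonzero $H$-weight-$0$ vector; by the same bound-free analysis as in \cref{classification1Section1} (now adapted to the even-parity case) this submodule is of principal or complementary type, and computing the Casimir eigenvalue from the norms of the iterates of the generating vector under $\ad(X), \ad(Y)$ confirms that it falls in the unitary principal series range. Finally, any remaining irreducible summand $M_i$ must contain only weight vectors with weights outside $(-2, 2)$ (otherwise one of the previous parts would apply), so its $\ad(H)$-spectrum is bounded either below by a value $\geq 2$ (giving a lowest weight discrete series) or above by a value $\leq -2$ (giving a highest weight discrete series); this yields part~(3). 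The main obstacle is the combinatorial case analysis in part \cref{classification1Section1} that rules out any potential highest or lowest weight, which depends on a precise inventory of real roots $\beta$ with prescribed values of $\beta(H)$ in the rank $2$ symmetric hyperbolic root system and the specific shape of $H$ fixed in \cite{tsu}.
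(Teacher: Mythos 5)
There is a genuine gap in two places. First, in your part~(3) you assert that any remaining irreducible summand ``must contain only weight vectors with weights outside $(-2,2)$ (otherwise one of the previous parts would apply).'' This does not follow: a summand that is neither highest nor lowest weight has $H$-spectrum $\lambda_0+2\bbZ$ and therefore meets $[0,2)$, but the corresponding weight space is a sum of root spaces $\lig_\beta$ with $\beta \in L$, and $\beta$ may be an \emph{imaginary} root of $L$. Your part~(1) only covers summands meeting a \emph{real} root space in $L$, and part~(2) only covers the piece of $\lih$, so neither applies. The paper closes exactly this hole with the type A/B/C classification of Section~6: real roots in $L$ are type B (\cref{realRootIsTypeB}), imaginary roots in $L$ are type A or C (\cref{imagRootIsTypeAorC}), type A roots generate lowest weight modules (\cref{typeAIsLowest}), and a module through a type C root is either a highest/lowest weight module or also passes through a real root space (\cref{typeCIsLowestOrTypeB}). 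Without some version of this analysis of imaginary roots in $L$, part~(3) is unproved.

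Second, the heart of part~(1) is deferred to ``one checks that no such $\beta$ admits an $\ad(X)$-annihilated vector,'' quantified over all weights $\alpha(H)+2k$, $k\ge 0$. That check is both stronger than necessary (you only need to exclude annihilated vectors lying in $M$, not in all of $\lig_\beta$) and not feasible as stated, since those weight spaces are sums of imaginary root spaces of growing multiplicity, and the decomposition does in fact contain many highest and lowest weight summands whose extremal vectors sit in such spaces. The paper avoids this entirely by a local computation at the one-dimensional real root space $\lig_{s\alpha_0+t\alpha_1}$: using \cref{XYeIsConstant} it evaluates the Casimir eigenvalue there via $8\mu=\lambda^2-2\lambda+4k_0$ with $[-c_0^2 w_0(h_p),E]=k_0E$, reduces the existence of a highest or lowest weight vector to an integer solution of $s_1(k)=0$, and shows no such solution exists because $k_0<0$ (proved by an explicit induction on the sequence $F_n$, using that real roots in $L$ are of type B). You would need to supply an argument of comparable precision; the boundedness dichotomy and the unitarizability from \cref{decomposabilityOfG} are correctly invoked, but they are the easy part.
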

We also discuss how to calculate multiplicities of irreducible highest or lowest modules (\S 7).
Furthermore, we classified irreducible components
which are neither highest weight modules nor lowest weight modules,
as either unitary principal or complementary series representations.
\begin{thm}[\cref{AlmostAllsl2ModulesAreComplementary}]
  We consider irreducible components which are neither highest weight modules nor lowest weight modules
  and contain root vectors about real roots in $L$, obtained by \cref{classification}.
  The irreducible components are complementary series representations,
  except those described in \cref{AlmostAllsl2ModulesAreComplementary_IEqualsJ},
  \cref{AlmostAllsl2ModulesAreComplementary_IIsJMinus1}
  and \cref{AlmostAllsl2ModulesAreComplementary_IIsJPlus1}.
  For the exceptions, the irreducible components are unitary principal series representations.
\end{thm}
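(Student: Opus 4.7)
The strategy is to attach to every $\lis$-irreducible component $M$ in the decomposition of \cref{classification} (that is neither a highest weight nor a lowest weight module) a numerical invariant distinguishing the unitary principal series from the complementary series, and then to compute this invariant explicitly in terms of the real root $\alpha \in L$ that generates $M$.

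First I would recall the classification of irreducible unitarizable $\lisl_2$-modules that are neither highest nor lowest weight. Up to isomorphism such a module is determined by the parity of the $H$-spectrum together with the eigenvalue of the Casimir $\Omega = \tfrac{1}{2}H^2 + XY + YX$. There is an explicit threshold: the complementary series corresponds to Casimir eigenvalues strictly above it and the unitary principal series to Casimir eigenvalues at or below it. Since \cref{decomposabilityOfG} has already secured unitarizability of each $M$, it is enough to compute the Casimir eigenvalue $c(M)$ and to locate it relative to this threshold.

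Next I would compute $c(M)$ by applying $\Omega$ to the distinguished root vector $x_\alpha$ that generates $M$ (the vector made available by \cref{classification}). The term $\tfrac{1}{2}H^2 \cdot x_\alpha$ is simply $\tfrac{1}{2}\alpha(H)^2 x_\alpha$, while $(XY + YX) \cdot x_\alpha$ is evaluated using the description of $X$ and $Y$ from \cite{tsu} as rescaled sums of real root vectors, the commutation relations of $\lig$, and the standard bilinear form $(\cdot \mid \cdot)$. The upshot is an explicit closed-form expression $c(\alpha)$ in the coordinates of $\alpha = s\alpha_1 + t\alpha_2$.

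Finally I would compare $c(\alpha)$ with the threshold: the aim is to show that $c(\alpha)$ stays strictly inside the complementary-series range for every $\alpha \in L$ except those on the explicit list of \cref{AlmostAllsl2ModulesAreComplementary_IEqualsJ} and \cref{AlmostAllsl2ModulesAreComplementary_INotEqualsJ}, where it instead lies in the principal-series range. The main difficulty I expect is the Casimir computation itself: because $X$ and $Y$ from \cite{tsu} are long sums of rescaled root vectors rather than single Chevalley generators, $\Omega \cdot x_\alpha$ produces many cross terms whose survival is controlled by root-string conditions that depend delicately on the position of $\alpha$ in $L$. Once a closed form for $c(\alpha)$ is in hand, the classification reduces to a finite arithmetic check, and the stated exceptions should emerge directly from the equation defining the boundary between the two series.
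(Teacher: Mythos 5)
Your overall strategy matches the paper's: both reduce the dichotomy to the eigenvalue $\mu$ of the Casimir element on $M$ together with a threshold (\cref{principal} in the paper: $8\mu \leq -1$ gives a unitary principal series representation, $8\mu > -1$ gives a complementary series representation), and both compute $\mu$ by applying the Casimir to the real root vector $E \in \lig_{s\alpha_0 + t\alpha_1}$ that generates $M$. Two remarks on the computation itself. First, the cross terms you worry about are tamer than you fear: $X$ has length exactly two, and the hyperbola geometry (\cref{oneOfDistantPointsIsNotRoot_IEqualsJ}) forces one of the two summands of $[Y,E]$ to vanish, which yields the closed form $8\mu = \lambda^2 - 2\lambda + 4k_0$ with $\lambda = (s\alpha_0 + t\alpha_1)(H)$ and $k_0$ explicit in the Fibonacci-type numbers $F_k$. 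Second, a pure Casimir threshold is not quite enough in the range $8\mu > -1$: to conclude ``complementary series'' from the Howe--Tan classification you must also verify the interval condition on the parameters $\nu^{\pm}$, which the paper does using $0 < \lambda < 1$ and $k_0 < 0$.

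The genuine gap is your closing claim that ``the classification reduces to a finite arithmetic check.'' It does not do so automatically. The real roots in $L$ are parametrized by an integer $n$ ranging over an infinite set, the subalgebra $\lis$ by an integer $i$ (with $i = j$ or $i = j \pm 1$), and the algebra itself by $a \geq 3$; the quantity $8\mu$ depends on all three, so a priori infinitely many triples $(a, i, n)$ must be tested against the threshold. The bulk of the paper's work is precisely to make this check finite: it proves that $8\mu$ is monotonically decreasing in $n$ (\cref{MonotonicalForN_IEqualsJ}, \cref{MonotonicalForN_INotEqualsJ}), monotonically increasing in $i$ along the extremal cases $n = i - 1$ resp.\ $n = i$ (\cref{MonotonicalForI_IEqualsJ}, \cref{MonotonicalForI_INotEqualsJ}, obtained by interpolating $F_k$ by $(\beta^t - \alpha^t)/(\beta - \alpha)$ and differentiating in a real variable $t$), and monotonically increasing in $a$ at the base point (\cref{MonotonicalForA_IEqualsJ}, \cref{MonotonicalForA_INotEqualsJ}). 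Only after these three monotonicity statements does the problem collapse to evaluating $8\mu$ at finitely many boundary triples, producing the lists of $5$ and $23$ exceptions in \cref{AlmostAllsl2ModulesAreComplementary_IEqualsJ} and \cref{AlmostAllsl2ModulesAreComplementary_INotEqualsJ}. Without an argument of this kind your plan cannot produce the finite exceptional lists, so you need to supply one.
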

\section{General theory of Kac-Moody Lie algebras}
Let $\lig$ be a symmetrizable Kac-Moody Lie algebra on $\bbC$.
Let $A$ be the Cartan matrix of $\lig$ and let $A$ be an $n \times n$ matrix.
Let $\lih$ be a Cartan subalgebra of $\lig$.
Let $e_i, f_i, h_i ,\; (i = 0, \ldots, n - 1)$ be the Chevalley generators of $\lig$.
Let $\lih_\bbR$ be the $\bbR$-span of $h_i$'s.

From \cite[Theorem 2.2]{kac}, $\lig$ has a $\bbC$-valued nondegenerate invariant symmetric
bilinear form $(\cdot \mid \cdot)$ called the standard form.

An antilinear automorphism $\omega_0$ of $\lig$, called compact involution,
is determined by
\begin{align*}
  \omega_0(e_i) &= -f_i,\\
  \omega_0(f_i) &= - e_i \hsp (i = 0, \ldots, n - 1),\\
  \omega_0(h) &= -h \hsp (h \in \lih_\bbR).
\end{align*}
From \cite[\S 2.7]{kac}, we can determine a nondegenerate Hermitian form
$(\cdot \mid \cdot)_0$ on $\lig$ with $(x \mid y)_0 = - (\omega_0 (x) \mid y)$.

Write $\lin^+$ for a subalgebra of $\lig$ generated by $e_i$'s and
$\lin^-$ for a subalgebra of $\lig$ generated by $f_i$'s.

We can construct a 3-dimensional subalgebra of $\lig$ which is spanned by three non-zero elements
$J^+ \in \lin^+ ,\; J^- \in \lin^- ,\; J_3 \in \lih$.
$J^+, J^-$ and $J_3$ satisfy
\begin{align*}
  [J_3, J^\pm] &= \pm J^\pm,\\
  [J^+, J^-] &= -J_3.
\end{align*}
This subalgebra is called $SO (1, 2)$ subalgebra of $\lig$.

A representation of $SO(1, 2)$ subalgebra is called unitary if the representation space $V$
has a Hermitian scalar product $(\cdot, \cdot)$ and the following two conditions are satisfied.
\begin{newEnum}
  \item \label{unitary1} The actions of $J^+$ and $J^-$ are adjoint each other, and the action of $J_3$ is
  self-adjoint.
  That is, for $x, y \in V$, we have
    \begin{align*}
      ([J^+, x], y) &= (x, [J^-, y]),\\
      ([J_3, x], y) &= (x, [J_3, y]).
    \end{align*}
  \item Hermitian scalar product $(\cdot, \cdot)$ is positive definite.
\end{newEnum}
When considering the adjoint action of an $SO (1, 2)$ subalgebra of $\lig$ to $\lig$,
from \cite[Lemma 3.1, Lemma 3.2]{tsu},
we can see that the adjoint action satisfying the condition \cref{unitary1} to be unitary
and $J^- = - \omega_0 (J^+)$ are equivalent.
In \cite{no}, principal $SO (1, 2)$ subalgebras for hyperbolic Kac-Moody Lie algebras
are studied. Principal $SO (1, 2)$ subalgebra satisfies that $J^- = - \omega_0 (J^+)$.

When three non-zero elements $X \in \lin^+, Y \in \lin^-, H \in \lih$ of $\lig$
satisfy
\begin{align*}
  [H, X] &= 2X,\\
  [H, Y] &= -2Y,\\
  [X, Y] &= H,
\end{align*}
these three elements are called $\lisl_2$-triple of $\lig$. A $\lig$-subalgebra
that these elements span is called $\lisl_2$ subalgebra.
The $SO(1, 2)$ subalgebras and the $\lisl_2$ subalgebras can be converted by
\begin{align*}
  J^+ &= \frac{1}{\sqrt{2}} X,\\
  J^- &= - \frac{1}{\sqrt{2}} Y,\\
  J_3 &= \frac{1}{2} H.
\end{align*}
The condition $J^- = - \omega_0 (J^+)$ in $SO (1, 2)$ subalgebra is converted to
$Y = \omega_0 (X)$ in $\lisl_2$ subalgebra.
In the following paper, we consider $\lisl_2$ subalgebra
that satisfies $Y = \omega_0 (X)$.
\section{$\lisl_2$-triples of rank 2 hyperbolic symmetric Lie algebra that is compatible to compact involution}
Let $a$ be an integer that satisfies $a \geq 3$, and let $\lig$ be
a hyperbolic Kac-Moody Lie algebra on $\bbC$ such that the Cartan matrix of $\lig$ is
\begin{align*}
  \begin{pmatrix}
    2 & -a\\
    -a & 2
  \end{pmatrix}.
\end{align*}
Let $\alpha_0, \alpha_1$ be the simple roots of $\lig$.
Let $\{ F_n \}$ be the sequence of numbers determined by
$F_0 = 0, F_1 = 1, F_{k + 2} = a F_{k + 1} - F_k$.

\begin{lem}[{\cite[Proposition 4.4]{km}}]
  \label{posroot}

  The real positive roots of $\lig$ are of the form
  \begin{align*}
    \alpha = F_{k + 1} \alpha_0 + F_k \alpha_1
  \end{align*}
  or
  \begin{align*}
    \beta = F_k \alpha_0 + F_{k + 1} \alpha_1.
  \end{align*}
\end{lem}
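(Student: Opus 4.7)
The plan is to realize all positive real roots as the Weyl orbit of the simple roots, and then explicitly compute that orbit in the rank-2 setting. Since the Weyl group $W$ of a Kac-Moody algebra acts on the root lattice by simple reflections, and since every real root is $W$-equivalent to some simple root, it suffices to compute the orbits of $\alpha_0$ and $\alpha_1$ under $W = \langle s_0, s_1 \rangle$ and then pick out the ones with non-negative coefficients.

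First I would write down the action of the simple reflections: using $\langle \alpha_0, \alpha_1^\vee \rangle = \langle \alpha_1, \alpha_0^\vee \rangle = -a$, one obtains
\begin{align*}
  s_0(\alpha_0) &= -\alpha_0, & s_0(\alpha_1) &= \alpha_1 + a \alpha_0,\\
  s_1(\alpha_0) &= \alpha_0 + a \alpha_1, & s_1(\alpha_1) &= -\alpha_1.
\end{align*}
Since $W$ is the infinite dihedral group (the Coxeter exponent is $\infty$ for $a \geq 3$), every element has a unique reduced expression of the form $\cdots s_0 s_1 s_0$ or $\cdots s_1 s_0 s_1$. Consequently, the orbit of $\alpha_0$ splits into two families obtained by iterating the alternating products; similarly for $\alpha_1$.

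Next I would set $v_k = (s_0 s_1)^{\lfloor k/2 \rfloor} \cdots \alpha_0$ (alternately $s_1 s_0 \cdots \alpha_0$) to obtain the sequences explicitly, and prove by induction on $k$ that
\begin{align*}
  v_k = F_{k+1}\alpha_0 + F_k \alpha_1, \qquad w_k = F_k \alpha_0 + F_{k+1}\alpha_1.
\end{align*}
The base cases $v_0 = \alpha_0, w_0 = \alpha_1$ and $v_1 = \alpha_1 + a\alpha_0, w_1 = \alpha_0 + a\alpha_1$ are immediate from the reflection formulas, and the inductive step uses the recurrence $F_{k+2} = a F_{k+1} - F_k$, which falls out directly from applying $s_0$ or $s_1$ to the preceding vector. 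A straightforward induction also shows $F_k$ is strictly increasing and positive for $k \geq 1$, hence $v_k, w_k$ are indeed positive roots.

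Finally I would argue exhaustiveness: any positive real root is $w \alpha_i$ for some $w \in W$ and $i \in \{0,1\}$, and reducing $w$ to an alternating word shows that $w \alpha_i$ lies in one of the two lists above, perhaps with a sign that must then be positive by assumption. The only step that requires care is bookkeeping the dihedral structure of $W$ and ensuring that the alternation of $s_0$ and $s_1$ at each stage is the one that keeps the output positive; this is the main (though modest) obstacle, and it is handled by noting that applying the \emph{same} reflection twice returns the previous vector, so only the alternating products produce new roots. Since the recursion and the dihedral combinatorics coincide, this completes the identification of all positive real roots with the two families in the statement.
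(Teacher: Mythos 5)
The paper does not prove this lemma at all: it is imported verbatim from Kang--Melville \cite[Proposition 4.4]{km}, so there is no in-paper argument to compare against. Your proof is the standard one and is essentially correct: real roots are by definition the Weyl orbit of the simple roots, the Weyl group here is infinite dihedral so every element is an alternating word in $s_0, s_1$, and the reflection formulas $s_0(x\alpha_0+y\alpha_1)=(ay-x)\alpha_0+y\alpha_1$, $s_1(x\alpha_0+y\alpha_1)=x\alpha_0+(ax-y)\alpha_1$ reproduce the recurrence $F_{k+2}=aF_{k+1}-F_k$, giving both families by induction; positivity of the $F_k$ and exhaustiveness via reduced words are handled as you say. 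One small bookkeeping slip: your sequence $v_k=F_{k+1}\alpha_0+F_k\alpha_1$ is \emph{not} the orbit of $\alpha_0$ alone --- e.g.\ $v_1=a\alpha_0+\alpha_1=s_0(\alpha_1)$ lies in the orbit of $\alpha_1$. The orbit of $\alpha_0$ under alternating words is $(F_1,F_0),(F_1,F_2),(F_3,F_2),(F_3,F_4),\dots$, i.e.\ it zigzags between the two families, and the orbit of $\alpha_1$ supplies the complementary parities; the union of the two orbits is exactly the two families in the statement. This does not affect the validity of the induction (each step is still a single simple reflection applied to the previous vector), but the description ``$v_k=(s_0s_1)^{\lfloor k/2\rfloor}\cdots\alpha_0$'' should be corrected accordingly.
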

We distinguish these roots as type $\alpha$ and type $\beta$,
and we also distinguish root vectors belonging to each root as type $\alpha$
and type $\beta$ (cf. \cite[\S 4]{tsu}).

Let $X$ be an element of the space which real positive root vectors span.
Then $X$ can be written as
\begin{align*}
  X = \sum_k c_k E_k, \hsp (k \in \{ 0, \ldots, n_X - 1 \}, c_k \in \bbC, c_k \neq 0, E_k \in \lig_{\beta_k}, E_k \neq 0)
\end{align*}
where $\beta_k \; (k \in \{ 0, \ldots, n_X - 1 \})$ are distinct real roots and
$n_X$ is a positive integer.

We call this $n_X$ the length of $X$. Then the following holds.
\begin{lem}[{\cite[Theorem 5.8]{tsu}}]
  \label{LengthOfXIs2}
  Let $X$ be an element in the space which real positive root vectors span.
  \begin{newEnum}
    \item When the length of $X$ is 1 or more than 3,
    $X ,\; Y = \omega_0(X) ,\; H = [X, Y]$ do not form $\lisl_2$-triple.
    \item \label{length2} Suppose the length of $X$ is 2 and
    $E_0, E_1$ are real positive root vectors of different types
    (in the sense of $\alpha$-type and $\beta$-type).
    Then, taking the appropriate $c_0, c_1 \in \bbC$,
    $X = c_0 E_0 + c_1 E_1 ,\; Y = \omega_0(X)$, and $H = [X, Y]$ form $\lisl_2$-triple.
    In particular, $c_0, c_1$ can be chosen so that $c_0, c_1 \in \bbR$.
  \end{newEnum}
\end{lem}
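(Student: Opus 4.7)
The plan is to expand $H := [X, \omega_0(X)]$ directly in terms of the components of $X$ and examine when the bracket relations of an $\lisl_2$-triple can hold, splitting the argument by the length $n_X$.

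For the length-$1$ part of (1), write $X = c\,E$ with $0 \neq E \in \lig_\beta$ for a positive real root $\beta$, and set $F := -\omega_0(E) \in \lig_{-\beta}$. Since $\beta$ is real, $\lig_\beta$ is one-dimensional and $[E, F] = (E \mid F)\,\nu^{-1}(\beta)$, where $(E \mid F) = (E \mid E)_0 > 0$ by positivity of the compact Hermitian form on real root spaces, and $\beta(\nu^{-1}(\beta)) = (\beta \mid \beta) > 0$. Hence $\beta([E, F]) > 0$, while $H = -|c|^2\,[E, F]$, so $\beta(H) < 0$. This contradicts $\beta(H) = 2$ required by $[H, X] = 2X$, ruling out a length-$1$ triple.

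For (2), write $X = c_0 E_0 + c_1 E_1$ with $\beta_0 = F_{k+1}\alpha_0 + F_k \alpha_1$ (type $\alpha$) and $\beta_1 = F_\ell \alpha_0 + F_{\ell+1}\alpha_1$ (type $\beta$). The key claim is that $\beta_0 - \beta_1$ is never a root: when $k \leq \ell + 1$ its two coordinates have opposite signs or it is a non-root integer multiple of $\alpha_0$, while when $k \geq \ell + 2$ both coordinates are positive but a short calculation using the Fibonacci-like identity $F_{m+1}F_{m-1} - F_m^2 = -1$ shows $(\beta_0 - \beta_1 \mid \beta_0 - \beta_1) > 0$ and that its coordinate pattern does not match any $(F_{m+1}, F_m)$ or $(F_m, F_{m+1})$. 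Symmetrically $\beta_1 - \beta_0$ is not a root, so $[E_0, \omega_0(E_1)] = [E_1, \omega_0(E_0)] = 0$ and
\begin{align*}
  H \;=\; -|c_0|^2\,[E_0, F_0] \;-\; |c_1|^2\,[E_1, F_1] \;\in\; \lih_\bbR.
\end{align*}
The conditions $\beta_0(H) = \beta_1(H) = 2$ become a $2 \times 2$ linear system in $(|c_0|^2, |c_1|^2)$ whose determinant is proportional to $(\beta_0\mid\beta_0)(\beta_1\mid\beta_1) - (\beta_0\mid\beta_1)^2 = 4 - (\beta_0\mid\beta_1)^2$; since $|(\beta_0\mid\beta_1)| \geq a > 2$ for any pair of different-type real roots (quickly checked from the explicit formula, or by reducing via the Weyl group to the simple-root pair), the determinant is negative and Cramer's rule produces strictly positive $|c_0|^2, |c_1|^2$. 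Absorbing the phases of $c_0, c_1$ into the choice of $E_0, E_1$ then yields real coefficients.

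For the length-$\geq 4$ part of (1), the plan is to track the support of $[H, X]$ relative to $\{\beta_0, \ldots, \beta_{n_X - 1}\}$. Expanding $H$ produces a Cartan piece plus components in $\lig_{\beta_i - \beta_j}$ for each pair with $\beta_i - \beta_j$ a root; bracketing with $X$ again yields contributions at weights $\beta_i + \beta_j - \beta_k$, and $[H, X] = 2X$ forces these weights either to lie in the support of $X$ with matching coefficients or to cancel in pairs. Enumerating via \cref{posroot} which $\beta_i - \beta_j$ are roots (exploiting the Fibonacci-indexed structure of the two real-root families), one shows that for $n_X \geq 4$ the induced system is inconsistent. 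The main obstacle is precisely this combinatorial bookkeeping---which root differences contribute, and whether the shifted weights produced can all simultaneously cancel---but the rigidity of the rank 2 symmetric root system reduces it to a finite case check.
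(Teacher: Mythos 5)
First, a point of comparison: the paper itself gives no proof of this lemma --- it is imported verbatim from \cite[Theorem 5.8]{tsu} --- so there is no in-paper argument to measure yours against, and your attempt has to stand on its own. The length-$1$ case and part (2) are essentially sound. For length $1$, the computation $\beta(H) = -|c|^2 (E \mid E)_0 (\beta \mid \beta) < 0$ is correct and contradicts $\beta(H) = 2$. For length $2$ with different types, reducing to the non-rootness of $\beta_0 - \beta_1$ and then to a $2 \times 2$ system is the right skeleton, but two details need repair: (i) your sign analysis of $\beta_0 - \beta_1$ only covers $k \in \{\ell - 1, \ell, \ell + 1\}$; for $k \le \ell - 2$ both coordinates are negative rather than of opposite sign, so the quadratic-form estimate must be run there too (and note that $(\beta_0 - \beta_1 \mid \beta_0 - \beta_1) > 0$ alone does not exclude a \emph{real} root --- you need $f(\beta_0 - \beta_1) > 1$); (ii) positivity of the Cramer solution requires $(\beta_0 \mid \beta_1) < -2$, not merely $|(\beta_0 \mid \beta_1)| > 2$; a large positive pairing would make the system unsolvable in positive numbers. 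Both points follow from the single inequality $f(\beta_0 - \beta_1) = 2 - (\beta_0 \mid \beta_1) \ge 2 + a$, which you should prove once and cite for both purposes.

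The genuine gap is the length $\ge 3$ (in your reading, $\ge 4$) part of (1), which is the hardest portion of the cited theorem and which you only outline. The claim that ``the rigidity of the rank 2 symmetric root system reduces it to a finite case check'' is unjustified: the support $\{\beta_0, \ldots, \beta_{n_X - 1}\}$ ranges over arbitrary finite subsets of an infinite family of real roots, and for same-type pairs the difference $\beta_i - \beta_j$ frequently \emph{is} a root (for $a = 3$, $(3,1) - (1,0) = (2,1)$ with $f(2,1) = -1$, an imaginary root), so the cross terms in $[X, \omega_0(X)]$ do not simply vanish; one must control actual cancellations among the weights $\beta_i + \beta_j - \beta_k$, and first even establish $H \in \lih$. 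Without a uniform structural argument or an explicit induction on the Fibonacci-type indices, this step is a plan, not a proof. Finally, be explicit about which lengths your argument excludes: as written you treat $1$ and $\ge 4$ and leave length exactly $3$ untouched, so you should pin down the intended reading of ``more than 3'' and cover it.
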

\begin{lem}[{\cite[Theorem 6.4]{tsu}}]
  \label{dominantLem}
  Take $\lra{H, X, Y}$ in \cref{LengthOfXIs2}, \cref{length2}.
  Let $X = c_0 E_0 + c_1 E_1$, where $E_0$ is type $\alpha$ and
  $E_1$ is type $\beta$.
  From \cref{posroot}, using integers $i, j \geq 0$, we can write
  $E_0 \in \lig_{F_{i + 1} \alpha_0 + F_i \alpha_1} ,\; E_1 \in \lig_{F_{j} \alpha_0 + F_{j + 1} \alpha_1}$.
  If and only if $i = j - 1, j, j + 1$, $H$ is dominant.
\end{lem}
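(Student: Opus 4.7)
The strategy is to identify $H$ as a negative multiple of $\nu^{-1}(\alpha+\beta)$, where $\nu\colon\lih\to\lih^*$ is induced by the standard form and $\alpha,\beta$ are the roots of $E_0, E_1$, and then convert dominance into a monotonicity statement about $\{F_k\}$.

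First I would write $H=[X,Y]$ explicitly. Since $H\in\lih$ while the cross brackets $[E_0,\omega_0(E_1)]$ and $[E_1,\omega_0(E_0)]$ lie in the root spaces $\lig_{\pm(\alpha-\beta)}$, only the diagonal terms contribute. Using $[E_\gamma,\omega_0(E_\gamma)] = -(E_\gamma\mid E_\gamma)_0\,\nu^{-1}(\gamma)$ for a real root vector, I obtain
\[
H = -|c_0|^2 (E_0\mid E_0)_0\,\nu^{-1}(\alpha) \;-\; |c_1|^2 (E_1\mid E_1)_0\,\nu^{-1}(\beta).
\]
The relations $\alpha(H)=\beta(H)=2$ (forced by $[H,X]=2X$) give two linear equations in the two positive coefficients $A_0=|c_0|^2 (E_0\mid E_0)_0$ and $A_1=|c_1|^2 (E_1\mid E_1)_0$. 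Since $(\alpha\mid\beta)\neq 2$ for $a\geq 3$ (otherwise $\alpha-\beta$ would be isotropic, which cannot occur in the hyperbolic case), the unique solution forces $A_0=A_1=:A>0$, yielding $H=-A\,\nu^{-1}(\alpha+\beta)$.

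Because $A>0$, dominance of $H$ is equivalent to $(\alpha_0\mid\alpha+\beta)\leq 0$ and $(\alpha_1\mid\alpha+\beta)\leq 0$. Expanding with $(\alpha_s\mid\alpha_s)=2$, $(\alpha_0\mid\alpha_1)=-a$, and applying the recursion $aF_k=F_{k+1}+F_{k-1}$ (extended by $F_{-1}=-1$), these simplify to
\begin{align*}
F_{i+1}-F_{i-1} &\leq F_{j+2}-F_j, \\
F_{j+1}-F_{j-1} &\leq F_{i+2}-F_i.
\end{align*}
Setting $g_k:=F_{k+1}-F_{k-1}$, the two conditions become $g_i\leq g_{j+1}$ and $g_j\leq g_{i+1}$. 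A short induction using $a\geq 3$ and the strict monotonicity of $\{F_k\}_{k\geq 0}$ shows that $\{g_k\}_{k\geq 0}$ is strictly increasing, so the conditions collapse to $i\leq j+1$ and $j\leq i+1$, i.e., $i\in\{j-1,j,j+1\}$.

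The main nontrivial point is the first step: confirming that the off-diagonal cross terms drop out of $H$ (which follows from $H\in\lih$ and $\alpha\neq\beta$) and carrying the sign and normalization conventions from $\omega_0$ and the Hermitian form through correctly. Once the formula $H=-A\,\nu^{-1}(\alpha+\beta)$ is in hand, the rest is routine arithmetic on the Fibonacci-like sequence.
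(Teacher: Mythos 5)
The paper does not actually prove this lemma; it is imported verbatim from \cite[Theorem 6.4]{tsu}, so there is no internal argument to compare yours against. On its own merits your proof checks out. The key reduction $H=-A\,\nu^{-1}(\alpha+\beta)$ with a single constant $A>0$ is legitimate: the cross terms vanish because $H\in\lih$ while they would live in $\lig_{\pm(\alpha-\beta)}$ with $\alpha\neq\beta$; the identity $[E_\gamma,\omega_0(E_\gamma)]=-(E_\gamma\mid E_\gamma)_0\,\nu^{-1}(\gamma)$ follows from \cite[Theorem 2.2]{kac} together with $(x\mid y)_0=-(\omega_0(x)\mid y)$, and positivity of $(E_\gamma\mid E_\gamma)_0$ on $\lin^+$ is \cite[Theorem 11.7]{kac}; and $(\alpha\mid\beta)=2$ is excluded because $x^2-axy+y^2=0$ has no nonzero integer solutions for $a\geq 3$ and $\alpha\neq\beta$. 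The arithmetic also verifies: with $F_{-1}=-1$ one gets $(\alpha_0\mid\alpha+\beta)=g_i-g_{j+1}$ and $(\alpha_1\mid\alpha+\beta)=g_j-g_{i+1}$ for $g_k=F_{k+1}-F_{k-1}$, and $g_{k+1}-g_k=(a-1)F_{k+1}-2F_k+F_{k-1}>0$ makes $\{g_k\}_{k\geq 0}$ strictly increasing, so the conditions collapse to $|i-j|\leq 1$ as claimed (with equality $\alpha_s(H)=0$ occurring precisely at $i=j\pm 1$, which is why dominance must be read in the non-strict sense). This is a complete and correct substitute for the missing external proof.
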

\section{Irreducible decomposition of $\lig$ as an $\lisl_2$ module}
In this section, we consider an $\lisl_2$-subalgebra $\lis = \lra{H, X, Y}$ of $\lig$,
which satisfies the following conditions.
\begin{newEnum}
  \item $H \in \lih$ and $H$ is dominant.
  \item $X$ is in the space which is spanned
  by positive root vectors.
  \item $Y = \omega_0 (X)$.
\end{newEnum}
We show that $\lig$ is decomposed to irreducible modules by the action of $\lis$.

$\lis$-module $V \subset \lig$ is called unitarizable if following conditions are satisfied.
\begin{newEnum}
  \item $(\cdot \mid \cdot)_0$ on $V$ is positive definite.
  \item \label{skewAdjoint} For $v_1, v_2 \in V$ and $s \in \lis$, the following condition is satisfied.
  \begin{align*}
    ([s, v_1], v_2)_0 = - (v_1, [\omega_0 (s), v_2])_0.
  \end{align*}
\end{newEnum}
From \cite[\S 2.7]{kac}, the condition \cref{skewAdjoint} are automatically satisfied.
Therefore, $(\cdot \mid \cdot)_0$ is positive definite on $V$ if and only if $V$ is unitarizable.

First, we put
\begin{align*}
  U = \{ x \in \lig \mid \forall y \in \lis \; (x \mid y)_0 = 0 \}.
\end{align*}
$U$ is closed under the action of $\lis$, and $\lig = \lis \oplus U$.
\begin{lem}
  \label{positiveDefinite}
  $(\cdot \mid \cdot)_0$ is positive definite on $U$.
\end{lem}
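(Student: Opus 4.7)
The plan is to use the orthogonality of the root space decomposition under $(\cdot\mid\cdot)_0$ and reduce positivity on $U$ to a block-by-block check. Since $\omega_0(\lig_\alpha) = \lig_{-\alpha}$ and the invariant bilinear form pairs $\lig_\alpha$ only with $\lig_{-\alpha}$, the root space decomposition $\lig = \lih \oplus \bigoplus_\alpha \lig_\alpha$ is orthogonal for $(\cdot\mid\cdot)_0$. Using \cref{LengthOfXIs2}, \cref{length2} I would write $X = c_0 E_0 + c_1 E_1$ with $E_i \in \lig_{\beta_i}$ for distinct real positive roots $\beta_0, \beta_1$. Then $\lis$ meets each of the pairwise orthogonal subspaces $\lih$, $V_+ := \lig_{\beta_0} \oplus \lig_{\beta_1}$, and $V_- := \lig_{-\beta_0} \oplus \lig_{-\beta_1}$ in a single line, namely $\bbC H$, $\bbC X$, $\bbC Y$ respectively, so
\begin{align*}
U = (\bbC H)^{\perp_\lih} \oplus (\bbC X)^{\perp_{V_+}} \oplus (\bbC Y)^{\perp_{V_-}} \oplus \bigoplus_{\alpha \notin \{\pm\beta_0, \pm\beta_1\}} \lig_\alpha.
\end{align*}

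On every root space $\lig_\alpha$ with $\alpha \neq 0$ the Hermitian form $(\cdot\mid\cdot)_0$ is positive definite (a direct computation from $\omega_0(e_i) = -f_i$ on real root spaces, and the general positivity result in \cite[\S 11]{kac} on imaginary root spaces), so the last summand is positive definite. The invariance identity $(H\mid H) = ([X, Y]\mid H) = (X\mid [Y, H]) = 2(X\mid Y)$ together with $Y = \omega_0(X)$ gives $(X\mid X)_0 = -(\omega_0(X)\mid X) = -(X\mid Y) = -(H\mid H)/2$, and similarly $(Y\mid Y)_0 = -(H\mid H)/2$. Hence the line $\bbC X$ sits inside the positive definite space $V_+$ with squared norm $-(H\mid H)/2$, and the positivity of the two $V_\pm$-quotients together with that of $(\bbC H)^{\perp_\lih}$ all reduce to establishing $(H\mid H) < 0$.

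To verify this, I would argue as follows. On $\lih_\bbR$ the bilinear form has Gram matrix $\bigl(\begin{smallmatrix} 2 & -a\\ -a & 2\end{smallmatrix}\bigr)$ with eigenvalues $2 \pm a$, hence signature $(1, 1)$ for $a \geq 3$. Writing $H = c_0 h_0 + c_1 h_1$, the dominance conditions $2c_0 - ac_1 \geq 0$ and $-ac_0 + 2c_1 \geq 0$ together with $a \geq 3$ force $c_0, c_1 < 0$ and the ratio $r := c_0/c_1$ to lie in $[2/a, a/2]$. Substituting into $(H\mid H) = 2c_1^2(r^2 - ar + 1)$ and observing that the roots $(a \pm \sqrt{a^2 - 4})/2$ of $r^2 - ar + 1$ strictly enclose $[2/a, a/2]$ when $a \geq 3$ yields $(H\mid H) < 0$. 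Consequently $\bbC H$ is a negative definite line in the signature-$(1, 1)$ Hermitian space $\lih$, so $(\bbC H)^{\perp_\lih}$ is one-dimensional positive definite, and $-(H\mid H)/2 > 0$ then makes $(\bbC X)^{\perp_{V_+}}$ and $(\bbC Y)^{\perp_{V_-}}$ positive definite as well.

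The main obstacle is precisely this signature computation on $\lih$: every other block of $U$ is positive definite either automatically (root spaces) or as an immediate consequence of $(H\mid H) < 0$. The indefiniteness of $(\cdot\mid\cdot)_0$ on the Cartan subalgebra is the sole reason the lemma is nontrivial, and the dominance of $H$ is precisely what ensures that the unique negative direction of $\lih$ is absorbed into $\lis$ rather than leaking into $U$.
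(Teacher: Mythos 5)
Your proof is correct, and it shares the paper's overall skeleton --- orthogonality of the root space decomposition for $(\cdot \mid \cdot)_0$, positivity on $\lin^+ \oplus \lin^-$ from \cite[Theorem 11.7]{kac}, and the reduction of everything to the single negative direction inside $\lih$ --- but it diverges at the one step that carries the content. The paper obtains the negativity of $\bbC H$ abstractly: the form has signature $(1,1)$ on $\lih$, and since $\lis$ is not unitarizable while $\bbC X \oplus \bbC Y$ sits inside the positive definite space $\lin^+ \oplus \lin^-$, the non-positive direction of $\lis$ must be $\bbC H$ (strictness follows from nondegeneracy via $(H \mid H) = 2(X \mid Y) = -2(X \mid X)_0 \neq 0$, which is the same identity you use in the other direction). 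You instead compute $(H \mid H)$ explicitly from the Gram matrix $\bigl(\begin{smallmatrix} 2 & -a \\ -a & 2 \end{smallmatrix}\bigr)$ and show that dominance of $H$ forces the ratio of its coordinates into $[2/a, a/2]$, which lies strictly between the two roots of $r^2 - ar + 1$, so $(H \mid H) < 0$. Both arguments are valid; the paper's is shorter but outsources the non-unitarizability of $\lis$ (and its line ``$\lih = \lis \oplus \lih'$'' should really read $\lih = \bbC H \oplus \lih'$), whereas yours is self-contained and makes visible exactly where the hypothesis that $H$ is dominant enters. A minor remark: the positivity of $(\bbC X)^{\perp_{V_+}}$ and $(\bbC Y)^{\perp_{V_-}}$ does not actually need $(H \mid H) < 0$, since any subspace of the positive definite space $\lin^+ \oplus \lin^-$ is automatically positive definite; the signature argument is only needed for the $\lih$-block.
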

\begin{proof}
  From \cite[Theorem 11.7]{kac}, $(\cdot \mid \cdot)_0$
  is positive definite on $\lin^+ \oplus \lin^-$.
  The sign of $(\cdot \mid \cdot)_0$ on $\lih$ is $(1, 1)$.
  Let $\lih_\lis$ be the space that $H$ spans.
  Since $\lis$ itself is not unitarizable,
  when we write $\lih = \lih_\lis \oplus \lih'$,
  $(\cdot \mid \cdot)_0$ is not positive definite on $\lis$.
  Therefore, $(\cdot \mid \cdot)_0$ is positive definite on $\lih'$.
  Since $U = \lih' \oplus \lin^+ \oplus \lin^-$,
  $(\cdot \mid \cdot)_0$ is positive definite on $U$.
\end{proof}
\begin{lem}
  \label{directDecomp}
  Consider a subspace $V$ of $U$ that is closed
  under the action of $H$.
  Let $V^\perp$ be the subspace of $U$
  orthogonal to $V$ with respect to the
  Hermitian form $(\cdot \mid \cdot)_0$.
  Then $U = V \oplus V^\perp$.
\end{lem}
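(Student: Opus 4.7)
The plan is to reduce the orthogonal splitting to a finite-dimensional statement inside each $\ad(H)$-eigenspace. Since $\omega_0(H) = -H$ and condition \cref{skewAdjoint} is automatic, one has $([H,x] \mid y)_0 = (x \mid [H,y])_0$, so $\ad(H)$ is self-adjoint for $(\cdot \mid \cdot)_0$, and eigenspaces for distinct (necessarily real) eigenvalues are mutually orthogonal. The root space decomposition of $\lig$ expresses every element of $U$ as a finite sum of $\ad(H)$-eigenvectors, so $U = \bigoplus_\lambda U_\lambda$ algebraically. Because $V$ is closed under $H$, a Vandermonde argument applied to $v, Hv, H^2 v, \ldots$ for any $v \in V$ shows each $\ad(H)$-component of $v$ again lies in $V$, and hence $V = \bigoplus_\lambda V_\lambda$ with $V_\lambda = V \cap U_\lambda$.

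The key step is to show that each $U_\lambda$ is finite-dimensional. Every root of $\lig$ has the form $\pm(s\alpha_0 + t\alpha_1)$ with $s, t \in \bbZ_{\geq 0}$, and dominance of $H$ gives $\alpha_0(H), \alpha_1(H) > 0$; hence the equation $s\alpha_0(H) + t\alpha_1(H) = |\lambda|$ admits only finitely many non-negative integer solutions. Combined with the finite-dimensionality of each individual root space, this forces the $\ad(H)$-eigenspace of $\lig$ of eigenvalue $\lambda$, and thus $U_\lambda$, to be finite-dimensional. By \cref{positiveDefinite}, the form $(\cdot \mid \cdot)_0$ restricted to this finite-dimensional $U_\lambda$ is positive definite, so elementary linear algebra produces a complement $W_\lambda \subseteq U_\lambda$ with $U_\lambda = V_\lambda \oplus W_\lambda$ and $W_\lambda \perp V_\lambda$.

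Setting $V^\perp := \bigoplus_\lambda W_\lambda$, I then verify both inclusions identifying this with the orthogonal complement of $V$ in $U$. Any $w \in W_\lambda$ is orthogonal to $V_\lambda$ by construction and to $V_\mu$ for $\mu \neq \lambda$ by the eigenspace orthogonality, so $V^\perp$ lies inside the orthogonal complement. Conversely, if $u = \sum u_\lambda \in U$ is orthogonal to all of $V$, pairing with arbitrary $v_\lambda \in V_\lambda$ yields $(u_\lambda \mid v_\lambda)_0 = 0$, placing each $u_\lambda$ in $W_\lambda$. Together with $V \cap V^\perp = 0$, which is immediate from positive-definiteness, this gives $U = V \oplus V^\perp$. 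The main obstacle I foresee is securing the strict positivity of both $\alpha_0(H)$ and $\alpha_1(H)$ needed in the finite-dimensionality argument; this should be extracted from the explicit form $H = [X,Y]$ supplied by \cref{LengthOfXIs2} and \cref{dominantLem}, rather than from the bare word ``dominant'' alone.
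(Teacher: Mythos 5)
Your argument coincides with the paper's proof of this lemma: decompose $U$ into $\ad (H)$-eigenspaces, which are mutually orthogonal because $H$ is self-adjoint for $(\cdot \mid \cdot)_0$; observe that $V$ inherits this decomposition; split each finite-dimensional $U_\lambda$ as $V_\lambda$ plus its orthogonal complement using the positive definiteness supplied by \cref{positiveDefinite}; and reassemble. The one point that needs repair is exactly the one you flag at the end, but the repair is not the one you propose: strict positivity of both $\alpha_0(H)$ and $\alpha_1(H)$ cannot be extracted from the explicit form of $H$, because it is false when $i = j \pm 1$. For instance, when $i = j - 1$ the two components of $X$ lie in $\lig_{F_{i+1}\alpha_0 + F_i\alpha_1}$ and $\lig_{F_{i+1}\alpha_0 + F_{i+2}\alpha_1}$, and $[H, X] = 2X$ forces $F_{i+1}\alpha_0(H) + F_i\alpha_1(H) = F_{i+1}\alpha_0(H) + F_{i+2}\alpha_1(H) = 2$, hence $\alpha_1(H) = 0$ (symmetrically $\alpha_0(H) = 0$ when $i = j + 1$); the equation $s\,\alpha_0(H) + t\,\alpha_1(H) = \norm{\lambda}$ then has infinitely many solutions in $\bbZ_{\geq 0}^2$, so your counting argument does not bound $\dim U_\lambda$. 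Finite-dimensionality nevertheless holds, and here is where dominance actually enters. Since $\det A = 4 - a^2 \neq 0$ and $H \neq 0$, the pair $(\alpha_0(H), \alpha_1(H))$ is nonzero, so the roots contributing to the eigenvalue $\lambda$ lie on the affine line $\alpha_0(H)x + \alpha_1(H)y = \lambda$; its direction vector $(\alpha_1(H), -\alpha_0(H))$ has coordinates of product $\leq 0$ by dominance, so $f(x,y) = x^2 - axy + y^2$ restricted to that line is a quadratic with positive leading coefficient and the sublevel set $\{ f \leq 1 \}$, which by \cite[Cor 4.3]{km} contains all roots, meets the line in a bounded segment. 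That segment contains finitely many lattice points, each root space is finite-dimensional, and $\lih$ only contributes to $\lambda = 0$, so $U_\lambda$ is finite-dimensional in every case. With that substitution your proof is complete and agrees with the paper's.
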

\begin{proof}
  We consider the eigenspace decomposition of $U$ by $H$.
  Let $U_\lambda$ be the eigenspace for $\lambda$ and write
  \begin{align*}
    U = \bigoplus_{\lambda \in \bbC} U_\lambda.
  \end{align*}
  Since $H$ is a Hermitian operator on $(\cdot \mid \cdot)_0$,
  $U_\lambda$ and $U_\mu$ are orthogonal with respect to this inner product
  if $\lambda \neq \mu$.
  Since $H$ is dominant, $U_\lambda$ is finite-dimensional.
  For each $\lambda$, $V$ also inherits the eigenspace decomposition of $U$.
  Let $V_\lambda$ be an eigenspace of $V$ for $\lambda$, and
  $V$ can be written as a direct sum of $V_\lambda$'s.
  Let
  \begin{align*}
    V'_\lambda = \{ v \in V_\lambda \mid \forall x \in V_\lambda \; (v \mid x)_0 = 0 \},
  \end{align*}
  and
  \begin{align*}
    V' = \bigoplus_{\lambda \in \bbC} V'_\lambda.
  \end{align*}
  $V_\lambda$ is finite dimensional.
  From \cref{positiveDefinite},
  $(\cdot \mid \cdot)_0$ is positive definite on $U$.
  Thus we have $U_\lambda = V_\lambda \oplus V'_\lambda$.
  Therefore, we have $U = V \oplus V'$ and $V' = V^\perp$.
\end{proof}
In the following, we show that $U$ can be decomposed into irreducible modules
by the action of $\lis$.
\begin{lem}
  \label{UhasIrredSubmod}
  Any non-zero $\lisl_2$-submodule $V$ of $U$ includes
  an irreducible submodule.
\end{lem}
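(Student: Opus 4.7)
The plan is to exhibit an explicit irreducible submodule of $V$ by combining a Zorn's lemma argument with the orthogonal-complement mechanism of \cref{directDecomp}. The key observation is that because $(\cdot \mid \cdot)_0$ is positive definite on $U$, every $\lis$-submodule admits an $\lis$-stable complement, so maximality of a submodule avoiding a prescribed vector will translate directly into a minimality statement on its complement.

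First I fix any nonzero vector $v \in V$ and consider the family $\mathcal{S}$ of $\lis$-submodules of $V$ not containing $v$. This family is nonempty (it contains $0$) and closed under ascending unions (if $v \in \bigcup_\alpha W_\alpha$, then $v$ must already lie in some $W_\alpha$), so Zorn's lemma yields a maximal element $W_0 \in \mathcal{S}$. Applying \cref{directDecomp} to the $H$-stable subspace $W_0 \subseteq U$ gives $U = W_0 \oplus W_0^\perp$. Since $\omega_0$ preserves $\lis$ (it swaps $X \leftrightarrow Y$ and sends $H$ to $-H$), the skew-adjoint relation \cref{skewAdjoint} forces $W_0^\perp$ to be $\lis$-invariant. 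Intersecting with $V$ yields $V = W_0 \oplus W_0'$ with $W_0' := W_0^\perp \cap V$ an $\lis$-submodule. Writing $v = w + v'$ with $w \in W_0$ and $v' \in W_0'$, the hypothesis $v \notin W_0$ forces $v' \neq 0$.

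The irreducible submodule will be $M := U(\lis)\, v'$. Given any nonzero $\lis$-submodule $W_1 \subseteq W_0'$, the direct sum $W_0 \oplus W_1$ is an $\lis$-submodule of $V$ strictly larger than $W_0$ (because $W_1 \cap W_0 \subseteq W_0' \cap W_0 = 0$), so by maximality of $W_0$ it must contain $v$; comparing with the unique decomposition $v = w + v'$ relative to $V = W_0 \oplus W_0'$ gives $v' \in W_1$, and hence $M \subseteq W_1$. Applied to an arbitrary nonzero submodule of $M$, this forces equality, proving $M$ is irreducible.

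The only point demanding real care is the $\lis$-invariance of orthogonal complements, which uses that $\omega_0(\lis) = \lis$ together with the adjoint relation \cref{skewAdjoint}; beyond that the argument is a purely formal application of Zorn. Notably, no use is made of finite-dimensionality of the weight spaces, integrality of the eigenvalues of $H$, or any classification of irreducible $\lisl_2$-modules — those considerations will be needed only later, when identifying which irreducible types occur in the actual decomposition of $\lig$.
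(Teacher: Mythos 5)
Your proof is correct, but it takes a genuinely different route from the paper's. The paper argues spectrally: it fixes an $H$-eigenvalue $\lambda$ of $V$, notes that the Casimir element $C$ of $U(\lis)$ preserves the finite-dimensional eigenspace $V_\lambda$ and therefore has an eigenvector $v$ there, and then concludes that the cyclic module $U(\lis)v$ generated by a joint $(H,C)$-eigenvector contains an irreducible submodule (implicitly the Howe--Tan structure theory quoted later in \S 5). You instead run a purely formal socle argument: Zorn's lemma produces a submodule $W_0$ of $V$ maximal among those avoiding a fixed nonzero $v$; \cref{directDecomp} together with the contravariance relation \cref{skewAdjoint} and the fact that $\omega_0(\lis)=\lis$ splits off an $\lis$-stable complement $W_0'$ inside $V$; and maximality of $W_0$ forces the component $v'$ of $v$ in $W_0'$ to lie in every nonzero submodule of $W_0'$, so that $U(\lis)v'$ is minimal, hence irreducible. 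All the individual steps check out ($W_0\oplus W_1$ is a strictly larger submodule, uniqueness of components identifies $v'\in W_1$, and $U(\lis)v'\subseteq W_0'$). Your version avoids the Casimir and any classification of $\lisl_2$-modules, and proves slightly more, namely that $W_0'$ has a unique minimal submodule. One correction to your closing remark, though: finite-dimensionality of the $H$-eigenspaces is not actually avoided --- it is precisely what makes \cref{directDecomp} true, since in infinite dimensions an orthogonal complement with respect to a positive definite form need not be a linear complement; you have merely relocated that input into the quoted lemma rather than dispensed with it.
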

\begin{proof}
  Take the eigenspace decomposition of $U$ by the action of $H$.
  $V$ is also decomposed into eigenspaces
  with this decomposition, and each eigenspace of $V$ is finite-dimensional.
  We regard $H$ as a linear transform on $V$
  and take some eigenvalue $\lambda$ of $H$ on $V$.
  Let $U (\lisl_2)$ be an universal enveloping algebra of $\lisl_2$.
  Considering the Casimir element $C$ of $U (\lisl_2)$,
  it preserves $V_\lambda$.
  Since $V_\lambda$ is finite-dimensional, there exists an eigenvector of $C$.
  Let $v$ denote this. Consider the $\lisl_2$-submodule generated by $v$,
  which includes an irreducible submodule.
\end{proof}
\begin{thm}
  \label{decomposabilityOfU}
  (cf. \cite[Theorem 1.2]{kob})
  $U$ can be decomposed into direct sum of irreducible $\lis$-modules,
  and all of these modules are unitarizable.
\end{thm}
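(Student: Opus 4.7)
The plan is to build the decomposition by Zorn's lemma, using \cref{positiveDefinite}, \cref{directDecomp}, and \cref{UhasIrredSubmod} as black boxes. Consider the collection of families of pairwise-orthogonal irreducible $\lis$-submodules of $U$, partially ordered by inclusion. A chain has an obvious upper bound (take the union), so Zorn supplies a maximal family; let $W \subseteq U$ denote the direct sum of its members (the sum is direct because the summands are pairwise orthogonal and $(\cdot \mid \cdot)_0$ is positive definite on $U$ by \cref{positiveDefinite}). The claim reduces to showing $W = U$: once that is established, each summand sits inside $U$, and so $(\cdot \mid \cdot)_0$ is positive definite on it by \cref{positiveDefinite}, while the skew-adjointness condition holds automatically as noted in the text, so each summand is unitarizable.

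To show $W = U$ I would argue by contradiction, assuming $W \subsetneq U$. First, $W$ is $H$-invariant, because every irreducible $\lis$-submodule of $U$ inherits a weight-space decomposition with respect to $H$ (the eigenspaces $U_\lambda$ of $H$ on $U$ are finite-dimensional since $H$ is dominant, so $H$ acts semisimply on any submodule). Hence \cref{directDecomp} applies, giving $U = W \oplus W^\perp$ with $W^\perp \neq 0$.

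The heart of the argument is that $W^\perp$ is stable under $\lis$. I would establish this in two steps. First, $\lis$ is stable under $\omega_0$: by assumption $Y = \omega_0(X)$, and since $\omega_0$ is an antilinear involutive automorphism, $\omega_0(Y) = X$; moreover
\begin{align*}
  \omega_0(H) = \omega_0([X,Y]) = [\omega_0(X), \omega_0(Y)] = [Y, X] = -H,
\end{align*}
so $\omega_0(\lis) = \lis$. Second, for any $v \in W^\perp$, $s \in \lis$, and $w \in W$, the skew-adjointness relation gives
\begin{align*}
  ([s, v] \mid w)_0 = -(v \mid [\omega_0(s), w])_0 = 0,
\end{align*}
where the last equality holds because $\omega_0(s) \in \lis$ and $W$ is $\lis$-invariant, so $[\omega_0(s), w] \in W$. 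Hence $[s, v] \in W^\perp$, and $W^\perp$ is an $\lis$-submodule. Applying \cref{UhasIrredSubmod} to $W^\perp$, we obtain an irreducible $\lis$-submodule $M \subseteq W^\perp$, orthogonal to every member of our maximal family, contradicting maximality. Thus $W = U$.

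The only step that requires any care is the verification that $W^\perp$ is $\lis$-invariant; once the $\omega_0$-stability of $\lis$ is noted, this is immediate from the skew-adjointness relation. Everything else is a routine packaging of the preceding three lemmas into a Zorn argument, with the unitarizability of each summand following with no extra work from \cref{positiveDefinite}.
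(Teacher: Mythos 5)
Your proof is correct and follows essentially the same route as the paper: a Zorn's-lemma argument on a maximal family of pairwise-orthogonal irreducible submodules, combined with \cref{directDecomp} and \cref{UhasIrredSubmod} to derive a contradiction, with unitarizability coming from \cref{positiveDefinite}. You even supply one detail the paper leaves implicit, namely the verification (via the $\omega_0$-stability of $\lis$ and the skew-adjointness relation) that $W^\perp$ is a genuine $\lis$-submodule and not merely $H$-invariant.
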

\begin{proof}
  We consider a set of irreducible submodules of $U$
  such that these submodules are orthogonal to each other
  with respect to $(\cdot \mid \cdot)_0$.
  Let $T$ be the set.
  We order the elements of $T$ by inclusion.
  Then $T$ is non-empty and inductively ordered.
  Therefore, from Zorn's lemma, $T$ has a maximal element.
  Take a maximal element of $T$ and denote it by $\mcl{M}$.
  Consider the direct sum of all submodules belonging to $\mcl{M}$.
  Let $M$ denote this sum. Suppose $U \neq M$, we derive the contradiction.
  Since $M$ is a subspace of $U$ which is closed by the action of $H$,
  from \cref{directDecomp}, we have $U = M \oplus M^\perp$.
  Since $M^\perp$ is non-zero $\lisl_2$ submodule of $U$,
  from \cref{UhasIrredSubmod}, $M^\perp$ includes an irreducible submodule.
  Let $W$ denote this. we have $\mcl{M} \cup \{ W \} \in T$,
  that is contradict the maximality of $\mcl{M}$.
  Therefore, we have $U = M$, and $U$ can be decomposed into
  direct sum of irreducible submodules.
  Combining this with \cref{positiveDefinite},
  we can also see the unitarizability of the modules.
\end{proof}
\begin{thm}
  \label{decomposabilityOfG}
  $\lig$ can be decomposed into direct sum of irreducible $\lis$-modules,
  which consists $\lis$ itself.
  All of these modules except for $\lis$ are unitarizable. \qed
\end{thm}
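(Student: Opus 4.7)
The plan is to deduce the theorem as an immediate corollary of \cref{decomposabilityOfU} combined with the orthogonal splitting $\lig = \lis \oplus U$ set up just before \cref{positiveDefinite}. Since both summands are already stable under the adjoint action of $\lis$ (for $U$ this uses the invariance of the standard form together with $Y = \omega_0(X)$, which makes $\omega_0$ preserve $\lis$), it suffices to decompose each of them into irreducibles and to check the unitarizability claim for the pieces other than $\lis$.

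First I would invoke \cref{decomposabilityOfU} to write $U = \bigoplus_i V_i$, with each $V_i$ an irreducible $\lis$-submodule and unitarizable. Then I would observe that $\lis$, viewed as an $\lis$-module via the adjoint action, is exactly the $3$-dimensional irreducible representation of $\lisl_2$ (spanned by the $H$-weight vectors $X, H, Y$ of weights $2, 0, -2$), so that
\begin{align*}
  \lig = \lis \oplus \bigoplus_i V_i
\end{align*}
is the desired decomposition into irreducible $\lis$-submodules, with $\lis$ itself appearing as one of the direct summands.

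To complete the proof, I would just note that every $V_i$ is unitarizable (this is already part of the statement of \cref{decomposabilityOfU}), while $\lis$ itself is \emph{not} unitarizable: as observed in the proof of \cref{positiveDefinite}, the restriction of $(\cdot \mid \cdot)_0$ to $\lis$ fails to be positive definite, which is precisely why $\lis$ must be peeled off as a distinguished exceptional summand before the rest of $\lig$ can be handled by the Zorn's lemma argument in \cref{decomposabilityOfU}. The main obstacle in the overall argument has already been overcome in the proofs of \cref{positiveDefinite,directDecomp,UhasIrredSubmod,decomposabilityOfU}; the present theorem is essentially a packaging step and requires no new technical input.
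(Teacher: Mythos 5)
Your proposal is correct and matches the paper's (implicit) argument: the theorem is stated with an immediate \qed precisely because it follows from the splitting $\lig = \lis \oplus U$ together with \cref{decomposabilityOfU}, with $\lis$ itself being the irreducible adjoint representation and the non-unitarizable exceptional summand. No gap; this is exactly the intended packaging step.
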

\section{$\lisl_2$ modules in $\lig$}
In the following, we consider what kind of modules appear
when $\lig$ is decomposed
into irreducible $\lis$-modules.
In particular, we consider how many unitary principal
or complementary series representations.

For a Lie algebra $\lie{a}$, let $U (\lie{a})$ be the universal enveloping algebra of $\lie{a}$.
Let $V$ be an irreducible $\lis$-module which is an irreducible
component of $\lig$.
The Casimir element $C$ of $U (\lis)$ acts on $V$ by constant multiplication.
Let $\mu$ be this constant.
From \cite[Chapter II, Corollary 1.1.11]{ht},
for an eigenvalue $\lambda_0 \in \bbC$ of $H$ on $V$, some interval $I \subset \bbZ$ exists,
and $V$ can be expressed as a direct sum of 1-dimensional eigenspaces
such that the eigenvalues of $H$ are $\lambda_k = \lambda_0 + 2k$ ($k \in I$).
From \cite[Chapter II, Theorem 1.1.13]{ht},
for an eigenvalue $\lambda$ of $H$ on $V$,
we define $s_1 (k)$ for an integer $k$ as
\begin{align}
  \tag{A} \label{s_1}
  s_1(k) = \frac{8\mu - (\lambda + 2k - 1)^2 + 1}{4}.
\end{align}
We take an element $v_k$ of the eigenspace of $V$
with respect to an eigenvalue $\lambda + 2k$.
Then we have $X (Yv_k) = s_1(k) v_k$.
If $k \in \bbZ$ such that $s_1(k) = 0$ does not exist,
then $V$ is an irreducible module that is neither highest weight module nor lowest weight module.
If there exists a $k \in \bbZ$ such that $s_1(k) = 0$,
$V$ is a highest weight module or a lowest weight module.

Let $\mcl{W}$ be the Weyl group of $\lig$.
Using \cref{LengthOfXIs2}, we may write $H, X, Y$ in $\lis$ as follows.
\begin{align*}
  X &= c_0 w_0 (e_p) + c_1 w_1 (e_q) \hsp (c_0, c_1 \in \bbR, w_0, w_1 \in \mcl{W}, (p, q) \in \{ (0, 1), (0, 0), (1, 1) \}),\\
  Y &= - c_0 w_0 (f_p) - c_1 w_1 (f_q),\\
  H &= - c_0 w_0 (h_p) - c_1 w_1 (h_q).
\end{align*}
Let $\klis, \llis, \mlis, \nlis$ be real numbers such that
$c_0 w_0 (e_p) \in \lig_{\klis \alpha_0 + \llis \alpha_1} ,\; c_1 w_1 (e_q) \in \lig_{\mlis \alpha_0 + \nlis \alpha_1}$.
From \cref{dominantLem}, we can write
$\klis = F_{i + 1} ,\; \llis = F_i ,\; \mlis = F_j ,\; \nlis = F_{j + 1}$
with integers $i, j \geq 0$, and furthermore, $i \in \{ j - 1, j, j + 1 \}$.

When we take the root vector $E \in \lig_{s \alpha_0 + t \alpha_1}$ with $s, t \in \bbZ$,
we want to find out which of the three types of modules $E$ generates
under the action of $\lis$.

We define $L$ in the $xy$-plane as follows.
$L$ is a region satisfying $x \geq 0 ,\; y \geq 0 ,\; (x, y) \neq (0, 0)$,
$x^2 - axy + y^2 \leq 1$ and the following conditions.
\begin{align*}
  x < \klis = F_{i + 1}, \hsp (\text{when } i = j - 1 )\\
  x + y < \klis + \llis = F_i + F_{i + 1}, \hsp (\text{when } i = j)\\
  y < \llis = F_i. \hsp (\text{when } i = j + 1)
\end{align*}
If we take the root $s \alpha_0 + t \alpha_1$ with $s, t \in \bbZ$,
then from \cite[Cor 4.3]{km}, the point in the $xy$-plane given by $(s, t)$
is in the interior or on the boundary of the hyperbola $x^2 - axy + y^2 = 1$.
Let $h_C$ be this hyperbola.
Let $\lambda \in \bbR$ as the value for which $H E = \lambda E$.
We have $\lambda = (s \alpha_0 + t \alpha_1)(H)$.
$\lambda \in (0, 2)$ if and only if $(s, t) \in L$.
In the following, we regard a root $s \alpha_0 + t \alpha_1$ as
a point $(s, t)$ in the $xy$-plane.
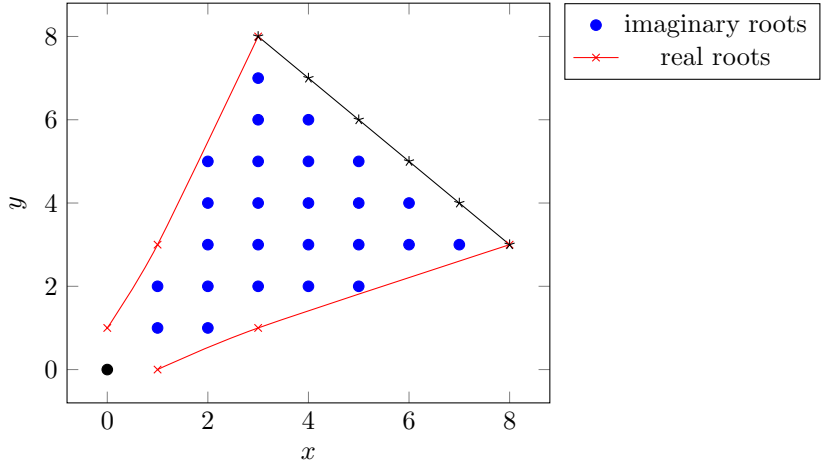
\begin{figure}[H]
  \caption{Imaginary roots and real roots in $L$,
  $a = 3 ,\; X = c_0 r_0 r_1 (e_0) + c_1 r_1 r_0 (e_1)$}
  \label{figure1}
  \pgfplotsset{width=8cm}
  \begin{tikzpicture}
    \begin{axis}[compat = newest,xlabel={$x$},ylabel={$y$},
      legend pos=outer north east]
      \addplot[blue, mark=*, only marks] coordinates{
        (1,1)(1,2)(2,1)(2,2)(2,3)(2,4)(2,5)(3,2)(3,3)(3,4)(3,5)(3,6)(3,7)
        (4,2)(4,3)(4,4)(4,5)(4,6)(5,2)(5,3)(5,4)(5,5)(6,3)(6,4)(7,3)
      };
      \addplot[red, mark=x, smooth] coordinates{(1,0)(3,1)(8,3)};
      \addplot[red, mark=x, smooth] coordinates{(0,1)(1,3)(3,8)};
      \addplot coordinates{(3,8)(4,7)(5,6)(6,5)(7,4)(8,3)};
      \addplot[only marks] coordinates{(0,0)};
      \legend{imaginary roots, real roots}
    \end{axis}
  \end{tikzpicture}
\end{figure}

\begin{lem}
  \label{distanceIsIncreasing}
  We consider the hyperbola $h_C$ on the $xy$-plane. The $h_C$ was represented
  by $x^2 - axy + y^2 = 1$.
  Let $l_b$ be a line represented by the function $y = -x + b$
  with some real number $b \geq 0$.
  There are two intersections of $h_C$ and $l_b$.
  Let $p_1$ and $p_2$ be these points.
  Let $d_b$ be a distance between $p_1$ and $p_2$.
  $d_b$ is strictly monotonically increasing with respect to $b \geq 0$.
  The same result holds when $l_b$ is a line
  represented by $y = b$ or $x = b$.
\end{lem}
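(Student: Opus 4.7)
The plan is to treat each of the three line families by the same elementary method: substitute the line equation into $x^2 - axy + y^2 = 1$ to get a quadratic in a single variable, read off the difference of its two roots from Vieta's formulas, and then check that the resulting closed form for $d_b^2$ is a strictly increasing function of $b \geq 0$ because its leading $b^2$-coefficient is strictly positive.

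For the diagonal line $y = -x + b$, substitution produces a quadratic in $x$ whose $x^2$-coefficient is $a + 2$, whose linear coefficient is $-b(a+2)$, and whose constant term is $b^2 - 1$. Because the two intersection points have the form $(x_i, -x_i + b)$, the squared distance is $d_b^2 = 2(x_1 - x_2)^2$, and Vieta turns $(x_1 - x_2)^2$ into a positive multiple of $(a - 2) b^2 + 4$. For the horizontal line $y = b$, substitution gives a quadratic in $x$ with constant term $b^2 - 1$ and linear coefficient $-ab$, and here $d_b^2 = (x_1 - x_2)^2$ unfolds directly to $(a^2 - 4) b^2 + 4$. The case $x = b$ is handled in one line by the symmetry $x \leftrightarrow y$ of the defining equation of $h_C$.

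The only step that uses the standing hypothesis $a \geq 3$ is the positivity of the $b^2$-coefficient in each of these three formulas — namely $(a - 2)/(a + 2)$ and $a^2 - 4$ — and each is manifestly positive for $a \geq 3$ (in fact for any $a > 2$). There is no genuine obstacle here; the lemma is essentially a discriminant computation repeated three times, and the same positive discriminants also certify, as a by-product, that each line meets $h_C$ in two distinct real points for every $b \geq 0$, so the statement is well posed. The mild care I would take is just to keep track of the sign of the constant term $b^2 - 1$, which changes at $b = 1$ but does not interfere with monotonicity since $d_b^2$ depends only on the discriminant.
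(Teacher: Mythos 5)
Your proposal is correct and follows essentially the same route as the paper: substitute each line into $x^2-axy+y^2=1$, extract the root separation of the resulting quadratic (the paper writes the roots explicitly via the quadratic formula rather than using Vieta, but the resulting formulas $d_b^2 = \tfrac{2\left((a-2)b^2+4\right)}{a+2}$ and $d_b^2=(a^2-4)b^2+4$ agree), and conclude monotonicity from the positive $b^2$-coefficient, with the $x=b$ case dispatched by symmetry. No gaps.
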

\begin{proof}
  First, we consider the case where $l_b$ is represented
  by $y = -x + b$.
  Calculating the $y$-coordinates of $p_1, p_2$ gives
  \begin{align*}
    y = \frac{(a + 2)b \pm \sqrt{(a + 2)(a - 2) b^2 + 4 (a + 2)}}{2(a + 2)}.
  \end{align*}
  Therefore, we have
  \begin{align*}
    d_b = \sqrt{2} \cdot \frac{\sqrt{(a + 2)(a - 2) b^2 + 4 (a + 2)}}{a + 2}.
  \end{align*}
  This $d_b$ is strictly monotonically increasing with respect to $b \geq 0$.

  Next, we consider the case where $l_b$ is represented
  by $y = b$.
  Calculating the $x$-coordinates of $p_1, p_2$ gives
  \begin{align*}
    x = \frac{ab \pm \sqrt{(a^2 - 4) b^2 + 4}}{2}.
  \end{align*}
  Therefore, we have
  \begin{align*}
    d_b = \sqrt{(a^2 - 4) b^2 + 4}.
  \end{align*}
  This $d_b$ is strictly monotonically increasing with respect to $b \geq 0$.
  The same argument is presented when $l_b$ is a line
  represented by $x = b$.
\end{proof}
Let $R$ be the interior of $h_C$ and $h_C$ itself.
For $s, t \in \bbZ$, $(s, t)$ is a root if and only if $(s, t) \in R$.
\begin{lem}
  \label{oneOfDistantPointsIsNotRoot_IEqualsJ}
  If $(x, y) \in L \cup -L$, then neither $(x + \klis - \mlis, y + \llis - \nlis)$
  nor $(x - \klis + \mlis, y - \llis + \nlis)$ are roots.
\end{lem}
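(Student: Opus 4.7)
The plan is to use \cref{distanceIsIncreasing} to show that, along the line through $(x,y)$ parallel to the segment joining $(\klis, \llis)$ and $(\mlis, \nlis)$, both translates of $(x,y)$ are pushed strictly outside the hyperbola $h_C$, and hence cannot be roots.

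Focusing on the $i = j$ case indicated by the label, the two real roots $(\klis, \llis) = (F_{i+1}, F_i)$ and $(\mlis, \nlis) = (F_i, F_{i+1})$ both lie on the line $x + y = F_i + F_{i+1}$ and are in fact its two intersections with $h_C$, separated by Euclidean distance $\sqrt{2}\,\delta$, where $\delta := F_{i+1} - F_i$. The translation vector $(\klis - \mlis, \llis - \nlis) = (\delta, -\delta)$ is parallel to this line, so $(x,y)$ and its two translates are collinear on the line $x + y = b$ with $b := x+y$. Since $(x,y) \in L$ forces $0 \le b < F_i + F_{i+1}$, \cref{distanceIsIncreasing} (applied to lines of the form $y = -x + b$) yields $d_b < \sqrt{2}\,\delta$ for the Euclidean chord length of $h_C$ on $x + y = b$.

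Parametrising the line $x + y = b$ by its $x$-coordinate, the points lying on or inside $h_C$ fill an interval $[b/2 - \Delta,\, b/2 + \Delta]$ of $x$-width $2\Delta = d_b/\sqrt{2} < \delta$. Because $(x,y)$ itself is on or inside $h_C$, one has $x \in [b/2 - \Delta,\, b/2 + \Delta]$, and then $2\Delta < \delta$ forces
\begin{align*}
  x + \delta &\ge (b/2 - \Delta) + \delta > b/2 + \Delta,\\
  x - \delta &\le (b/2 + \Delta) - \delta < b/2 - \Delta,
\end{align*}
so both translated $x$-coordinates lie outside $[b/2 - \Delta,\, b/2 + \Delta]$; both translates therefore lie strictly outside $h_C$, and neither can be a root. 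The case $(x,y) \in -L$ then follows by the symmetry $(x, y) \mapsto (-x, -y)$, which preserves $h_C$, $R$, the root system, and the translation direction.

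The main bookkeeping hazard is the $\sqrt{2}$ conversion between the Euclidean chord length $d_b$ coming out of \cref{distanceIsIncreasing} and the pure $x$-coordinate shift $\delta$ of the translates: one must make sure the inequality $d_b < \sqrt{2}\,\delta$ is pushed through to the strict $2\Delta < \delta$, not merely the weaker $2\Delta < 2\delta$, since that strict version is exactly the slack required to throw both translates outside $h_C$.
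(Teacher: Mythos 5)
Your argument is essentially the paper's own proof: both compare the chord length of $h_C$ on the line through $(x,y)$ parallel to the segment from $(\klis,\llis)$ to $(\mlis,\nlis)$ against the translation distance, using \cref{distanceIsIncreasing} to conclude that each translate lands outside $R$, and your $x$-coordinate bookkeeping with $2\Delta < \delta$ is a correct (if more explicit) version of the paper's observation that two points at distance $d_1$ on a chord of length $d_2 < d_1$ cannot both lie in $R$. The one shortfall is that you restrict to $i=j$ on the basis of the label suffix, whereas the lemma is stated and used for all of $i \in \{j-1, j, j+1\}$; in the other two cases the connecting line is $y=b$ or $x=b$ rather than $x+y=b$, and you would need to run the identical argument using the corresponding parts of \cref{distanceIsIncreasing} (with the $\sqrt{2}$ conversion factor absent), exactly as the paper does.
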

\begin{proof}
  First we assume $(x, y) \in L$.
  The points $(\klis, \llis)$ and $(\mlis, \nlis)$ are on the hyperbola $h_C$.
  Let $l_1$ be the line connecting these two points.
  Using some real number $b > 0$,
  $l_1$ is represented by $y = -x + b$ when $i = j$,
  $y = b$ when $i = j - 1$, and $x = b$ when $i = j + 1$.
  Let $l_2$ be a line parallel to $l_1$ and passing
  through $(x, y)$.
  Using some real number $0 < b' < b$,
  $l_2$ is represented by $y = -x + b'$ when $i = j$,
  $y = b'$ when $i = j - 1$, and $x = b'$ when $i = j + 1$.
  Let $p_{11}, p_{12}$ be intersections of $h_C$ and $l_1$.
  Let $d_1$ be the distance between $p_{11}$ and $p_{12}$.
  Let $p_{21}, p_{22}$ be intersections of $h_C$ and $l_2$.
  Let $d_2$ be the distance between $p_{21}$ and $p_{22}$.
  From \cref{distanceIsIncreasing}, we have $d_1 > d_2$.
  The distance between $(\klis, \llis)$ and $(\mlis, \nlis)$ is $d_1$.
  The distance between $(x, y)$ and $(x + \klis - \mlis, y + \llis - \nlis)$
  is also $d_1$. These two points are on $l_2$.
  The length of the part of $l_2$ that is inside the hyperbola
  is $d_2 < d_1$.
  From the fact that $(x, y)$ is inside $h_C$,
  $(x + \klis - \mlis, y + \llis - \nlis)$ is outside the hyperbola.
  Therefore, $(x + \klis - \mlis, y + \llis - \nlis)$ is not in $R$.
  The same argument for $(x - \klis + \mlis, y - \llis + \nlis)$
  shows that it is not in $R$.
  From symmetry, the case when $(x, y) \in -L$ is also shown.
\end{proof}
\begin{lem}
  \label{XYeIsConstant}
  For a point $(s, t) \in L$ corresponding to the root,
  we consider the root vector $E \in \lig_{s \alpha_0 + t \alpha_1}$.
  Then $[X, [Y, E]] \in \lig_{s \alpha_0 + t \alpha_1}$.
\end{lem}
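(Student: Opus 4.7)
The plan is to expand the double bracket $[X,[Y,E]]$ as a sum of four root-vector terms and then use \cref{oneOfDistantPointsIsNotRoot_IEqualsJ} to kill the two cross terms.

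First I would write $X=c_0 X_1+c_1 X_2$ with $X_1\in\lig_{\klis\alpha_0+\llis\alpha_1}$ and $X_2\in\lig_{\mlis\alpha_0+\nlis\alpha_1}$, so that $Y=\omega_0(X)=-c_0 Y_1-c_1 Y_2$ with $Y_1\in\lig_{-\klis\alpha_0-\llis\alpha_1}$ and $Y_2\in\lig_{-\mlis\alpha_0-\nlis\alpha_1}$. Then for $E\in\lig_{s\alpha_0+t\alpha_1}$, bilinearity of the bracket together with the fact that $[\lig_\mu,\lig_\nu]\subset\lig_{\mu+\nu}$ gives
\begin{align*}
  [X,[Y,E]] &= -c_0^2\,[X_1,[Y_1,E]] - c_0 c_1\,[X_1,[Y_2,E]]\\
            &\quad - c_0 c_1\,[X_2,[Y_1,E]] - c_1^2\,[X_2,[Y_2,E]],
\end{align*}
and the four summands lie in the root spaces
\begin{align*}
  \lig_{s\alpha_0+t\alpha_1},\quad
  \lig_{(s-\mlis+\klis)\alpha_0+(t-\nlis+\llis)\alpha_1},\quad
  \lig_{(s-\klis+\mlis)\alpha_0+(t-\llis+\nlis)\alpha_1},\quad
  \lig_{s\alpha_0+t\alpha_1}
\end{align*}
respectively. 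The first and fourth summands already lie in $\lig_{s\alpha_0+t\alpha_1}$, so the task reduces to showing that the two middle summands are zero.

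Next I would invoke \cref{oneOfDistantPointsIsNotRoot_IEqualsJ}: because $(s,t)\in L$, neither $(s+\klis-\mlis,\,t+\llis-\nlis)$ nor $(s-\klis+\mlis,\,t-\llis+\nlis)$ is a root of $\lig$. Hence the corresponding root spaces are $\{0\}$, which forces $[X_1,[Y_2,E]]=0$ and $[X_2,[Y_1,E]]=0$. Combining this with the previous display yields $[X,[Y,E]]\in\lig_{s\alpha_0+t\alpha_1}$, as required.

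The only real point that needs care is the bookkeeping of root-space indices, making sure the cross terms are exactly the pair ruled out by \cref{oneOfDistantPointsIsNotRoot_IEqualsJ}; once that matching is clear, the lemma is immediate. I do not expect any genuine obstacle, since the hypothesis $(s,t)\in L$ was tailored in \S5 precisely so that this non-root conclusion applies uniformly to all three admissible configurations $i\in\{j-1,j,j+1\}$.
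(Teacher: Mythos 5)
Your proposal is correct and follows essentially the same route as the paper: expand $[X,[Y,E]]$ by bilinearity, observe that the diagonal terms stay in $\lig_{s\alpha_0+t\alpha_1}$ while the cross terms land in $\lig_{(s\pm(\klis-\mlis))\alpha_0+(t\pm(\llis-\nlis))\alpha_1}$, and invoke \cref{oneOfDistantPointsIsNotRoot_IEqualsJ} to conclude those root spaces vanish. The paper's proof is just a slightly more condensed version of the same bookkeeping.
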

\begin{proof}
  We have
  $X = c_0 w_0 (e_p) + c_1 w_1 (e_q), Y = - c_0 w_0 (f_p) - c_1 w_1 (f_q)$.
  Also we have
  $c_0 w_0 (e_p) \in \lig_{\klis \alpha_0 + \llis \alpha_1} ,\; c_1 w_1 (e_q) \in \lig_{\mlis \alpha_0 + \nlis \alpha_1}$.
  Then we have
  \begin{align*}
    [X, [Y, E]] \in \lig_{s \alpha_0 + t \alpha_1} + \lig_{(s - \klis + \mlis) \alpha_0 + (t - \llis + \nlis) \alpha_1}
    + \lig_{(s - \mlis + \klis) \alpha_0 + (t - \nlis + \llis) \alpha_1}.
  \end{align*}
  Since $(s, t)$ is a root,
  from \cref{oneOfDistantPointsIsNotRoot_IEqualsJ},
  $(s - \klis + \mlis, t - \llis + \nlis)$ and $(s - \mlis + \klis, t - \nlis + \llis)$ are not roots.
  Therefore, we have $\lig_{(s - \klis + \mlis) \alpha_0 + (t - \llis + \nlis) \alpha_1} + \lig_{(s - \mlis + \klis) \alpha_0 + (t - \nlis + \llis) \alpha_1} = 0$, and $[X, [Y, E]] \in \lig_{s \alpha_0 + t \alpha_1}$.
\end{proof}
We consider the Casimir element $C$ of $U (\lis)$.
We can write $C = \frac{1}{8} H^2 - \frac{1}{4} H + \frac{1}{2} XY$.
\begin{lem}
  \label{CasimirIsDiagonalizable}
  We consider a root space with respect to a root in $L$.
  $C$ acts on the root space as endomorphism.
  The action is diagonalizable.
\end{lem}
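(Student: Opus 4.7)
The plan is to prove the two claims separately: first that $C$ preserves the root space, then that its restriction there is diagonalizable. For the preservation claim, I would decompose $C = \frac{1}{8}H^2 - \frac{1}{4}H + \frac{1}{2}XY$ and inspect each summand. Since $H$ acts on $\lig_{s\alpha_0 + t\alpha_1}$ as multiplication by the scalar $(s\alpha_0 + t\alpha_1)(H)$, the $H^2$ and $H$ summands preserve the root space trivially. The $XY$ summand acts as $\ad(X)\ad(Y)$, which is precisely the operator analyzed in \cref{XYeIsConstant}: under the standing hypothesis $(s,t) \in L$, that lemma already furnishes $[X, [Y, E]] \in \lig_{s\alpha_0 + t\alpha_1}$ for every $E$ in the root space, so the $XY$ summand also preserves it.

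For diagonalizability, my plan is to present $C$ as a self-adjoint operator with respect to a positive definite Hermitian form and then apply the finite-dimensional spectral theorem. Self-adjointness comes from the relation $\ad(s)^\ast = -\ad(\omega_0(s))$, which is the skew-adjointness identity holding on all of $\lig$ by \cite[\S 2.7]{kac}. Plugging in $\omega_0(H) = -H$, $\omega_0(X) = Y$, and $\omega_0(Y) = X$ (the last since $\omega_0$ is an involution and $Y = \omega_0(X)$) yields $\ad(H)^\ast = \ad(H)$, $\ad(X)^\ast = -\ad(Y)$, and $\ad(Y)^\ast = -\ad(X)$. Composing, $(\ad(X)\ad(Y))^\ast = \ad(Y)^\ast \ad(X)^\ast = \ad(X)\ad(Y)$, so $XY$ is self-adjoint, and hence $C$ is a real linear combination of self-adjoint operators, itself self-adjoint.

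To close, I would note that the root space $\lig_{s\alpha_0 + t\alpha_1}$ is finite-dimensional and that distinct root spaces are mutually orthogonal under $(\cdot \mid \cdot)_0$, because the standard form pairs $\lig_\alpha$ only with $\lig_{-\alpha}$ and $\omega_0$ maps $\lig_\alpha$ to $\lig_{-\alpha}$. Consequently the positive definiteness of $(\cdot \mid \cdot)_0$ on $\lin^+ \oplus \lin^-$ established in the proof of \cref{positiveDefinite} restricts to positive definiteness on the root space, and the spectral theorem then delivers the required diagonalizability. I do not expect a substantive obstacle: the geometry of $L$ enters only through the already proved \cref{XYeIsConstant}, and the remainder is a routine self-adjointness argument.
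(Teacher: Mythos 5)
Your proposal is correct, and the first half (that $C$ preserves the root space) is exactly the paper's argument: split $C = \frac{1}{8}H^2 - \frac{1}{4}H + \frac{1}{2}XY$ and invoke \cref{XYeIsConstant} for the $XY$ term. For diagonalizability, however, you take a genuinely different route. The paper's proof is a one-liner: since $\lig$ is completely reducible as an $\lis$-module (\cref{decomposabilityOfU}, \cref{decomposabilityOfG}), the Casimir element acts as a scalar on each irreducible summand, hence is diagonalizable on $\lig$ and therefore on every $C$-invariant subspace such as the root space. You instead argue directly on the root space: using the skew-adjointness identity $\ad(s)^\ast = -\ad(\omega_0(s))$ together with $\omega_0(H) = -H$, $\omega_0(X) = Y$, $\omega_0(Y) = X$, you show $C$ is self-adjoint for $(\cdot \mid \cdot)_0$, which is positive definite on the finite-dimensional root space (it sits inside $\lin^+ \oplus \lin^-$, where positive definiteness is already recorded in the proof of \cref{positiveDefinite}), and conclude by the spectral theorem. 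Both arguments are sound. The paper's is shorter but leans on the global decomposition theorem of \S 4; yours is local and self-contained, needing only facts about the Hermitian form, and it yields the extra information that the eigenvalues of $C$ on such a root space are real --- a fact the paper does not use at this point (it later allows $\mu \in \bbC$) but which is consistent with, and slightly sharpens, the subsequent discussion. The remark about orthogonality of distinct root spaces is not actually needed, since positive definiteness automatically restricts to subspaces, but it does no harm.
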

\begin{proof}
  From \cref{XYeIsConstant}, $C$ acts on the root spaces as endomorphism.
  Since $\lig$ is completely reducible as an $\lis$-modules,
  the action on the root space is diagonalizable.
\end{proof}
\begin{lem}
  \label{EgeneratesIrreducibleModule}
  For a point $(s, t) \in L$ corresponding to the root,
  we can take the root vector $E \in \lig_{s \alpha_0 + t \alpha_1}$ such that
  $E$ is an eigenvector of the Casimir element $C$, and
  $E$ generates an irreducible $\lis$-module.
\end{lem}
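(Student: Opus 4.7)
The plan is to pick $E$ as a joint eigenvector of $H$ and the Casimir $C$ inside the root space $\lig_{s\alpha_0+t\alpha_1}$; \cref{CasimirIsDiagonalizable} guarantees this is possible. I will then show that the submodule $M := U(\lis)\cdot E$ is irreducible by computing that its weight-$\lambda$ subspace is one-dimensional, where $\lambda = (s\alpha_0+t\alpha_1)(H)\in(0,2)$.

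The key reduction is that $XY$ acts on $E$ by a scalar. From $C = \frac{1}{8}H^2 - \frac{1}{4}H + \frac{1}{2}XY$, together with $HE=\lambda E$ and $CE=\mu E$ (where $\mu$ is the Casimir eigenvalue), one immediately solves for $XY\cdot E$ as a multiple of $E$. An easy induction on $a$, using $YX = XY - H$ and the scalar action of $H$ on each $X^kE$, then shows that $YX^aE$ is a scalar multiple of $X^{a-1}E$, and iterating gives $Y^aX^aE \in \bbC E$. By PBW, the weight-$\lambda$ subspace of $M$ is spanned by $\{Y^aH^bX^a\cdot E\}$, and absorbing $H^b$ into a scalar yields $M_\lambda = \bbC\,E$.

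For the final step, I will observe that $E\in U$: the roots $(\klis,\llis)$ and $(\mlis,\nlis)$ of the two summands of $X$ lie on the hyperbola $h_C$ and hence fail the strict inequalities defining $L$ in all three cases $i\in\{j-1,j,j+1\}$, so they differ from $(s,t)\in L$, and orthogonality of distinct root spaces under $(\cdot\mid\cdot)_0$ forces $E\perp\lis$. Thus $M\subseteq U$, and the Zorn's-lemma argument of \cref{decomposabilityOfU} (with $M$ in place of $U$, using \cref{directDecomp} and \cref{UhasIrredSubmod}) shows that $M$ is completely reducible. If $M = W_1\oplus W_2$ were a nontrivial decomposition, the two weight-$\lambda$ components of $E$ under this decomposition would give two linearly independent vectors in $M_\lambda$, contradicting $\dim M_\lambda = 1$. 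The main obstacle is the computation of $M_\lambda$; once one-dimensionality is established, irreducibility follows cleanly from complete reducibility.
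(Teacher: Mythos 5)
Your proof is correct, but it is far more complete than the paper's, whose entire argument for this lemma is the single sentence ``From \cref{CasimirIsDiagonalizable}, we have the lemma.'' That citation only produces a $C$-eigenvector $E$ in the root space; the passage from there to irreducibility of $U(\lis)\cdot E$ is left entirely implicit (it can be extracted from the classification of irreducible $\lisl_2$-modules --- two irreducibles with the same Casimir eigenvalue and a common $H$-weight are isomorphic, so the constituents through $E$ glue into a diagonal irreducible copy --- but the paper says none of this). Your route supplies a self-contained substitute: from $C=\frac18H^2-\frac14H+\frac12XY$ you get $XY\cdot E\in\bbC E$, the induction with $YX=XY-H$ gives $Y^aX^aE\in\bbC E$, hence $M_\lambda=\bbC E$ by PBW, and complete reducibility of $M\subseteq U$ then forces irreducibility. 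Each step checks out; note only that $X^kE$ is not a root vector (the two summands of $X$ lie in different root spaces), but it is an $H$-eigenvector of eigenvalue $\lambda+2k$, which is all your induction uses. Two small repairs: in the final step you must first observe that both components $E_1,E_2$ of $E$ in $W_1\oplus W_2$ are nonzero --- if $E_2=0$ then $E\in W_1$ and $M=U(\lis)\cdot E\subseteq W_1$ kills $W_2$ --- before calling them linearly independent; and instead of re-running Zorn's lemma inside $M$ you can simply invoke \cref{decomposabilityOfU} together with the standard fact that a submodule of a completely reducible module is completely reducible. What your approach buys is an actual proof where the paper has an assertion, at the modest cost of the elementary computation of $M_\lambda$.
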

\begin{proof}
  From \cref{CasimirIsDiagonalizable}, we have the lemma.
\end{proof}
From \cref{EgeneratesIrreducibleModule}, if we decompose $\lig$ by the action of $\lis$,
the decomposition is compatible with the root space decomposition
in the root in $L$.

We consider how many unitary principal or complementary series representations appear
in the decomposition of $\lig$.
Since the set of eigenvalues of unitary principal or complementary series representations
is $\{ \lambda + 2k \mid k \in \bbZ \}$ for some $\lambda$,
such a module must contain an eigenspace such that its eigenvalue lie
on $[0, 2)$.
Therefore, we consider the root vector of $H$ such that the eigenvalue $\lambda$
of $H$ satisfies $\lambda \in [0, 2)$.

If $\lambda = 0$, i.e., $s = t = 0$,
Since the dimension of $\lih$ is 2,
there are two irreducible components of $V$ which have
0-eigenspace (cf. \cite[\S 7]{tsu}).
Since one is $\lisl_2$ itself, we consider the other module.
The casimir element $C$ acts on this module by a constant multiple (let $\mu$ times).
If $k$ satisfies $s_1 (k) = 0$, we get $8 \mu + 1 = ( 2k - 1)^2$.
Since $\mu < -1$ from \cite[Proposition 7.3]{tsu},
the left hand side is less than 0. Therefore, there is no integral solution
to $s_1 (k) = 0$, and this is an irreducible module that is
neither highest weight module nor lowest weight module.
In particular, this module is an unitary principal series representation.

In the following, we consider the case of $\lambda \in (0, 2)$.
In this case, $(s, t)$ is a root in $L$.
We compute $[X, [Y, E]]$.
Since $Y = -c_0 w_0 (f_p) - c_1 w_1 (f_q)$, we have
\begin{align*}
  [Y, E] = [-c_0 w_0 (f_p), E] + [-c_1 w_1 (f_q), E].
\end{align*}
We have also
\begin{align*}
  [-c_0 w_0 (f_p), E] \in \lig_{(s - \klis)\alpha_0 + (t - \llis)\alpha_1},\\
  [-c_1 w_1 (f_q), E] \in \lig_{(s - \mlis)\alpha_0 + (t - \nlis)\alpha_1}.
\end{align*}
If $[-c_0 w_0 (f_p), E]$ and $[-c_1 w_1 (f_q), E]$ are not 0, then the eigenvalue of $H$ for them must be
in the $(-2, 0)$ interval.
We consider root vectors which the eigenvalue of $H$ are in the $(-2, 0)$.
Since $R = -R$,
the roots with respect to these root vectors are $-L$.
From \cref{oneOfDistantPointsIsNotRoot_IEqualsJ},
if we take two points such that the difference is $(\klis - \mlis, \llis - \nlis)$
and one of which is a root in $-L$, then the other is not a root.
Now we have $((s - \mlis) - (s - \klis), (t - \nlis) - (t - \llis)) = (\klis - \mlis, \llis - \nlis)$.
Therefore, we know that at least one of $[-c_0 w_0 (f_p), E], [-c_1 w_1 (f_q), E]$
is zero.

When both of these are 0, we have $[Y, E] = 0$ and
from the fact that $C = \frac{1}{8} H^2 - \frac{1}{4} H + \frac{1}{2} XY$,
we can write $8 \mu = \lambda^2 - 2\lambda$.

When $[-c_0 w_0 (f_p), E] \neq 0$, i.e., $(s - \klis, t - \llis) \in R$,
we have
\begin{align*}
  [X, [Y, E]] &= [c_0 w_0 (e_p), [-c_0 w_0 (f_p), E]]\\
  &= [E, [-c_0 w_0 (f_p), c_0 w_0 (e_p)]]
  + [-c_0 w_0 (f_p), [c_0 w_0 (e_p), E]].
\end{align*}
We define $\plis \in \bbC$ by $[-c_0 w_0 (f_p), [c_0 w_0 (e_p), E]] = \plis E$,
then we have
\begin{align*}
  [X, [Y, E]] = [E, c_0^2 w_0 (h_p)] + \plis E.
\end{align*}
When $\plis = 0$, we have
\begin{align*}
  [X, [Y, E]] = - [c_0^2 w_0 (h_p), E].
\end{align*}
Therefore in this case, if we let $- [c_0^2 w_0 (h_p), E] = k_0 E$,
then we have $8 \mu = \lambda^2 - 2\lambda + 4k_0$.

When $[c_0 w_0 (e_p), E] = 0$, i.e., $(s + \klis, t + \llis) \not\in R$,
we have $\plis = 0$.

To summarize the above, we take an irreducible decomposition of $\lig$ by $\lis$.
let $s \alpha_0 + t \alpha_1$ be a root in $L$.
Let $E \in \lig_{s \alpha_0 + t \alpha_1}$ such that $E$ generates an irreducible component
of $\lig$.
Let $C$ be the Casimir element of $U (\lis)$, and Let $\mu$ be a complex number
such that $CE = \mu E$.
Let $k_0$ and $p_\lis$ be complex numbers satisfying
\begin{align*}
  [- c_0^2 w_0 (h_p), E] &= k_0 E,\\
  [-c_0 w_0 (f_p), [c_0 w_0 (e_p), E]] &= \plis E.
\end{align*}
If $(s - \mlis, t - \nlis) \not\in R$, we have
\begin{align*}
  8 \mu = \left\{
    \begin{aligned}
      &\lambda^2 - 2\lambda &\left( (s - \klis, t - \llis) \not\in R \right),\\
      &\lambda^2 - 2\lambda + 4k_0
      &\left( (s - \klis, t - \llis) \in R \text{ and } (s + \klis, t + \llis) \not\in R \right),\\
      &\lambda^2 - 2\lambda + 4k_0 + \plis
      &\left( (s - \klis, t - \llis) \in R \text{ and } (s + \klis, t + \llis) \in R \right).
    \end{aligned}
  \right.
\end{align*}
If $(s - \klis, t - \llis) \not\in R$ and not necessarily $(s - \mlis, t - \nlis) \not\in R$,
we have
\begin{align*}
  8 \mu = \left\{
    \begin{aligned}
      &\lambda^2 - 2\lambda &\left( (s - \mlis, t - \nlis) \not\in R \right),\\
      &\lambda^2 - 2\lambda + 4k_0
      &\left( (s - \mlis, t - \nlis) \in R \text{ and } (s + \mlis, t + \nlis) \not\in R \right),\\
      &\lambda^2 - 2\lambda + 4k_0 + \plis
      &\left( (s - \mlis, t - \nlis) \in R \text{ and } (s + \mlis, t + \nlis) \in R \right).
    \end{aligned}
  \right.
\end{align*}
Solving
\begin{align*}
  s_1(k) = \frac{8\mu - (\lambda + 2k - 1)^2 + 1}{4} = 0
\end{align*}
for $k$ on $\bbR$,
we obtain that
\begin{align*}
  k = \left\{
    \begin{aligned}
      & 0, 1 - \lambda &\left( (s - \klis, t - \llis) \not\in R \text{ and } (s - \mlis, t - \nlis) \not\in R \right),\\
      & \frac{1 - \lambda \pm \sqrt{(\lambda - 1)^2 + 4 k_0}}{2}
      &\left(
        \begin{aligned}
          (s - \klis, t - \llis) &\in R \text{ and } (s + \klis, t + \llis) \not\in R\\
          \text{or}\\
          (s - \mlis, t - \nlis) &\in R \text{ and } (s + \mlis, t + \nlis) \not\in R
        \end{aligned}
      \right),\\
      & \frac{1 - \lambda \pm \sqrt{(\lambda - 1)^2 + 4 k_0 + \plis}}{2}
      &\left(
        \begin{aligned}
          (s - \klis, t - \llis) &\in R \text{ and } (s + \klis, t + \llis) \in R\\
          \text{or}\\
          (s - \mlis, t - \nlis) &\in R \text{ and } (s + \mlis, t + \nlis) \in R
        \end{aligned}
      \right).
    \end{aligned}
  \right.
\end{align*}
When $(s - \klis, t - \llis) \not\in R$ and $(s - \mlis, t - \nlis) \not\in R$,
since $(s, t) \in L$, we have $1 - \lambda \in (-1, 1)$.
Therefore, we know that the only integral solution of $s_1 (k) = 0$ is 0.
In this case $E$ belongs to an irreducible lowest weight module.
\section{Classification by roots}
Based on the previous section, we classify the root $(s, t)$ in $L$.
We define the types of roots as follows.
\begin{newEnum}
  \item We say that $(s, t)$ is of type A when $(s - \klis, t - \llis) \not\in R \text{ and } (s - \mlis, t - \nlis) \not\in R$.
  \item We say that $(s, t)$ is of type B when $\left\{
    \begin{aligned}
      (s - \klis, t - \llis) &\in R \text{ and } (s + \klis, t + \llis) \not\in R\\
      \text{or}\\
      (s - \mlis, t - \nlis) &\in R \text{ and } (s + \mlis, t + \nlis) \not\in R
    \end{aligned}
  \right\}$.
  \item We say that $(s, t)$ is of type C when $\left\{
    \begin{aligned}
      (s - \klis, t - \llis) &\in R \text{ and } (s + \klis, t + \llis) \in R\\
      \text{or}\\
      (s - \mlis, t - \nlis) &\in R \text{ and } (s + \mlis, t + \nlis) \in R
    \end{aligned}
    \right\}$.
\end{newEnum}
All roots belong to one of the above types.
We put $f(x, y) = x^2 - axy + y^2$ for $x, y \in \bbR$.
From \cite[Cor 4.3]{km}, for $s, t \in \bbZ,\; (s, t) \neq (0, 0)$,
$(s, t)$ is a real root if and only if $f(s, t) = 1$,
and $(s, t)$ is an imaginary root if and only if $f(s, t) < 1$.
\begin{lem}
  \label{fIsInvariantOfReflection}
  For $x, y, x', y' \in \bbR$, if there exists $w \in \mcl{W}$
  such that $(x', y') = w(x, y)$, then $f(x', y') = f(x, y)$.
\end{lem}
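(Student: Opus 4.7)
The plan is to recognize $f$ as (a multiple of) the standard invariant bilinear form on $\lih^*$ restricted to the real span of the simple roots, and then invoke the fact that the Weyl group preserves this form.

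First I would set up the bilinear form on $\lih^*$. Since the Cartan matrix is symmetric with diagonal entries $2$ and off-diagonal entries $-a$, the standard form $(\cdot \mid \cdot)$ on $\lih$ induces a form on $\lih^*$ for which $(\alpha_0 \mid \alpha_0) = (\alpha_1 \mid \alpha_1) = 2$ and $(\alpha_0 \mid \alpha_1) = -a$. Expanding, for $\alpha = x\alpha_0 + y\alpha_1$ with $x, y \in \bbR$,
\begin{align*}
  (\alpha \mid \alpha) = 2x^2 - 2axy + 2y^2 = 2 f(x, y).
\end{align*}
Thus $f$ is, up to the factor $1/2$, the restriction of the standard form to $\bbR \alpha_0 + \bbR \alpha_1 \subset \lih^*$.

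Next I would invoke the Weyl-invariance of $(\cdot \mid \cdot)$ on $\lih^*$, which is a standard fact for symmetrizable Kac-Moody Lie algebras (see for instance Kac \S 3.7). Given $w \in \mcl{W}$ with $w(x, y) = (x', y')$, writing $\alpha = x\alpha_0 + y\alpha_1$ and $\alpha' = x'\alpha_0 + y'\alpha_1 = w\alpha$, we get
\begin{align*}
  2 f(x', y') = (\alpha' \mid \alpha') = (w\alpha \mid w\alpha) = (\alpha \mid \alpha) = 2 f(x, y),
\end{align*}
which gives the lemma.

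If the reader prefers a self-contained verification, I would reduce to checking invariance under the two simple reflections $r_0, r_1$. A short calculation from the formulas $r_i(\alpha_j) = \alpha_j - a_{ij}\alpha_i$ yields $r_0(x, y) = (-x + ay, y)$ and $r_1(x, y) = (x, ax - y)$, and a direct substitution into $f(x, y) = x^2 - axy + y^2$ shows $f$ is unchanged in each case. Either way the argument is short and there is no real obstacle; the content is entirely contained in the observation that $f$ is the coordinate expression of the Weyl-invariant form on $\lih^*$.
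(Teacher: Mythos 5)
Your proposal is correct, but your primary route differs from the paper's. The paper simply reduces to the generators $r_0, r_1$ (and then, by symmetry, just $r_0$) and verifies $f(ay - x, y) = f(x, y)$ by direct expansion --- exactly the ``self-contained verification'' you offer as a fallback, with the same formulas $r_0(x,y) = (ay - x, y)$ and $r_1(x,y) = (x, ax - y)$. Your main argument instead identifies $2f$ with the restriction of the standard invariant form to $\bbR\alpha_0 + \bbR\alpha_1$ (using that the Cartan matrix is symmetric, so $(\alpha_i \mid \alpha_j) = a_{ij}$) and quotes $\mcl{W}$-invariance of that form. Both are valid; the conceptual version buys an explanation of \emph{why} the lemma holds and meshes with the paper's repeated use of the criterion $f(s,t) \leq 1$ for roots (via \cite[Cor 4.3]{km}), at the cost of importing a standard fact from \cite{kac}, whereas the paper's two-line computation keeps the lemma entirely elementary. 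Either way there is no gap.
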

\begin{proof}
  It is sufficient to check the case $w = r_0$ and the case $w = r_1$.
  From the symmetry, it is sufficient to check the case $w = r_0$.
  In this case, from the fact that $x' = ay - x$ and $y' = y$,
  we have
  \begin{align*}
    f(x', y') &= f(ay - x, y)\\
    &= (ay - x)^2 - ay(ay - x) + y^2\\
    &= x^2 - axy + y^2\\
    &= f(x, y).
  \end{align*}
\end{proof}
First, we know the following results on real roots.
\begin{lem}
  \label{realRootDifferenceIsNegative}
  If $(s, t)$ is a real root in $L$ and $s > t$,
  then $f (s - \klis, t - \llis) \leq 0$.
  Also, if $(s, t)$ is a real root in $L$ and $s < t$,
  then $f (s - \mlis, t - \nlis) \leq 0$.
\end{lem}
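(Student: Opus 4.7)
The plan is to reformulate $f(s - \klis, t - \llis)$ through the standard invariant bilinear form $(\cdot \mid \cdot)$ and reduce the problem to a Fibonacci-type estimate. Because the Cartan matrix is symmetric, $(\alpha_i \mid \alpha_i) = 2$ and $(\alpha_0 \mid \alpha_1) = -a$, so $f(x, y) = \tfrac{1}{2}(x\alpha_0 + y\alpha_1 \mid x\alpha_0 + y\alpha_1)$. Setting $\beta = s\alpha_0 + t\alpha_1$ and $\gamma = \klis \alpha_0 + \llis \alpha_1$, both real roots with $f$-value $1$, a direct expansion of $\tfrac{1}{2}(\beta - \gamma \mid \beta - \gamma)$ gives $f(s - \klis, t - \llis) = 2 - (\beta \mid \gamma)$, so the first claim reduces to the inequality $(\beta \mid \gamma) \geq 2$.

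Next I would parametrize. Since $F_{k+1} > F_k$ for $k \geq 0$, any real root with $s > t$ is of $\alpha$-type, and by \cref{posroot} takes the form $(s, t) = (F_{k+1}, F_k)$ for some $k \geq 0$. Inspecting the three defining inequalities of $L$ case by case ($i = j - 1$, $i = j$, $i = j + 1$) reduces each to $k < i$, so $m := i - k \geq 1$. Expanding the form directly,
\[
  (\beta \mid \gamma) = (F_{k+1}F_{i+1} - F_k F_{i+2}) - (F_{k+1}F_{i-1} - F_k F_i),
\]
and applying the Cassini-type identity $F_{k+1}F_n - F_k F_{n+1} = F_{n-k}$ (valid on $\bbZ$ with the convention $F_{-n} = -F_n$, and readily deduced from $\det(M^n) = 1$ for $M = \left(\begin{smallmatrix} a & -1 \\ 1 & 0 \end{smallmatrix}\right)$, whose $n$-th power has the $F$'s as entries) collapses this to $(\beta \mid \gamma) = F_{m+1} - F_{m-1}$.

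The remaining bound $F_{m+1} - F_{m-1} \geq 2$ is elementary: for $m = 1$ it is $F_2 - F_0 = a \geq 3$, and the sequence $n \mapsto F_{n+1} - F_{n-1}$ is strictly increasing for $a \geq 3$, as a short comparison using $F_{n+2} = aF_{n+1} - F_n$ shows. For the case $s < t$, the argument is symmetric: such a root is of $\beta$-type, $(s, t) = (F_k, F_{k+1})$, the three $L$-constraints now force $k \leq j - 1$, and the same calculation with $i$ replaced by $j$ produces $(s\alpha_0 + t\alpha_1 \mid \mlis \alpha_0 + \nlis \alpha_1) = F_{j - k + 1} - F_{j - k - 1} \geq 2$. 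I expect the main technical hurdle to be the clean derivation of the closed form $F_{m+1} - F_{m-1}$ via the Cassini identity; once that is in hand, both the case analysis on the three $L$-constraints and the final comparison with $2$ are essentially routine.
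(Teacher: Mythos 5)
Your proof is correct, and it takes a genuinely different route from the paper's. The paper exploits the Weyl-invariance of $f$ (its \cref{fIsInvariantOfReflection}): it repeatedly applies $r_0$ and $r_1$ to $(F_{c+1}-F_{i+1},\,F_c-F_i)$ to telescope down to the base case $f(1-F_{d+1},\,-F_d)$ with $d=i-c$, and then expands that directly to get $2-aF_d+2F_{d-1}$, which is bounded above by $0$ using $a\ge 3$. You instead work with the standard form, writing $f(s-\klis,\,t-\llis)=f(\beta)+f(\gamma)-(\beta\mid\gamma)=2-(\beta\mid\gamma)$ and evaluating $(\beta\mid\gamma)$ in closed form via the d'Ocagne-type identity $F_{k+1}F_n-F_kF_{n+1}=F_{n-k}$ (which is indeed valid and follows by a routine induction on $n-k$, with $g(n)=F_{k+1}F_n - F_kF_{n+1}$ satisfying the same recurrence and initial values $g(k)=F_0$, $g(k+1)=F_1$ by the real-root condition $f(F_{k+1},F_k)=1$). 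Reassuringly, the two computations land on the same quantity, since $F_{m+1}-F_{m-1}=aF_m-2F_{m-1}$ recovers the paper's $2-aF_d+2F_{d-1}$ with $d=m$. Your version buys an exact closed form $(\beta\mid\gamma)=F_{m+1}-F_{m-1}\ge a$ for arbitrary $k<i$ in one step, avoids the parity bookkeeping in the paper's telescoping chain, and makes the geometric content (the inner product of two real roots on the same branch grows with their "distance" $m$) transparent; the paper's version has the advantage of reusing machinery (\cref{fIsInvariantOfReflection}) that it needs anyway for \cref{realRootIsTypeB} and \cref{imagRootIsTypeAorC}. Your reduction of the three defining inequalities of $L$ to $k<i$ (resp.\ $k\le j-1$) is also correct, since $\{F_n\}$ is strictly increasing, and matches the paper's (unelaborated) claim $c<i$.
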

\begin{proof}
  From symmetry, it is sufficient to show $f (s - \klis, t - \llis) \leq 0$ when $s > t$.
  We can write $s = F_{c + 1} ,\; t = F_c$
  with $c \geq 0$ being an integer.
  Since $\klis = F_{i + 1}$ and $\llis = F_i$,
  we have $c < i$.
  Let $d_{ic} = i - c$.
  From \cref{fIsInvariantOfReflection}, by acting $r_0$ and $r_1$ on $(s - \klis, t - \llis)$,
  we know that
  \begin{align*}
    f(s - \klis, t - \llis) &= f(F_{c + 1} - F_{i + 1}, F_c - F_i) \\
    &= f (r_0 (F_{c + 1} - F_{i + 1}, F_c - F_i))\\
    &= f (F_{c - 1} - F_{i - 1}, F_c - F_i)\\
    &= f (r_1 (F_{c - 1} - F_{i - 1}, F_c - F_i))\\
    &= f (F_{c - 1} - F_{i - 1}, F_{c - 2} - F_{i - 2})\\
    &= \cdots\\
    &= \left\{
    \begin{aligned}
      f (F_1 - F_{d_{ic} + 1}, F_0 - F_{d_{ic}}) \hsp (\text{when $c$ is even})\\
      f (F_0 - F_{d_{ic}}, F_1 - F_{d_{ic} + 1}) \hsp (\text{when $c$ is odd})
    \end{aligned}
    \right.\\
    &= f(F_1 - F_{d_{ic} + 1}, F_0 - F_{d_{ic}}).
  \end{align*}
  Since $F_1 = 1 ,\; F_0 = 0$, we have
  \begin{align*}
    f(s - \klis, t - \llis)
    &= f(1 - F_{d_{ic} + 1}, - F_{d_{ic}})\\
    &= 2 - a F_{d_{ic}} + 2 F_{d_{ic} - 1}\\
    &< 2 - 2(F_{d_{ic}} - F_{d_{ic} - 1})\\
    &\leq 0.
  \end{align*}
\end{proof}
\begin{lem}
  \label{realRootIsTypeB}
  If $(s, t)$ is a real root in $L$,
  then $(s, t)$ is of type B.
\end{lem}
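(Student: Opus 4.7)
The plan is to combine \cref{realRootDifferenceIsNegative} with a polarization identity for the quadratic form $f$ to verify the type B condition directly. Since $(s,t)$ is a real root in $L$, \cref{posroot} gives $(s,t) = (F_{c+1}, F_c)$ or $(s,t) = (F_c, F_{c+1})$ for some integer $c \geq 0$; in particular $s > t$ in the first case and $s < t$ in the second, since $F_{c+1} > F_c$ for all $c \geq 0$ when $a \geq 3$. I would handle $s > t$ in detail and note that $s < t$ is treated identically with $(\mlis, \nlis)$ in place of $(\klis, \llis)$, yielding the other disjunct of the type B definition.

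With $s > t$, \cref{realRootDifferenceIsNegative} immediately gives $f(s - \klis, t - \llis) \leq 0 \leq 1$, so $(s - \klis, t - \llis) \in R$; this is the first half of the corresponding clause in the type B definition. For the second half I would expand $f$ using its associated symmetric bilinear form $B$ given by $B((x_1,y_1),(x_2,y_2)) = x_1 x_2 - \tfrac{a}{2}(x_1 y_2 + x_2 y_1) + y_1 y_2$. Since $(\klis, \llis) = (F_{i+1}, F_i)$ is itself a real root (the quantity $F_{k+1}^2 - a F_{k+1} F_k + F_k^2 = 1$ is preserved by the recursion), we have $f(\klis, \llis) = 1$; combined with $f(s, t) = 1$, polarization gives
\begin{align*}
  f(s \pm \klis, t \pm \llis) = 2 \pm 2 B((s,t), (\klis, \llis)).
\end{align*}
The inequality $f(s - \klis, t - \llis) \leq 0$ therefore forces $B((s,t),(\klis,\llis)) \geq 1$, whence $f(s + \klis, t + \llis) \geq 4 > 1$, giving $(s + \klis, t + \llis) \notin R$.

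Neither step looks like a serious obstacle: the substantive content has already been absorbed into \cref{realRootDifferenceIsNegative} through its iterated Weyl reflection argument. The only points that require care are confirming that the dichotomy $s > t$ versus $s < t$ exhausts the real roots in $L$ (no real root has $s = t$, since $F_{c+1} \neq F_c$), and that in the $s > t$ case the hypothesis of \cref{realRootDifferenceIsNegative} matches the $(\klis, \llis)$ branch of the type B disjunction rather than the $(\mlis, \nlis)$ branch. The edge case $(s,t) = (\klis, \llis)$, where $(s-\klis, t-\llis)$ is the origin, also causes no trouble since $f(0,0) = 0 \leq 1$ and $f(2\klis, 2\llis) = 4 > 1$.
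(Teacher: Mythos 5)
Your proposal is correct, and it replaces the second half of the paper's argument with a genuinely different (and shorter) computation. The first half coincides with the paper: both invoke \cref{realRootDifferenceIsNegative} to get $f(s - \klis, t - \llis) \leq 0$, hence $(s - \klis, t - \llis) \in R$. For the remaining claim $(s + \klis, t + \llis) \notin R$, the paper repeats the iterated Weyl-reflection computation of \cref{realRootDifferenceIsNegative} almost verbatim on the sum, reducing $f(s + \klis, t + \llis)$ to $f(1 + F_{d_{ic}+1}, F_{d_{ic}}) = 2 + aF_{d_{ic}} - 2F_{d_{ic}-1} > 4$ via \cref{fIsInvariantOfReflection}. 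You instead observe that $f(s,t) = f(\klis,\llis) = 1$ (both are real roots, and indeed $F_{k+1}^2 - aF_{k+1}F_k + F_k^2 = 1$ follows by induction from the recursion), so polarization gives $f(s \pm \klis, t \pm \llis) = 2 \pm 2B((s,t),(\klis,\llis))$, and the already-established inequality on the difference forces $B \geq 1$ and hence $f(s+\klis, t+\llis) \geq 4 > 1$. This buys you a second reflection computation for free; the paper's version gives the marginally stronger strict bound $> 4$, but only $> 1$ is needed. Your side remarks are also consistent with the paper: the $s < t$ case is handled symmetrically via the $(\mlis,\nlis)$ branch, no real root has $s = t$ since $\{F_k\}$ is strictly increasing, and the edge case $(s,t) = (\klis,\llis)$ in fact never arises because the defining inequalities of $L$ force $c < i$ (as the paper notes in \cref{realRootDifferenceIsNegative}), though your direct check of it is harmless.
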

\begin{proof}
  First we show that $(s, t)$ is not of type A.
  From the fact that $(s, t)$ is a real root and
  from symmetry, we can write $s = F_{c + 1} ,\; t = F_c$
  with $c \geq 0$ being an integer.
  From $\klis = F_{i + 1} ,\; \llis = F_{i}$,
  we have $c < i$.
  From \cref{realRootDifferenceIsNegative},
  $f (s - \klis, t - \llis) \leq 0$.
  Therefore, $(s - \klis, t - \llis) \in R$ and so we know that $(s, t)$ is not of type A.

  Next, we show that $(s, t)$ is of type B.
  To show this, we need to show that $(s + \klis, t + \llis) \not\in R$.
  We show $f(s + \klis, t + \llis) > 1$.
  Let $d_{ic} = i - c$.
  From \cref{fIsInvariantOfReflection}, by acting $r_0$ and $r_1$ on $(s + \klis, t + \llis)$,
  we have
  \begin{align*}
    f(s + \klis, t + \llis)
    &= f(F_{c + 1} + F_{i + 1}, F_c + F_i)\\
    &= f (r_0 (F_{c + 1} + F_{i + 1}, F_c + F_i))\\
    &= f (F_{c - 1} + F_{i - 1}, F_c + F_i)\\
    &= f (r_1 (F_{c - 1} + F_{i - 1}, F_c + F_i))\\
    &= f (F_{c - 1} + F_{i - 1}, F_{c - 2} + F_{i - 2})\\
    &= \cdots\\
    &= \left\{
    \begin{aligned}
      f (F_1 + F_{d_{ic} + 1}, F_0 + F_{d_{ic}}) \hsp (\text{when $c$ is even})\\
      f (F_0 + F_{d_{ic}}, F_1 + F_{d_{ic} + 1}) \hsp (\text{when $c$ is odd})
    \end{aligned}
    \right.\\
    &= f (F_1 + F_{d_{ic} + 1}, F_0 + F_{d_{ic}})\\
    &= f(1 + F_{d_{ic} + 1}, F_{d_{ic}})\\
    &= 2 + a F_{d_{ic}} - 2 F_{d_{ic} - 1}\\
    &> 2 + 2(F_{d_{ic}} - F_{d_{ic} - 1})\\
    &> 4.
  \end{align*}
  This shows that $(s, t)$ is of type B.
\end{proof}
We classify also for imaginary roots in $L$.
\begin{lem}
  \label{imagPlusImagisImag}
  If $(s, t), (s', t')$ ($s, t, s', t' \geq 0$) are imaginary roots,
  then $(s + s', t + t')$ is also imaginary root.
\end{lem}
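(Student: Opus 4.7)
The plan is to polarize the quadratic form $f$: introducing the symmetric bilinear form $B((s,t), (s',t')) = ss' - \frac{a}{2}(st' + s't) + tt'$, one has $f(s+s', t+t') = f(s,t) + f(s',t') + 2 B((s,t), (s',t'))$. Because $(s,t)$ and $(s',t')$ are imaginary roots, $f$ takes an integer value $<1$ on each, hence $f(s,t) \leq 0$ and $f(s',t') \leq 0$. It therefore suffices to show $B \leq 0$ on pairs of positive imaginary roots, after which the sum is a nonzero lattice point in the positive quadrant with $f \leq 0$, and by \cite[Cor 4.3]{km} this makes $(s+s',t+t')$ an imaginary root. I restrict to positive imaginary roots since this is the case relevant to the surrounding classification, and the negative case is symmetric.

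To prove $B \leq 0$, I would parameterize by the slope. For a positive imaginary root $(s,t)$, both coordinates are strictly positive (else $f(s,t) = s^2$ or $t^2 \geq 1$), so the ratio $r = t/s$ is well-defined; factoring $r^2 - ar + 1 = (r - \phi_+)(r - \phi_-)$ with $\phi_\pm = \frac{a \pm \sqrt{a^2 - 4}}{2}$, the condition $f(s,t) \leq 0$ translates to $r \in [\phi_-, \phi_+]$. Writing $B((s,t),(s',t')) = ss' \cdot g(r, r')$ with $g(r, r') = 1 - \frac{a}{2}(r + r') + rr'$, observe that $g$ is affine in each of its two arguments separately, so its maximum over the rectangle $[\phi_-, \phi_+]^2$ must be attained at one of the four corners.

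Evaluating the corners is the only calculation. Using $\phi_+ + \phi_- = a$ and $\phi_+ \phi_- = 1$, the diagonal corners give $g(\phi_\pm, \phi_\pm) = 1 - a \phi_\pm + \phi_\pm^2 = 0$, while the off-diagonal corners give $g(\phi_-, \phi_+) = g(\phi_+, \phi_-) = 2 - a^2/2$, which is strictly negative because $a \geq 3$. Consequently $g(r, r') \leq 0$ on the whole rectangle, hence $B((s,t),(s',t')) \leq 0$, hence $f(s+s', t+t') \leq 0$, and the sum is a positive imaginary root as explained above. The only genuine content is the sign bound on $B$; the main obstacle — ensuring non-positivity over a two-dimensional region — is dissolved by the bilinearity of $g$, which reduces the check to four explicit evaluations at the corners.
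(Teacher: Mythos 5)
Your proof is correct, and it takes a genuinely different route from the paper's. The paper argues geometrically: since $f(rs,rt)=r^2f(s,t)\le 0$, each imaginary root lies on a ray inside the asymptotes of $x^2-axy+y^2=1$, and the sum, being the midpoint of $(2s,2t)$ and $(2s',2t')$, stays inside the asymptotes by convexity of that cone; then the criterion of \cite[Cor 4.3]{km} converts $f\le 0$ back into ``imaginary root.'' You instead polarize $f$ and reduce the whole statement to the sign of the bilinear form $B$, which you control by the corner argument for the function $g(r,r')=1-\tfrac{a}{2}(r+r')+rr'$, affine in each variable, on the rectangle $[\phi_-,\phi_+]^2$; the two corner values $0$ and $2-a^2/2<0$ do all the work, and the computation checks out. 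Your version is more computational but quantitative --- it actually proves the stronger inequality $f(s+s',t+t')\le f(s,t)+f(s',t')$ --- whereas the paper's is shorter and makes the convexity of the light cone the visible mechanism. One caveat shared by both arguments: each implicitly assumes the two imaginary roots lie in the same half of the light cone (both positive or both negative); as literally stated the lemma fails for, e.g., $(1,1)$ and $(-1,-1)$, whose sum is the origin. You flag this restriction explicitly; the paper does not, though in its one application (\cref{imagRootIsTypeAorC}) both summands are indeed positive imaginary roots, so nothing downstream is affected.
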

\begin{proof}
  Since $f(s, t) \leq 0$, for any $r \in \bbR$,
  we have $f(rs, rt) = r^2 f(s, t) \leq 0$.
  It shows that the line connecting the origin and $(s, t)$
  is inside the asymptotes of the hyperbola $x^2 - axy + y^2 = 1$.
  Similarly, the line connecting the origin and $(s', t')$
  is also inside the asymptotes.

  Since $(s + s', t + t')$ is the midpoint of $(2s, 2t)$ and $(2s', 2t')$,
  this point is also inside the asymptotes. Therefore,
  $(s + s', t + t')$ is an imaginary root.
\end{proof}
\begin{lem}
  \label{imagRootIsTypeAorC}
  Let $(u, v) \in L$ ($u > v$) be a real root such that $(u \alpha_0 + v \alpha_1)(H) \neq 0$.
  Put $(s, t) = (\klis - u, \llis - v)$.
  Then $(s, t)$ is a type C imaginary root in $L$.
  Similarly, let $(u', v') \in L$ ($u' < v'$) be a real root such that
  $(u' \alpha_0 + v' \alpha_1)(H) \neq 0$.
  Put $(s', t') = (\mlis - u', \nlis - v')$.
  Then $(s', t') \in L$ and $(s', t')$ is the imaginary root of type C.
  The other imaginary roots in $L$ are of type A.
\end{lem}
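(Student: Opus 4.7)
The plan is to verify the two explicit scenarios directly and then obtain the third assertion (that other imaginary roots in $L$ are of type A) by a contrapositive argument.

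For the first scenario, by \cref{posroot} write $(u, v) = (F_{c + 1}, F_c)$ with $c \geq 0$. The linear bound defining $L$ forces $c < i$ in each of the regimes $i \in \{j - 1, j, j + 1\}$; the hypothesis $(u \alpha_0 + v \alpha_1)(H) \neq 0$ is needed only to rule out $c = 0$ in the regime $i = j + 1$, where a direct computation shows $\lambda = 2 v / F_i$. From $c < i$ I would read off that $(s, t) = (\klis - u, \llis - v)$ has non-negative integer coordinates, is non-zero, and satisfies the linear bound for $L$. By \cref{realRootDifferenceIsNegative} applied to $(u, v)$, one has $f(s, t) = f(u - \klis, v - \llis) \leq 0$, so $(s, t) \in L$ and is imaginary. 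For type C, $(s - \klis, t - \llis) = -(u, v) \in R$ since the negative of a real root is a root. For $(s + \klis, t + \llis)$, introduce the symmetric bilinear form $B$ defined by $f(X + Y) = f(X) + f(Y) + B(X, Y)$. From $(s, t) = (\klis, \llis) - (u, v)$ one computes $f(s, t) = 2 - B((\klis, \llis), (u, v))$ and $B((s, t), (\klis, \llis)) = 2 - B((u, v), (\klis, \llis)) = f(s, t)$, whence
\begin{align*}
  f(s + \klis, t + \llis) = 2 f(s, t) + 1 \leq 1,
\end{align*}
so $(s + \klis, t + \llis) \in R$ and $(s, t)$ is of type C. The second scenario is obtained by swapping $(\klis, \llis) \leftrightarrow (\mlis, \nlis)$ and interchanging the roles of the two coordinates.

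For the third assertion, let $(s, t) \in L$ be an imaginary root not of either of the above forms; I need to show $(s - \klis, t - \llis) \not\in R$ and $(s - \mlis, t - \nlis) \not\in R$, and by the same symmetry it suffices to establish the first. Contrapositively, suppose $(s - \klis, t - \llis) \in R$; I will show $(s, t)$ falls under the first scenario, yielding the contradiction. Put $(u, v) = (\klis, \llis) - (s, t) \in R$, which is non-zero because $(s, t)$ imaginary and $(\klis, \llis)$ real exclude $(s, t) = (\klis, \llis)$. A case-by-case check of the linear constraint on $L$ (bound on $s + t$ when $i = j$, on $s$ when $i = j - 1$, on $t$ when $i = j + 1$) shows that at least one of $s - \klis$, $t - \llis$ is strictly negative. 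Integer points with strictly mixed signs and both $|x|, |y| \geq 1$ satisfy $f \geq a + 2 > 1$ and so lie outside $R$; the axis subcases either reduce directly to an instance of the first scenario (for example, $(s - \klis, t - \llis) = (-1, 0)$ gives $(u, v) = (1, 0)$) or force $(s, t) \not\in R$ by explicit computation using $a F_i = F_{i + 1} + F_{i - 1}$. Consequently $(u, v)$ lies in the closed first quadrant and is a non-zero positive root. It cannot be imaginary: otherwise $(\klis, \llis) = (s, t) + (u, v)$ would be a sum of two positive imaginary roots and hence imaginary by \cref{imagPlusImagisImag}, contradicting that $(\klis, \llis)$ is real. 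Hence $(u, v) = (F_{c + 1}, F_c)$ or $(F_c, F_{c + 1})$ for some $c \geq 0$. If it were the type $\beta$ option, then reducing by $r_0, r_1$ as in the proof of \cref{realRootDifferenceIsNegative} and using \cref{fIsInvariantOfReflection} yields
\begin{align*}
  B((\klis, \llis), (F_c, F_{c + 1})) = 2 F_{i + c} - a F_{i + c + 1} < 0,
\end{align*}
whence $f(s, t) = 2 - B((\klis, \llis), (F_c, F_{c + 1})) > 2$, contradicting $(s, t) \in R$. Therefore $(u, v) = (F_{c + 1}, F_c)$ with $u > v$, and the first-scenario analysis shows $(u, v) \in L$ with $\lambda \neq 0$, placing $(s, t)$ in the first scenario.

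The principal technical obstacle is the case-work in the third part: excluding the mixed-sign and axis configurations for $(s - \klis, t - \llis)$ requires going through each of the three regimes of $L$, while excluding the type $\beta$ option for $(u, v)$ requires the explicit bilinear-form calculation $B((F_{i + 1}, F_i), (F_c, F_{c + 1})) = 2 F_{i + c} - a F_{i + c + 1}$, which I would derive by iterating $r_0$ and $r_1$ to reduce the pair to a canonical form in the spirit of \cref{realRootDifferenceIsNegative} and \cref{realRootIsTypeB}.
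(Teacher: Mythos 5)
Your proof is correct, and while it follows the same skeleton as the paper's (verify the two explicit scenarios, then rule out the remaining imaginary roots by showing the shifted points cannot be roots, with \cref{imagPlusImagisImag} doing the decisive work in both), two key steps are executed genuinely differently. For the type-C verification the paper reduces $f(s+\klis,t+\llis)=f(2F_{i+1}-F_{c+1},2F_i-F_c)$ to a canonical form by repeatedly applying $r_0,r_1$ via \cref{fIsInvariantOfReflection}, arriving at $-2aF_{d_{ic}}+4F_{d_{ic}-1}+5\leq -1$; your polarization identity $f(s+\klis,t+\llis)=2f(s,t)+1\leq 1$ reaches the same conclusion in one line from \cref{realRootDifferenceIsNegative} (the two answers agree, since the paper's reduction gives $f(s,t)=2-aF_{d_{ic}}+2F_{d_{ic}-1}$), at the mild cost of landing on the weaker bound $\leq 1$ rather than $\leq -1$, which still suffices for membership in $R$. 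For the type-A part, the paper first invokes \cref{oneOfDistantPointsIsNotRoot_IEqualsJ} so that only one shift needs attention, and then asserts without comment that this shift ``is an imaginary root or not a root'' --- i.e., that a real-root shift would necessarily place $(s'',t'')$ in one of the two listed forms. Your sign analysis together with the computation $B((F_{i+1},F_i),(F_c,F_{c+1}))=2F_{i+c}-aF_{i+c+1}<0$ explicitly excludes the one configuration that assertion quietly skips (namely $(\klis,\llis)-(s,t)$ being a type-$\beta$ real root, which is not covered by the first scenario), so your write-up is actually more careful than the paper's at this point; the price is the additional case-work on mixed-sign and axis points. Your reading of the hypothesis $(u\alpha_0+v\alpha_1)(H)\neq 0$ as excluding exactly $c=0$ in the regime $i=j+1$ (where $\lambda=2v/F_i$) is also consistent with the role it plays in the paper's eigenvalue argument.
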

\begin{proof}
  From \cref{realRootDifferenceIsNegative},
  $f(-s, -t) = f(s, t) \leq 0$. It shows that $(s, t)$ is a imaginary root.
  We also see that the eigenvalue of $H$ for $(s, t)$ is
  in the range $(0, 2)$. Therefore, $(s, t) \in L$ is shown.

  We show that $(s, t)$ is of type C. To show this,
  we show that $f(s + \klis, t + \llis) \leq 1$.
  Using $c \in \bbZ$, we can write $(u, v) = (F_{c + 1}, F_c)$.
  Together this with $s + \klis = 2 \klis - u ,\; t + \llis = 2 \llis - v$,
  we have
  \begin{align*}
    f(s + \klis, t + \llis) = f(2F_{i + 1} - F_{c + 1}, 2F_i - F_c).
  \end{align*}
  Let $d_{ic} = i - c > 0$.
  From \cref{fIsInvariantOfReflection},
  acting $r_0, r_1$ on $(s + \klis, t + \llis)$,
  $i - c = \lambda \geq 1$, we have
  \begin{align*}
    f(2F_{i + 1} - F_{c + 1}, 2F_i - F_c)
    &= f (r_0 (2F_{i + 1} - F_{c + 1}, 2F_i - F_c))\\
    &= f (2F_{i - 1} - F_{c - 1}, 2F_i - F_c)\\
    &= f (r_1 (2F_{i - 1} - F_{c - 1}, 2F_i - F_c))\\
    &= f (2F_{i - 1} - F_{c - 1}, 2F_{i - 2} - F_{c - 2})\\
    &= \cdots\\
    &= \left\{
      \begin{aligned}
        f (2F_{d_{ic} + 1} - F_1, 2F_{d_{ic}} - F_0) \hsp (\text{when $c$ is even})\\
        f (2F_{d_{ic}} - F_0, 2F_{d_{ic} + 1} - F_1) \hsp (\text{when $c$ is odd})
      \end{aligned}
      \right.\\
    &= f(2F_{d_{ic} + 1} - F_1, 2F_{d_{ic}} - F_0)\\
    &= f(2F_{d_{ic} + 1} - 1, 2F_{d_{ic}})\\
    &= - 2a F_{d_{ic}} + 4F_{d_{ic} - 1} + 5\\
    &< -6 F_{d_{ic}} + 4 F_{d_{ic} - 1} + 5\\
    &= (-4 F_{d_{ic}} + 4 F_{d_{ic} - 1})
    + (-2 F_{d_{ic}} + 5)\\
    &< -4 - 2F_{d_{ic}} + 5\\
    &\leq -1.
  \end{align*}
  This shows that $f(s + \klis, t + \llis) \leq -1$
  and that $(s, t)$ is type C.
  From symmetry, we also know that $(s', t')$ is in $L$
  and is the imaginary root of type C.

  Finally, we show the other imaginary roots in $L$ are of type A.
  Let $(s'', t'') \in L$ be such an imaginary root.
  We show $(s'' - \mlis, t'' - \nlis) \not\in R$ and $(s'' - \klis, t'' - \llis) \not\in R$.
  If $(s'' - \mlis, t'' - \nlis) \in R$ or $(s'' - \klis, t'' - \llis) \in R$,
  $(s'' - \mlis, t'' - \nlis) \in -L$ or $(s'' - \klis, t'' - \llis) \in -L$.
  Since $(s'' - \mlis, t'' - \nlis) - (s'' - \klis, t'' - \llis) = (\klis - \mlis, \llis - \nlis)$,
  from \cref{oneOfDistantPointsIsNotRoot_IEqualsJ},
  we know $(s'' - \mlis, t'' - \nlis) \not\in R$ or $(s'' - \klis, t'' - \llis) \not\in R$.

  From symmetry, it is sufficient to consider when
  $(s'' - \mlis, t'' - \nlis) \not\in R$.
  Under this assumption, $(s'' - \klis, t'' - \llis)$
  is an imaginary root or not a root.
  If $(s'' - \klis, t'' - \llis)$ is imaginary root,
  then $(\klis - s'', \llis - t'')$ is also imaginary root
  from the symmetry of $R$.
  We consider that $(\klis, \llis) = (s'', t'') + (\klis - s'', \llis - t'')$.
  The left hand side is real root and the right hand side is the sum
  of imaginary roots, which contradicts \cref{imagPlusImagisImag}.
  Therefore, $(s'' - \klis, t'' - \llis)$ is not a root and
  $(s'', t'')$ is of type A.
\end{proof}
The contents of this section can be summarized as follows.
\begin{thm}
  \label{classifyRootsByType}
  \begin{newEnum}
    \item A real roots in $L$ is of type B.
    \item We consider an imaginary root that can be
    written as $(\klis - s, \llis - t)$ or
    $(\mlis - s, \nlis - t)$ where $(s, t)$ is a real root. Such an imaginary root is of type C.
    \item The other imaginary roots are of type A.\qed
  \end{newEnum}
\end{thm}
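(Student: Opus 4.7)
The plan is to deduce \cref{classifyRootsByType} by assembling the two structural lemmas of this section; no new computation is needed. Part (1) is literally the conclusion of \cref{realRootIsTypeB}, so I would dispatch it by direct citation. Part (3) is the final assertion of \cref{imagRootIsTypeAorC}, which says that any imaginary root in $L$ not arising from the construction in (2) is of type A; again a one-line citation suffices.

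For part (2), I would argue as follows. By \cref{posroot}, every real root has the shape $(F_{c+1}, F_c)$ or $(F_c, F_{c+1})$ for some $c \geq 0$. Since the sequence $\{F_n\}$ with $a \geq 3$ is strictly increasing from $F_1 = 1$, the first case satisfies $s > t$ and the second satisfies $s < t$; in particular no real root lies on the diagonal. Hence, given a real root $(u, v) \in L$, exactly one of the two symmetric hypotheses of \cref{imagRootIsTypeAorC} is in force, and the lemma produces $(\klis - u, \llis - v)$ (respectively $(\mlis - u, \nlis - v)$) as an imaginary root in $L$ of type C. The side hypothesis $(u\alpha_0 + v\alpha_1)(H) \neq 0$ appearing in \cref{imagRootIsTypeAorC} is automatic, since $L$ is defined so that the $H$-eigenvalue at any of its points lies in the open interval $(0, 2)$.

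Since the theorem is essentially a repackaging of \cref{realRootIsTypeB} and \cref{imagRootIsTypeAorC}, I do not foresee any substantive obstacle. The only point requiring a line of attention is to confirm that the two symmetric branches of \cref{imagRootIsTypeAorC} jointly cover every real root in $L$, which reduces to the trivial observation that $F_c \neq F_{c+1}$ for all $c \geq 0$ when $a \geq 3$.
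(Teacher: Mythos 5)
Your proposal is correct and matches the paper exactly: the paper gives no separate proof for \cref{classifyRootsByType} (it is stated with \qed as a summary of the section), the intended justification being precisely the combination of \cref{realRootIsTypeB} and \cref{imagRootIsTypeAorC} that you describe. Your added remarks — that the two symmetric branches cover all real roots in $L$ because no real root lies on the diagonal, and that the hypothesis $(u\alpha_0 + v\alpha_1)(H) \neq 0$ is automatic for points of $L$ — are correct and, if anything, slightly more careful than the paper itself.
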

We now summarize the irreducible $\lis$-modules
through type A and type C.
For $\lis$-modules through type A, we have the following.
\begin{lem}
  \label{typeAIsLowest}
  An irreducible $\lis$-module containing a root vector about
  a root of type A in $L$ is a lowest weight module
  which the root vector is the lowest weight element.
\end{lem}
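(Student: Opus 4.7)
The plan is to show that the defining property of type A forces $[Y, E] = 0$ for every root vector $E \in \lig_{s\alpha_0 + t\alpha_1}$, which makes any such $E$ a lowest weight vector in whichever irreducible $\lis$-component contains it.

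Let $V$ be an irreducible $\lis$-submodule of $\lig$ that meets $\lig_{s\alpha_0 + t\alpha_1}$, and pick a nonzero $E \in V \cap \lig_{s\alpha_0 + t\alpha_1}$. Using $Y = -c_0 w_0(f_p) - c_1 w_1(f_q)$, the bracket $[Y, E]$ splits as a sum of two terms lying respectively in $\lig_{(s - \klis)\alpha_0 + (t - \llis)\alpha_1}$ and $\lig_{(s - \mlis)\alpha_0 + (t - \nlis)\alpha_1}$. By the type A hypothesis, neither $(s - \klis, t - \llis)$ nor $(s - \mlis, t - \nlis)$ lies in $R$, so both of these root spaces are zero, and therefore $[Y, E] = 0$.

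Next, observe that $[H, E] = \lambda E$ with $\lambda = (s\alpha_0 + t\alpha_1)(H) \in (0, 2)$ since $(s, t) \in L$. Hence the $U(\lis)$-submodule of $V$ generated by $E$ equals $\sum_{k \geq 0} \bbC \cdot X^k E$, and by irreducibility of $V$ this coincides with $V$, so $V$ is a lowest weight module with lowest weight vector $E$. It remains to rule out that $V$ is finite-dimensional: if $X^n E = 0$ for some minimal $n \geq 1$, then $X^{n - 1} E$ would be a highest weight vector of weight $\lambda + 2(n - 1)$, and the classification of finite-dimensional irreducible $\lisl_2$-modules would force $\lambda + 2(n - 1) = -\lambda$, i.e.\ $\lambda = 1 - n \leq 0$, contradicting $\lambda \in (0, 2)$. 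So $V$ is an irreducible lowest weight $\lis$-module with lowest weight vector $E$, as required.

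The only substantive content is the vanishing $[Y, E] = 0$, which is an immediate combinatorial consequence of the type A condition on $(s, t)$; the remaining steps are routine $\lisl_2$-representation theory, so no real obstacle arises.
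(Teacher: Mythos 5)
Your proof is correct and follows essentially the same route as the paper: the key point in both is that the type A condition forces both components of $[Y, E]$ to land in non-root spaces, so $[Y, E] = 0$. The extra bookkeeping you supply (that $E$ then generates an infinite-dimensional lowest weight module because $\lambda \in (0,2)$) is accurate and merely fills in details the paper leaves implicit.
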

\begin{proof}
  Since $(s - \klis, t - \llis) \not\in R$ and
  $(s - \mlis, t - \nlis) \not\in R$ for the root $(s, t)$
  of type A, we know that acting $Y$ on the type A root
  vector will result in 0. This shows the lemma.
\end{proof}
\begin{lem}
  \label{typeCIsLowestOrTypeB}
  Let $M$ be an irreducible $\lis$-module containing a root vector (say $v$)
  with respect to type C root in $L$.
  Then one of the following conditions \cref{typeCItem1} or \cref{typeCItem2} is hold.
  \begin{newEnum}
    \item \label{typeCItem1} $M$ is a lowest weight module such that $v$ is a lowest element.
    \item \label{typeCItem2} $M$ contains a real root vector with respect to a real root in $-L$.
  \end{newEnum}
\end{lem}
\begin{proof}
  The type C root $(s, t)$ can be written with some real root
  $(s_r, t_r)$ that
  $(\klis - s_r, \llis - t_r)$ or $(\mlis - s_r, \nlis - t_r)$.
  Therefore, the root vector $E$ of type C
  becomes either zero or a real root vector when $Y$ act on it.
  If $E$ becomes 0 under the action of $Y$,
  then $E$ generates an irreducible lowest weight module.
  If $E$ becomes a real root vector,
  then the real root for this vector is in $-L$,
  and this lemma is shown.
\end{proof}
We also give the type A, B, C distinction to the root of $-L$
by defining \cref{classifyRootsByType}.
Then, if there is a unitary principal or complementary series representation
that passes through a root vector of type C in $L, -L$,
it will also pass through the root vector of type B in $-L, L$.
Therefore, We have only to classify the modules
that contains a type B root space.
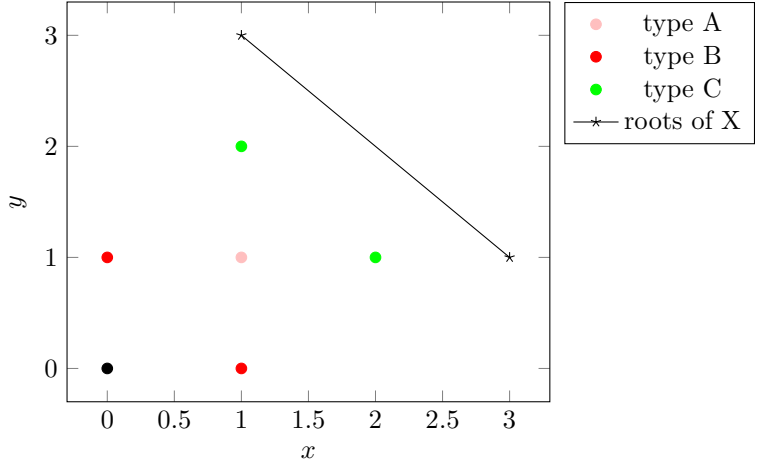
\begin{figure}[H]
  \caption{$a = 3 ,\; X = c_0 r_0 (e_1) + c_1 r_1 (e_0)$}
  \pgfplotsset{width=8cm}
  \begin{tikzpicture}
    \begin{axis}[compat = newest,xlabel={$x$},ylabel={$y$},
      legend pos=outer north east]
      \addplot[pink, only marks] coordinates{(1,1)};
      \addplot[red, only marks] coordinates{(1,0)(0,1)};
      \addplot[green, only marks] coordinates{(2,1)(1,2)};
      \addplot coordinates{(3,1)(1,3)};
      \addplot[only marks] coordinates{(0,0)};
      \legend{type A, type B, type C, roots of X}
    \end{axis}
  \end{tikzpicture}
\end{figure}

\begin{figure}[H]
  \caption{$a = 3 ,\; X = c_0 r_0 (e_1) + c_1 r_1 r_0 (e_1)$}
  \pgfplotsset{width=8cm}
  \begin{tikzpicture}
    \begin{axis}[compat = newest,xlabel={$x$},ylabel={$y$},
      legend pos=outer north east]
      \addplot[pink, only marks] coordinates{(1,1)(1,2)(2,2)(2,3)(2,4)};
      \addplot[red, only marks] coordinates{(1,0)(0,1)(1,3)};
      \addplot[green, only marks] coordinates{(2,1)(2,5)};
      \addplot coordinates{(3,1)(3,8)};
      \addplot[only marks] coordinates{(0,0)};
      \legend{type A, type B, type C, roots of X}
    \end{axis}
  \end{tikzpicture}
\end{figure}
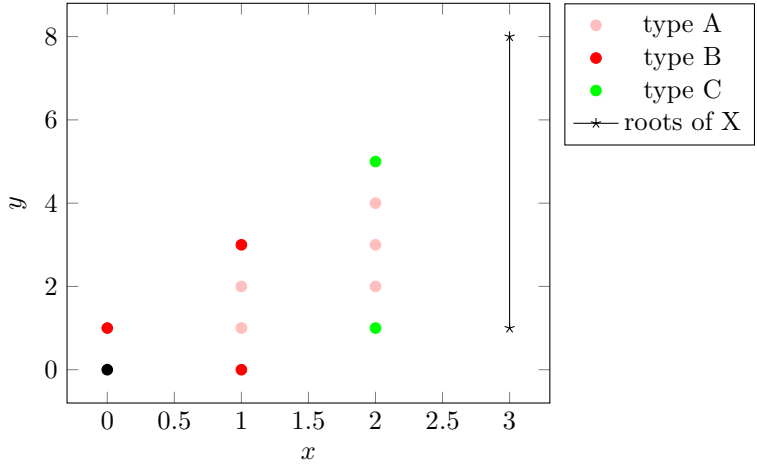
\section{Irreducible modules which contains a root space with respect to a type B root}
We consider an irreducible decomposition of $\lig$ by $\lis$, and
we consider an irreducible component $M$ containing a type B root space.
The multiplicity of a real root space is 1.
We can take $0 < \lambda < 2$ such that $\{ \lambda + 2k' \mid k' \in \bbZ \}$
is the set of the eigenvalues of $H$ in $M$.
We consider the $H$ eigenspace of $M$ such that the eigenvalue is $\lambda$.
We assume this eigenspace is $\lig_{s \alpha_0 + t \alpha_1}$ such that $(s, t) \in L$,
and $(s, t)$ is real root.
We consider $k$ such that $s_1 (k) = 0$ in \cref{s_1} in \S 5.
We show that it is not an integer.

Let $e_0, e_1, f_0, f_1, h_0$ and $h_1$ be Chevalley generators.
Using some $c_0, c_1 \in \bbR, w_0, w_1 \in \mcl{W}$, and $(p, q) \in \{ (0, 1), (0, 0), (1, 1) \}$,
let $X = c_0 w_0 (e_p) + c_1 w_1 (e_q)$.
Suppose $s > t$.
We take the root vector $E$ with respect to the root $s \alpha_0 + t \alpha_1$.
We define $\lambda$ by $H E = \lambda E$,
and define $k_0$ by $[- c_0^2 w_0 (h_p), E] = k_0 E$.
Thus $s_1 (k) = 0$ implies
\begin{align*}
  k = \frac{1 - \lambda \pm \sqrt{(\lambda - 1)^2 + 4 k_0}}{2}.
\end{align*}
We put
\begin{align*}
  k_+ &= \frac{1 - \lambda + \sqrt{(\lambda - 1)^2 + 4 k_0}}{2},\\
  k_- &= \frac{1 - \lambda - \sqrt{(\lambda - 1)^2 + 4 k_0}}{2}
\end{align*}
and we show that $k_\pm \not\in \bbR$ or $0 < k_\pm < 1$.

From \cref{dominantLem}, we can write
$c_0 w_0 (e_p) \in \lig_{F_{i + 1} \alpha_0 + F_i \alpha_1} ,\; (s, t) = (F_{i' + 1}, F_{i'})$ such that
$0 \leq i' < i$. $F_k$ is strictly increasing with respect to $k$. Since $F_{k + 2} = a F_{k + 1} - F_{k}$ and $a \geq 3$,
we have $F_{k} > 2 F_{k - 1}$. Since
\begin{align*}
  \lambda = \left\{
    \begin{aligned}
      \frac{F_{i' + 1}}{F_{i + 1}} \hsp (\text{when } i = j - 1)\\
      \frac{F_{i' + 1} + F_{i'}}{F_{i + 1} + F_i} \hsp (\text{when } i = j)\\
      \frac{F_{i'}}{F_i} \hsp (\text{when } i = j + 1)
    \end{aligned}
  \right\},
\end{align*}
we have $0 < \lambda < 1$.
When $(\lambda - 1)^2 + 4 k_0 < 0$ or $(\lambda - 1)^2 + 4 k_0 \not\in \bbR$, $k_\pm$ are imaginary numbers.
Therefore we can assume $(\lambda - 1)^2 + 4 k_0 \geq 0$.
From $0 < \lambda < 1$, it is clear that $k_+ > 0$ and $k_- < 1$.
To show $k_+ < 1$, we need to show
\begin{align*}
  1 - \lambda + \sqrt{(\lambda - 1)^2 + 4 k_0} < 2.
\end{align*}
we can easily show that it is reduced to $k_0 < \lambda$.
Also, to show that $k_- > 0$,
we need to show
\begin{align*}
  1 - \lambda - \sqrt{(\lambda - 1)^2 + 4 k_0} > 0.
\end{align*}
we can easily show that it is reduced to $k_0 < 0$.
In summary, we have only to show that $k_0 < 0$.

First, consider the case $(s, t) = (1, 0)$, i.e.,
$E \in \lig_{\alpha_0}$. Since
\begin{align*}
  k_0 E &= [- c_0^2 r_0 r_1 r_0 \ldots r_{1 - p} (h_p), E]\\
  &= [- c_0^2 (F_{i + 1} h_0 + F_i h_1), E]\\
  &= - c_0^2 (2 F_{i + 1} - a F_i) E\\
  &= - c_0^2 (F_{i + 1} + F_{i - 1}) E,
\end{align*}
we have $k_0 < 0$.
When $(s, t) = (0, 1)$, we can show that $k_0 < 0$
by replacing $i$ with $j$, $p$ with $q$ and making the same argument.

If $(s, t)$ is general and $s > t$, we can write $(s, t) = (F_{i' + 1}, F_{i'})$.
Let $p'$ be 0 or 1, we can write $E = r_0 r_1 r_0 \ldots r_{1 - p'} (e_{p'})$.
From this, we have
\begin{align*}
  [- c_0^2 w_0 (h_p), E] &= - c_0^2 [r_0 r_1 r_0 \ldots r_{1 - p} (h_p), r_0 r_1 r_0 \ldots r_{1 - p'} (e_{p'})]\\
  &= - c_0^2 r_0 r_1 r_0 \ldots  r_{1 - p'} [r_{p'} r_{1 - p'} r_{p'} \ldots r_{1 - p} (h_p), e_{p'}].
\end{align*}
We consider $k_0$ and $c_0$ when
$i$ is replaced by $i - i'$, and rewrite them as $k_0'$ and $c_0'$.
Considering $(s, t) = (1, 0)$ or $(0, 1)$ cases,  we have $[r_{p'} r_{1 - p'} r_{p'} \ldots r_{1 - p} (h_p), e_{p'}] = - \frac{k_0'}{c_0'} e_{p'}$.
That is, $k_0 = \frac{c_0^2}{c_0'^2} k_0'$.
Since $k_0' < 0$, we have $k_0 < 0$. When $s < t$, we can show that $k_0 < 0$ as well.

From the above, it can be shown that $k_0 < 0$ in any case,
i.e., $k$ is not an integer. From this, we can see the following.
\begin{thm}
  \label{classification}
  We consider an irreducible decomposition of $\lig$ by the action of $\lis$.
  \begin{newEnum}
    \item \label{classification1} Let $M$ is an irreducible component of decomposition of $\lig$, which
    contain a root space for a type B root $s \alpha_0 + t \alpha_1$.
    Then, $M$ is an unitary principal or complementary series representation.
    \item \label{classification2} (cf. \cite[Proposition 7.3]{tsu}) There is an unitary principal
    series representation containing an 1-dimensional space in $\lih$.
    \item $\lig$ is decomposed into a direct sum of $\lis$-submodules described in \cref{classification1}
    and \cref{classification2} above, $\lis$ itself, irreducible lowest weight modules,
    and irreducible highest weight modules. \qed
  \end{newEnum}
\end{thm}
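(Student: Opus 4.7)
The plan is to assemble \cref{classification} from the calculations of \S 5 and the type classification of \S 6, since most of the real work is already done by the time the theorem is stated. I will not need new ideas, but I do need to be careful about organizing the three parts and about how type B and type C roots interact.

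For part \cref{classification1}, the strategy is to show that for an irreducible component $M$ containing a root space for a type B root $(s,t) \in L$, there is no integer solution $k$ to $s_1(k) = 0$. Taking an eigenvector $E$ as in \cref{EgeneratesIrreducibleModule} with $H E = \lambda E$ where $\lambda \in (0,2)$, and using the formula at the end of \S 5 for type B, the candidate values of $k$ are $k_\pm = \tfrac{1 - \lambda \pm \sqrt{(\lambda - 1)^2 + 4 k_0}}{2}$. The core observation to carry out is that $k_0 < 0$: under this inequality, either the discriminant is negative (so $k_\pm \notin \bbR$) or the discriminant is in $[0,(\lambda-1)^2)$, forcing $k_\pm \in (0,1)$ by elementary bounds. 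In either case $k_\pm \notin \bbZ$, so $M$ is neither a highest nor a lowest weight module, and by the unitarizability established in \cref{decomposabilityOfU} it must be a unitary principal or complementary series representation. The main work is the inequality $k_0 < 0$, which is reduced to the base cases $(s,t) = (1,0)$ and $(0,1)$ by conjugating by Weyl group elements, and then $k_0 = -c_0^2(F_{i+1} + F_{i-1}) < 0$ is immediate from $F_{i\pm 1} \geq 0$ and $i \geq 1$ (the latter from \cref{dominantLem}).

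For part \cref{classification2}, the statement is essentially that the orthogonal complement in $\lih$ of $\bbR H$ yields a one-dimensional eigenspace of $H$ of eigenvalue $0$ that generates a principal series. This was already established in \S 5: there are two irreducible components of $\lig$ meeting the zero eigenspace of $H$ on $\lih$; one is $\lis$ itself, and for the other the Casimir scalar $\mu$ satisfies $\mu < -1$ by \cite[Proposition 7.3]{tsu}, so $8\mu + 1 < 0$ and $s_1(k) = 0$ has no real, let alone integer, solutions. Thus this component is also unitary principal, and the proof of \cref{classification2} is a direct citation of that paragraph.

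For part \cref{classification3}, I combine \cref{decomposabilityOfG} with \cref{classifyRootsByType}, \cref{typeAIsLowest}, and \cref{typeCIsLowestOrTypeB}. Every irreducible summand of $\lig$ other than $\lis$ either contains some root vector or is contained in $\lih$. The summand in $\lih$ is covered by \cref{classification2}. A summand containing a root vector for a type A root in $L \cup -L$ is a lowest or highest weight module by \cref{typeAIsLowest} and the symmetric statement for $-L$. A summand containing a type C root vector is either a highest/lowest weight module, or, by \cref{typeCIsLowestOrTypeB}, also contains a type B root vector in the opposite region, and is therefore already counted among the modules in \cref{classification1}. Finally, a summand containing a type B root vector is covered by \cref{classification1}. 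The main obstacle is purely bookkeeping: ensuring that a principal or complementary series module crossing $L$ and $-L$ is not double counted, which is handled by the observation already made after \cref{typeCIsLowestOrTypeB} that type C modules in $L$ which are not highest/lowest weight pass through a type B root vector in $-L$, so enumerating type B roots in $L \cup -L$ captures all of them exactly once.
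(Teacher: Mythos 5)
Your proposal follows the paper's own route essentially step for step: part (1) is exactly the paper's Section 7 argument reducing everything to $k_0<0$ via the base cases $(1,0)$, $(0,1)$ and Weyl conjugation, part (2) is the same citation of the $\mu<-1$ computation from Section 5, and part (3) is the same assembly of \cref{decomposabilityOfG}, \cref{classifyRootsByType}, \cref{typeAIsLowest}, and \cref{typeCIsLowestOrTypeB}. The proposal is correct and matches the paper's proof.
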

From \cite[\S 3]{km}, the multiplicity of each root of $\lig$ is calculated.
Using this, we can find how many modules appear such that the following condition is satisfied:
the modules are highest or lowest modules, and eigenvalues of $H$ for root vectors with the highest or the lowest roots
are certain value.

First, the modules which contain root spaces in $L$ and $-L$ can be seen from
previous contents.
Among the positive root spaces not in $L$,
those with the smallest eigenvalue in $H$ are considered together.
Let $\lambda_H$ be their eigenvalue and $d_H$ be their dimensions.
Suppose $p_H$ modules which contain space with eigenvalue $\lambda_H$
that also contain the root spaces already obtained.
Then there are $d_H - p_H$ lowest weight modules with the root with
eigenvalue $\lambda_H$ as the lowest root.
The multiplicities of modules can be obtained inductively by replacing
$\lambda_H$ with the next smallest eigenvalue of $H$
and performing the same calculation.
Negative root spaces can be classified by the same calculation.
\section{Unitary principal series representation and complementary series representation}
In this section, we consider a module (say $M$) that is neither highest weight module
nor lowest weight module containing a root vector about the root of type B.
We compute whether the module is
a unitary principal series representation or a complementary series representation.
First, we state the following lemma.
\begin{lem}
  \label{principal}
  If $8 \mu \leq -1$, then $M$ is a unitary principal series representation.
  If $8 \mu > -1$, then $M$ is a complementary series representation.
\end{lem}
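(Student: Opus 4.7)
The plan is to reduce the statement to the standard dichotomy for unitarizable irreducible $\lisl_2(\bbR)$-modules without extremal weight vectors, and then match the Casimir parameter to each family. By \cref{decomposabilityOfG}, any irreducible component of $\lig$ other than $\lis$ itself is unitarizable with respect to $(\cdot\mid\cdot)_0$, so in particular $M$ is unitarizable. Combined with the hypothesis that $M$ has neither a highest nor a lowest weight vector, this forces $M$ to lie in one of two families of unitarizable irreducible $\lisl_2(\bbR)$-representations: the unitary principal series or the complementary series. What remains is to identify which family corresponds to which range of $8\mu$.

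To this end I would set up the recursion governing the Hermitian norms $n_k := (v_k, v_k)_0$ along a basis of $H$-eigenvectors. Fix $\lambda \in [0, 2)$ and pick nonzero $v_k$ with $H v_k = (\lambda + 2k) v_k$, normalized so that $Y v_k = v_{k-1}$ (possible because $M$ has no lowest weight). The identity $X Y v_k = s_1(k) v_k$ from \cref{s_1} gives $X v_{k-1} = s_1(k) v_k$, and applying condition \cref{skewAdjoint} to $s = X$ (using $\omega_0(X) = Y$) yields
\begin{align*}
  s_1(k)\, n_k = -\, n_{k-1}.
\end{align*}
Because the Casimir element is fixed by $\omega_0$ and acts on $M$ as the scalar $\mu$, both $\mu$ and $\lambda$ are real, hence $s_1(k) \in \bbR$. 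Positive-definiteness of $(\cdot\mid\cdot)_0\rvert_M$ is then equivalent to $s_1(k) < 0$ for every integer $k$.

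The dichotomy now falls out of elementary analysis of $s_1(k) = \frac{(8\mu + 1) - (\lambda + 2k - 1)^2}{4}$. If $8\mu \leq -1$, the numerator is non-positive for every real $k$, and the no-extremal-vector hypothesis precludes integer zeros of $s_1$, so $M$ fits the definition of a unitary principal series representation. If instead $8\mu > -1$, the real zeros $k_\pm = \frac{1 - \lambda \pm \sqrt{8\mu + 1}}{2}$ of $s_1$ bound an open interval of length $\sqrt{8\mu + 1}$ on which $s_1$ is positive; unitarizability forces every integer to lie outside this interval, placing $M$ in the complementary series. I do not expect a substantive obstacle: the essential point is the correct sign computation in the recursion and the observation that $\mu \in \bbR$, after which the two cases are a straightforward algebraic check against the definitions of the two series.
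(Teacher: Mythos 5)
Your argument is correct in outline but takes a genuinely different route from the paper's. The paper works directly inside the Howe--Tan model: it realizes $M \cong U(\nu^+,\nu^-)$, solves for $\nu^\pm$ in terms of $\lambda$ and $8\mu$, and then verifies the sufficient conditions of \cite[\S III Theorem 1.1.3]{ht} --- namely $\nu^+ + \ovl{\nu^-} = 1$ when $8\mu \le -1$, and $-\nu^+,\,\nu^- - 1 \in (l-1,l)$ when $8\mu > -1$. The second verification is not soft: it uses the module-specific estimates $0 < \lambda < 1$ (from the growth of the $F_n$) and $k_0 < 0$ (from \S 7), which give $8\mu < 0$ and place both parameters in the single interval $(-1,0)$. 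You instead invoke the Bargmann/Howe--Tan classification of irreducible unitarizable admissible $\lisl_2$-modules without extremal vectors --- only the unitary principal and complementary series occur --- and then separate the two families by the Casimir eigenvalue, principal having $8\mu \le -1$ and complementary having $8\mu \in (-1,0)$. This is shorter and avoids the $F_n$ and $k_0$ estimates entirely (in particular $8\mu < 0$ comes for free from the classification rather than from $k_0 < 0$), at the cost of using the full classification as a black box where the paper only needs one sufficiency criterion. Two small points of hygiene: your norm recursion $s_1(k)\,n_k = -n_{k-1}$ only re-establishes unitarizability, which \cref{decomposabilityOfU} already supplies, so it is not what identifies the series --- that identification is carried entirely by the classification together with the Casimir ranges; and you should state explicitly that $8\mu \ge 0$ cannot occur for such an $M$ (it is excluded by the classification), since the second clause of the lemma nominally covers that range.
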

\begin{proof}
  From \cite[\S II 1.2]{ht}, $M$ is isomorphic to $U (\nu^+, \nu^-)$.
  $U (\nu^+, \nu^-)$ is a $\lisl_2$-module with $H$ eigenvectors
  $\{ v_n \mid n \in \bbZ \}$ as a basis of linear space, such that
  \begin{align*}
    H v_n &= (\nu^+ - \nu^- + 2j) v_n \hsp (n \in \bbZ),\\
    X v_n &= (\nu^+ + n) v_{n + 1},\\
    Y v_n &= (\nu^- - n) v_{n - 1},\\
    8\mu &= (\nu^+ + \nu^- - 1)^2 - 1.
  \end{align*}
  From \cite[\S III Theorem 1.1.3]{ht},
  if $\nu^+ + \ovl{\nu^-} = 1$, $U (\nu^+, \nu^-)$ is
  a unitary principal series representation.
  When $8 \mu \leq -1$, from
  \begin{align*}
    \lambda &= \nu^+ - \nu^- \in \bbR,\\
    8\mu &= (\nu^+ + \nu^- - 1)^2 - 1 < -1,
  \end{align*}
  using $b \in \bbR$ we can write
  \begin{align*}
    \nu^+ - \nu^- &= \lambda,\\
    \nu^+ + \nu^- &= 1 + bi. \hsp (i = \sqrt{-1})
  \end{align*}
  In this case, we have
  \begin{align*}
    \nu^+ + \ovl{\nu^-} &= \frac{\lambda + 1}{2} + \frac{b}{2}i
    + \frac{- \lambda + 1}{2} - \frac{b}{2}i\\
    &= 1.
  \end{align*}
  Therefore, $M$ is a unitary principal series representation.

  Consider the case when $8 \mu > -1$.
  From \cite[\S III Theorem 1.1.3]{ht},
  if $\nu^\pm \in \bbR$ and if
  $\nu^- - 1$ and $- \nu^+$ are both contained in the interval
  $(l - 1, l)$ with some $l \in \bbZ$, then $U (\nu^+, \nu^-)$ is a complementary series representation.
  From $8 \mu > -1$, we have
  \begin{align*}
    \nu^+ + \nu^- &= 1 \pm \sqrt{8 \mu + 1},\\
    \nu^+ - \nu^- &= \lambda.
  \end{align*}
  Therefore, we have
  \begin{align*}
    - \nu^+, \nu^- - 1 = \frac{- \lambda - 1 \pm \sqrt{8 \mu + 1}}{2}.
  \end{align*}
  We show that they are in $(-1, 0)$.

  We show first that $0 < \lambda < 1$.
  Let $n, m$ be integers such that $n > m \geq 0$.
  We can write
  \begin{align*}
    \lambda = \frac{2 (F_{m + 1} + F_m)}{F_{n + 1} + F_n}.
  \end{align*}
  It is clear that $\lambda > 0$.
  From $a \geq 3$, for integer $z \geq 0$, we have
  \begin{align*}
    F_{z + 2} &= a F_{z + 1} - F_z\\
    &> (a - 1) F_{z + 1}\\
    &\geq 2 F_{z + 1}.
  \end{align*}
  Hence we have
  \begin{align*}
    \frac{F_{m + 1} + F_m}{F_{n + 1} + F_n} < \frac{1}{2}.
  \end{align*}
  Therefore, we have $\lambda < 1$.
  We show that
  \begin{align*}
    -1 < \frac{- \lambda - 1 + \sqrt{8 \mu + 1}}{2}.
  \end{align*}
  From $\lambda < 1$, we have $-1 < \frac{- \lambda - 1}{2}$.
  Therefore, this inequality is shown.
  Next we show
  \begin{align*}
    \frac{- \lambda - 1 + \sqrt{8 \mu + 1}}{2} < 0.
  \end{align*}
  We have $8 \mu = \lambda (\lambda - 2) + 4 k_0$.
  From $0 < \lambda < 1$, we have $\lambda (\lambda - 2) < 0$.
  Also, since $k_0 < 0$, we have $8 \mu < 0$. Therefore,
  we have $\sqrt{8 \mu + 1} < 1$.
  Using $0 < \lambda$ again, we know that
  \begin{align*}
    \frac{- \lambda - 1 + \sqrt{8 \mu + 1}}{2} < 0.
  \end{align*}
  For
  \begin{align*}
    \frac{- \lambda - 1 - \sqrt{8 \mu + 1}}{2} < 0,
  \end{align*}
  this is clear from $\lambda > 0$.
  Finally, we show
  \begin{align*}
    -1 < \frac{- \lambda - 1 - \sqrt{8 \mu + 1}}{2}.
  \end{align*}
  From $k_0 < 0$ and $8 \mu = \lambda^2 - 2 \lambda + 4 k_0$,
  we have $\lambda^2 - 2 \lambda > 8 \mu$.
  From this and $\lambda < 1$ we get $1 - \lambda > \sqrt{8 \mu + 1}$,
  which can be transformed to
  \begin{align*}
    -1 < \frac{- \lambda - 1 - \sqrt{8 \mu + 1}}{2}.
  \end{align*}
  From the above, $\frac{- \lambda - 1 \pm \sqrt{8 \mu + 1}}{2}$
  are both in $(-1, 0)$. Therefore, $M$ is
  a complementary series representation.
\end{proof}
Hereafter, we want to determine when $M$ is complementary series.
First, we consider the case where $i = j$.
we have
\begin{align}
  \tag{B1}\label{8mu}
  \begin{aligned}
    8 \mu &= \lambda^2 - 2 \lambda + 4 k_0,\\
    \lambda &= \frac{2 (F_{n + 1} + F_n)}{F_{i + 1} + F_i},\\
    k_0 &= \frac{-2 (2 F_{i + 1 - n} - a F_{i - n})}{a (F_i^2 + F_{i + 1}^2) - 4 F_i F_{i + 1} - 2},
  \end{aligned}
\end{align}
where $n$ is an integer such that $i > n \geq 0$.
That is, $8 \mu$ is determined by $i, n$, and $a$.
We show that $8 \mu$ is greater than $-1$ with finite exceptions.
\begin{lem}
  \label{MonotonicalForN_IEqualsJ}
  We assume $i = j$. If we consider $8 \mu$ to be a function of $n$
  by \cref{8mu}, $8 \mu$ is monotonically decreasing
  with respect to $n$.
\end{lem}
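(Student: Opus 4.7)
The plan is to exploit the fact that in equation \cref{8mu}, the quantity $k_0$ depends only on $i$ and $a$, not on $n$, so the entire $n$-dependence of $8\mu$ is funneled through $\lambda$. Rewriting $8\mu = (\lambda - 1)^2 - 1 + 4k_0$, we obtain $\frac{d(8\mu)}{d\lambda} = 2(\lambda - 1)$, which is negative provided $\lambda < 1$. The bound $\lambda < 1$ was already established in the proof of \cref{principal} (from the estimate $\frac{F_{m+1}+F_m}{F_{n+1}+F_n} < \frac{1}{2}$ for $n > m \geq 0$), applied with $m = n$ and $n$ replaced by $i$, which is legitimate since $i > n \geq 0$. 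So $8\mu$ is a strictly decreasing function of $\lambda$ on the range we care about.

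Next I would show that $\lambda$ is a strictly increasing function of $n$ on $\{0, 1, \ldots, i-1\}$. Since the denominator $F_{i+1} + F_i$ is fixed, this reduces to the strict monotonicity of $n \mapsto F_{n+1} + F_n$. To verify this, it suffices to show $F_{n+2} > F_n$ for all $n \geq 0$, i.e.\ $a F_{n+1} - F_n > F_n$, or equivalently $a F_{n+1} > 2 F_n$. A short induction using the recursion $F_{k+2} = a F_{k+1} - F_k$ and the hypothesis $a \geq 3$ shows first that $F_{n+1} \geq F_n$ for $n \geq 0$ (with strict inequality for $n \geq 0$ since $F_1 = 1 > 0 = F_0$), and then $a F_{n+1} \geq 3 F_{n+1} \geq 3 F_n > 2 F_n$ as soon as $F_n > 0$; the base case $n = 0$ gives $a F_1 = a \geq 3 > 0 = 2 F_0$, and the claim follows.

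Combining the two monotonicities, $8\mu$ is strictly decreasing in $n$. The argument is essentially a chain-rule observation, so no real obstacle arises; the only care needed is to make sure the bound $\lambda < 1$ genuinely applies (it does, because the constraint $i > n \geq 0$ in \cref{8mu} is exactly the hypothesis under which the previous estimate was derived) and that the monotonicity of $F_{n+1} + F_n$ is not spoiled at small indices, which is why one checks $n = 0$ separately.
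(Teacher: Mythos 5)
Your proposal is correct and follows essentially the same route as the paper's own proof: observe that $k_0$ is independent of $n$, that $\lambda$ is increasing in $n$, and that $8\mu = \lambda(\lambda-2) + 4k_0$ is decreasing in $\lambda$ on $(0,1)$. You merely supply the details (the induction showing $F_{n+1}+F_n$ is increasing and the bound $\lambda<1$) that the paper states without proof.
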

\begin{proof}
  $\lambda$ is monotonically increasing with respect to $n$.
  Since $8 \mu = \lambda (\lambda - 2) + 4 k_0$
  and $0 < \lambda < 1$, we know that
  $\lambda (\lambda - 2)$ is monotonically decreasing with respect to $n$.
  Since $\{ F_k \}$ is monotonically increasing with respect to $k$,
  $4 k_0$ is monotonically decreasing with respect to $n$.
  Therefore, $8 \mu$ is monotonically decreasing with respect to $n$.
\end{proof}
To show that $8 \mu$ is greater than $-1$ with finite exceptions,
we need to examine when $n$ is large.
\begin{lem}
  \label{MonotonicalForI_IEqualsJ}
  We assume $i = j ,\; n = i - 1$. If we consider $8 \mu$ to be a function of $i$
  by \cref{8mu}, $8 \mu$ is monotonically increasing
  with respect to $i$.
\end{lem}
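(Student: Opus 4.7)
The plan is to pass to the ratio $r_i := F_{i+1}/F_i$. By the recurrence $F_{i+1} = aF_i - F_{i-1}$, we have $r_i = a - 1/r_{i-1}$, so $(r_i)_{i \geq 1}$ strictly decreases from $r_1 = a$ to the fixed point $\phi := (a+\sqrt{a^2-4})/2$. Using the identity $F_i^2 + F_{i+1}^2 = aF_iF_{i+1} + 1$ from $(F_i, F_{i+1})$ being a real root, together with $2F_{i+1} - aF_i = F_{i+1} - F_{i-1}$, the expressions from \cref{8mu} (with $n = i-1$) simplify to
\begin{align*}
  \lambda(i) = \frac{2(a+1-r_i)}{r_i+1}, \qquad k_0(i) = \frac{-2(F_{i+1}-F_{i-1})}{(a-2)\bigl((a+2)F_iF_{i+1}+1\bigr)}.
\end{align*}

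Write $8\mu(i) = g(r_i) + 4k_0(i)$ where $g(r) := \lambda(r)^2 - 2\lambda(r) = 4(a+1-r)(a-2r)/(r+1)^2$. A direct computation gives $g'(r) = 4(a+2)(3r-2a-1)/(r+1)^3$. For $r \geq \phi$ this is positive because $3\phi \geq 2a+1$ is equivalent, after squaring, to $(2a-5)(a+2) \geq 0$, which holds for $a \geq 3$. Since $r_i$ strictly decreases in $i$, the term $g(r_i)$ strictly decreases, contributing negatively to the change in $8\mu$. On the other hand, the numerator of $k_0(i)$ grows like $F_i$ while the denominator grows like $F_i^2$, so $k_0(i) < 0$ with $|k_0(i)|$ of order $1/F_i$; in particular $k_0(i)$ strictly increases to $0$, contributing positively.

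The main obstacle is the quantitative comparison of these two contributions. The essential observation is that $r_i - r_{i+1} = (r_i^2 - ar_i + 1)/r_i = 1/(r_iF_i^2)$ (using the hyperbola identity a second time), whence by the mean value theorem $g(r_i) - g(r_{i+1}) = O(1/F_i^2)$. In contrast, $k_0(i+1) - k_0(i)$ is of order $1/F_i$, because $k_0(i)$ itself is of that order and does not tend to a nonzero limit. This asymptotic dominance gives the desired sign of the change for all sufficiently large $i$. To promote this to every $i \geq 1$, I would compute $8\mu(i+1) - 8\mu(i)$ as a single rational function of $F_i, F_{i+1}, a$, clear the manifestly positive denominators, and reduce modulo $F_i^2 - aF_iF_{i+1} + F_{i+1}^2 = 1$ to a polynomial inequality in $u := F_i$, $v := F_{i+1}$, and $a$; positivity of the reduced polynomial can then be verified either by explicit factoring or by term-by-term bounds using $v > u \geq 1$ and $a \geq 3$.
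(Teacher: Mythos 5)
Your algebraic reductions are all correct: the formula $\lambda=2(a+1-r_i)/(r_i+1)$ with $r_i=F_{i+1}/F_i$, the derivative $g'(r)=4(a+2)(3r-2a-1)/(r+1)^3$ and its positivity for $r\geq\phi$ (since $a\geq 3$), the identity $r_i-r_{i+1}=1/(F_iF_{i+1})$, and the rewriting of the denominator of $k_0$ as $(a-2)\bigl((a+2)F_iF_{i+1}+1\bigr)$ all check out, and the resulting picture --- $g(r_i)$ decreasing by $O(1/F_i^2)$ against $4k_0(i)$ increasing by order $1/F_i$ --- is the right heuristic. But the proof is not complete. The entire content of this lemma is the quantitative competition between these two terms, and you resolve it only ``for all sufficiently large $i$,'' with no explicit threshold and no verification of the remaining finitely many cases. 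Worse, at the smallest values the asymptotic comparison is genuinely uninformative: for $i=1$ one has $F_1=1$, so $1/F_i^2$ and $1/F_i$ are the same size and ``dominance'' says nothing; one must actually compute (e.g.\ for $a=3$, $i=1\to 2$: the $g$-term drops by about $0.18$ while $4k_0$ rises by about $1.04$). The final sentence of your proposal --- clear denominators, reduce modulo $F_i^2-aF_iF_{i+1}+F_{i+1}^2=1$, and verify positivity of the resulting polynomial in $F_i,F_{i+1},a$ ``by explicit factoring or term-by-term bounds'' --- is precisely the hard step, and it is described rather than carried out. Until that polynomial is exhibited and its positivity actually established, the lemma is not proved.

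For comparison, the paper attacks the same difficulty head-on by a different route: it interpolates $F_i=(\beta^i-\alpha^i)/(\beta-\alpha)$ to a real variable $t$, differentiates $\Lambda^2-2\Lambda+4K_0$ in $t$, clears the (positive) denominators, and writes out the resulting combination of terms $\beta^{mt+c}-\alpha^{mt+c}$ explicitly, arguing positivity from the monotonicity of $\beta^t-\alpha^t$. Your discrete-difference formulation in the variable $r_i$ is arguably cleaner and would avoid transcendental functions, but it ends at the same wall: an explicit positivity verification that must be exhibited, not promised.
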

\begin{proof}
  First we write $\{ F_i \}$ explicitly as follows.
  The real solutions of $x^2 - ax + 1 = 0$ are $x = \frac{a \pm \sqrt{a^2 - 4}}{2}$.
  As $\alpha = \frac{a - \sqrt{a^2 - 4}}{2}$, $\beta = \frac{a + \sqrt{a^2 - 4}}{2}$,
  we can write
  \begin{align*}
    F_i = \frac{\beta^i - \alpha^i}{\beta - \alpha}.
  \end{align*}
  From $n = i - 1$, we have
  \begin{align*}
    \lambda &= \frac{2 (F_{i} + F_{i - 1})}{F_{i + 1} + F_i},\\
    k_0 &= \frac{-2a}{a (F_i^2 + F_{i + 1}^2) - 4 F_i F_{i + 1} - 2}.
  \end{align*}
  Let $t$ be a real variable. We define the functions $\Lambda_1$ and $K_{01}$ as follows.
  \begin{align*}
    \Lambda_1(t) &= \frac{2 (\beta^t - \alpha^t + \beta^{t - 1} - \alpha^{t - 1})}{\beta^{t + 1} - \alpha^{t + 1} + \beta^t - \alpha^t},\\
    K_{01}(t) &= \frac{-2a(\beta - \alpha)^2}{a ((\beta^t - \alpha^t)^2 + (\beta^{t + 1} - \alpha^{t + 1})^2) - 4 (\beta^t - \alpha^t) (\beta^{t + 1} - \alpha^{t + 1}) - 2(\beta - \alpha)^2}\\
    &= \frac{-2a}{\beta^{2t + 1} + \alpha^{2t + 1} - 2}.
  \end{align*}
  We have $\lambda = \Lambda_1(i)$ and $k_0 = K_{01}(i)$.
  Using these function, we can calculate as follows.
  \begin{align*}
    \frac{d}{dt} \Lambda_1
    &= \frac{4 \log \beta (a + 2) (\beta - \alpha)}{(\beta^{t + 1} - \alpha^{t + 1} + \beta^t - \alpha^t)^2}\\
    \frac{d}{dt} (\Lambda_1^2 - 2\Lambda_1)
    &= \frac{8 \log \beta (a + 2) (\beta - \alpha) \left( (1 - a) \beta^t - (1 - a) \alpha^t + 3\beta^{t - 1} - 3\alpha^{t - 1} \right)}{(\beta^{t + 1} - \alpha^{t + 1} + \beta^t - \alpha^t)^3}\\
    \frac{d}{dt} K_{01}
    &= \frac{4a \log \beta (\beta^{2t + 1} - \alpha^{2t + 1})}{(\beta^{2t + 1} + \alpha^{2t + 1} - 2)^2}
  \end{align*}
  Clearing the denominator, we can calculate as follows.
  \begin{align*}
    &\frac{(\beta^{t + 1} - \alpha^{t + 1} + \beta^t - \alpha^t)^3 (\beta^{2t + 1} + \alpha^{2t + 1} - 2)^2}{8 (a + 2) \log \beta} \cdot \frac{d}{dt} (\Lambda_1^2 - 2\Lambda_1 + 4 K_{01})\\
    &= (a + 1) (\beta^{5t + 3} + \alpha^{5t + 3}) + (2a + 3) (\beta^{5t + 2} + \alpha^{5t + 2})\\
    &+ (a - 1) (\beta^{5t + 1} + \alpha^{5t + 1}) - 3 (\beta^{5t} + \alpha^{5t})\\
    &- 3 (\beta^{3t + 4} + \alpha^{3t + 4}) + (a - 1) (\beta^{3t + 3} + \alpha^{3t + 3})\\
    &- (2a + 1) (\beta^{3t + 2} + \alpha^{3t + 2}) - (7a + 11) (\beta^{3t + 1} + \alpha^{3t + 1})\\
    &+ (-4a + 4) (\beta^{3t} + \alpha^{3t}) + 12 (\beta^{3t - 1} + \alpha^{3t - 1})\\
    &+ 12 (\beta^{t + 3} + \alpha^{t + 3}) + (-4a + 4) (\beta^{t + 2} + \alpha^{t + 2})\\
    &- (2a + 6) (\beta^{t + 1} + \alpha^{t + 1}) + (8a + 32) (\beta^t + \alpha^t)\\
    &- (12a + 6) (\beta^{t - 1} + \alpha^{t - 1}).
  \end{align*}
  The coefficient on the left hand side is positive. Using the fact that $\beta^t + \alpha^t$ is monotonically increasing
  and $a \geq 3$,
  we can calculate that the right hand side is also positive.
  This shows that $8 \mu = (\Lambda_1^2 - 2 \Lambda_1 + 4 K_{01})(i)$ is monotonically increasing
  with respect to $i$.
\end{proof}
From \cref{MonotonicalForI_IEqualsJ}, we consider the case when $i = 1 ,\; n = 0$.
\begin{lem}
  \label{MonotonicalForA_IEqualsJ}
  We assume $i = j = 1$ and $n = 0$. If we consider $8 \mu$ to be a function of $a$
  by \cref{8mu}, $8 \mu$ is monotonically increasing
  with respect to $a$.
\end{lem}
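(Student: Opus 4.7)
The plan is to reduce the statement to an explicit rational-function computation, since with $i=j=1$ and $n=0$ every quantity in \cref{8mu} becomes a very small polynomial in $a$. First I would substitute $F_0=0$, $F_1=1$, $F_2=a$ into the formulas, obtaining
\begin{align*}
  \lambda &= \frac{2(F_1+F_0)}{F_2+F_1} = \frac{2}{a+1},\\
  k_0 &= \frac{-2(2F_2 - aF_1)}{a(F_1^2+F_2^2) - 4F_1F_2 - 2} = \frac{-2a}{a^3-3a-2}.
\end{align*}
The first clean step is then the factorization $a^3 - 3a - 2 = (a-2)(a+1)^2$, which rewrites $k_0 = \dfrac{-2a}{(a-2)(a+1)^2}$ and puts $\lambda^2$, $\lambda$, and $k_0$ over a common denominator.

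Next I would combine terms in $8\mu = \lambda^2 - 2\lambda + 4k_0$. A short cancellation yields
\begin{align*}
  8\mu(a) = \frac{-4a^2}{(a-2)(a+1)^2},
\end{align*}
so the problem collapses to showing that this single rational function is monotonically increasing for $a \geq 3$ (equivalently, $a$ an integer $\geq 3$, but monotonicity on the real interval is the cleaner statement).

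The final step is a direct derivative computation. Writing the denominator as $(a-2)(a+1)^2$ and using $\tfrac{d}{da}[(a-2)(a+1)^2] = 3(a+1)(a-1)$, the numerator of $g'(a)$ after pulling out common factors simplifies to $4a(a+1)(a^2 - a + 4)$, which is manifestly positive for $a \geq 3$; the denominator is a square, hence positive. So $g'(a) > 0$ on the relevant range and monotonicity follows.

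I do not expect a serious obstacle here: unlike \cref{MonotonicalForI_IEqualsJ}, we are not differentiating a transcendental expression involving $\beta^t$ and $\alpha^t$, but a genuine rational function in $a$. The only point where care is needed is the sign bookkeeping when clearing $(a-2)(a+1)^2$ across the three terms of $\lambda^2 - 2\lambda + 4k_0$ — a miscancellation there would obscure the clean form $\tfrac{-4a^2}{(a-2)(a+1)^2}$ on which the whole argument rests. Once that simplification is in hand, the monotonicity is immediate.
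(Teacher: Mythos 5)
Your proposal is correct and follows the same route as the paper: both reduce $8\mu$ to the explicit rational function $\frac{-4a^2}{a^3-3a-2}=\frac{-4a^2}{(a-2)(a+1)^2}$ and conclude by differentiating in $a$. You simply carry out the derivative computation (numerator $4a(a+1)(a^2-a+4)>0$) that the paper leaves implicit, and your algebra checks out.
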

\begin{proof}
  Under this assumption, we have
  \begin{align*}
    8 \mu = \frac{-4a^2}{a^3 - 3a - 2}.
  \end{align*}
  Differentiating this as a function of the real variable $a$,
  from $a \geq 3$, we know that
  $8 \mu$ is monotonically increasing
  with respect to $a$.
\end{proof}
\begin{lem}
  \label{AlmostAllsl2ModulesAreComplementary_IEqualsJ}
  When $i = j$, we consider $\lis$-modules of $\lig$ that are neither
  a highest weight module nor a lowest weight module containing a root vector about the root of type B
  obtained by \cref{classification}.
  The modules are complementary series representations,
  except for the following three types. For these exceptions,
  the modules are unitary principal series representations.
  \begin{align*}
    (a, i, n) = (4, 1, 0), (3, 1, 0), (3, 2, 1)
  \end{align*}
\end{lem}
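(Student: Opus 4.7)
The plan is to combine \cref{principal} with the three monotonicity results \cref{MonotonicalForN_IEqualsJ,MonotonicalForI_IEqualsJ,MonotonicalForA_IEqualsJ} to cut the search down to a finite explicit verification. By \cref{principal}, $M$ is a unitary principal series if and only if $8\mu \leq -1$, so the task is to pin down the admissible triples $(a,i,n)$ (with $a \geq 3$, $i \geq 1$, $0 \leq n \leq i-1$) for which $8\mu \leq -1$.

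First I would use \cref{MonotonicalForN_IEqualsJ}: for fixed $(a,i)$ the quantity $8\mu$ decreases in $n$, so the exceptional $n$'s form an upper interval in $\{0,\dots,i-1\}$, and in particular no $n$ is exceptional once $8\mu > -1$ already at $n = i-1$. Combining this with \cref{MonotonicalForI_IEqualsJ} (along $n = i-1$, $8\mu$ is increasing in $i$), the set of $i$'s for which $(a,i,i-1)$ is exceptional is a lower interval $\{1,\dots,i_0(a)\}$. Finally, by \cref{MonotonicalForA_IEqualsJ} together with the explicit formula $8\mu = -4a^2/(a^3-3a-2)$ at $(i,n)=(1,0)$, solving $-4a^2/(a^3-3a-2) \leq -1$ reduces to $a^3 - 4a^2 - 3a - 2 \leq 0$; evaluating at $a = 3, 4, 5$ (giving $-20, -14, 8$ respectively) shows $(a,1,0)$ is exceptional precisely for $a \in \{3,4\}$. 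In particular $i_0(a) = 0$ for $a \geq 5$, and no exception occurs in that range.

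The remaining work is then a finite computation. For $a = 4$ I would compute $8\mu$ at $(i,n) = (2,1)$, verify $8\mu > -1$, and invoke \cref{MonotonicalForI_IEqualsJ,MonotonicalForN_IEqualsJ} to conclude that $(4,1,0)$ is the unique exception. For $a = 3$ I would compute $8\mu$ at $(i,i-1)$ for $i = 1,2,3,4,5$ and observe that the first four yield $8\mu \leq -1$ while the fifth gives $8\mu > -1$, so $i_0(3) = 4$; this yields the four candidates $(3,i,i-1)$ for $i = 1,2,3,4$. For each of $i = 2,3,4$ I would then check that $8\mu > -1$ at $n = i-2$; by \cref{MonotonicalForN_IEqualsJ} this confirms $n = i-1$ is the only exceptional $n$ for that $(a,i)$, matching the five-case list.

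The main obstacle is purely numerical: the boundary values, namely $(3,4,3)$ where $8\mu$ lies just below $-1$ and $(3,5,4)$ where $8\mu$ lies just above $-1$, must be handled with exact rational arithmetic via the closed form $F_k = (\beta^k - \alpha^k)/(\beta - \alpha)$ with $\alpha = (a - \sqrt{a^2-4})/2$ and $\beta = (a + \sqrt{a^2-4})/2$, rather than floating-point approximations, to ensure the boundary case is not misclassified.
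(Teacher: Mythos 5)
Your proposal is correct and follows essentially the same route as the paper: reduce to the sign of $8\mu+1$ via \cref{principal}, use the three monotonicity lemmas (in $n$, in $i$ along $n=i-1$, and in $a$ at $(i,n)=(1,0)$) to confine the exceptions to a finite set, and finish by explicit evaluation at the boundary triples for $a=3,4,5$. The only differences are cosmetic — you phrase the reduction in terms of upper/lower intervals and a cubic inequality in $a$, where the paper just computes $8\mu=-25/27$ at $(5,1,0)$ and $-32/25$ at $(4,1,0)$ directly; note also that the boundary values are exact rationals since the $F_k$ are integers, so no appeal to the closed form for $F_k$ is needed.
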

\begin{proof}
  We use \cref{MonotonicalForN_IIsJMinus1}, \cref{MonotonicalForI_IEqualsJ}, and \cref{MonotonicalForA_IEqualsJ}.

  First, when $a = 5 ,\; i = 1 ,\; n = 0$, we have
  $8 \mu > -1$. Therefore, when $a \geq 5$,
  for any $i, n$, the module for $a, i, n$ is a complementary
  series representation.

  Next, when $a = 4, i = 1, n = 0$, we have $8 \mu = - 1.28 < -1$.
  Hence the module for this is a unitary principal series representation.
  On the other hand, when $a = 4, i = 2, n = 1$,
  we have $8 \mu > -1$. Therefore, when $a = 4$,
  the module for $a, i, n$ is a complementary series representation
  except when $i = 1, n = 0$.

  Finally, when $a = 3$, $8 \mu < -1$ when $i = 1, 2$ and $n = i - 1$,
  and in these four cases the module is a unitary principal series representation.
  When $n = i - 2$ or $i = 3$, we have $8 \mu > -1$.
  Therefore, we know that the module is a complementary series representation
  in other cases.

  From the above, with three exceptions,
  neither a highest weight module nor a lowest weight module containing a root vector about the root of type B
  is a complementary series representation.
\end{proof}
Next, we consider the case $i = j - 1$.
In this case, we have
\begin{align}
  \tag{B2}\label{8mu_IIsJMinus1}
  \begin{aligned}
    8 \mu &= \lambda^2 - 2 \lambda + 4 k_0,\\
    \lambda &= \frac{2 F_{n + 1}}{F_{i + 1}},\\
    k_0 &= \frac{-2 (2 F_{i + 1 - n} - a F_{i - n})}{a (F_i^2 + F_{i + 1}^2) - 4 F_i F_{i + 1} - 2},
  \end{aligned}
\end{align}
with $n$ as an integer such that $0 \leq n < i$.
As with $i = j$,
$8 \mu$ is determined by $i, n$ and $a$.
Similar to \cref{MonotonicalForN_IEqualsJ}, we have this lemma.
\begin{lem}
  \label{MonotonicalForN_IIsJMinus1}
  We assume $i = j - 1$. If we consider $8 \mu$ to be a function of $n$
  by \cref{8mu_IIsJMinus1}, $8 \mu$ is monotonically decreasing
  with respect to $n$. \qed
\end{lem}
We consider whether $8 \mu$ is monotonically increasing
with respect to $i$ when $n = i - 1$.
\begin{lem}
  \label{MonotonicalForI_IIsJMinus1}
  We assume $i = j - 1 ,\; n = i - 1$. If we consider $8 \mu$ to be a function of $i$
  by \cref{8mu_IIsJMinus1}, $8 \mu$ is monotonically increasing
  with respect to $i$.
\end{lem}
\begin{proof}
  In this case, we have
  \begin{align*}
    \lambda &= \frac{2 F_{i}}{F_{i + 1}},\\
    k_0 &= \frac{-2a}{a (F_i^2 + F_{i + 1}^2) - 4 F_i F_{i + 1} - 2}.
  \end{align*}
  Let $t$ be a real variable. We define the functions $\Lambda_2$ and $K_{02}$ as follows.
  \begin{align*}
    \Lambda_2(t) &= \frac{2 (\beta^t - \alpha^t)}{\beta^{t + 1} - \alpha^{t + 1}},\\
    K_{02}(t) &= \frac{-2a(\beta - \alpha)^2}{a ((\beta^t - \alpha^t)^2 + (\beta^{t + 1} - \alpha^{t + 1})^2) - 4 (\beta^t - \alpha^t) (\beta^{t + 1} - \alpha^{t + 1}) - 2(\beta - \alpha)^2}\\
    &= \frac{-2a}{\beta^{2t + 1} + \alpha^{2t + 1} - 2}.
  \end{align*}
  We have $\lambda = \Lambda_2(i)$ and $k_0 = K_{02}(i)$.
  Using these function, we can calculate as follows.
  \begin{align*}
    \frac{d}{dt} \Lambda_2
    &= \frac{4 \log \beta (\beta - \alpha)}{(\beta^{t + 1} - \alpha^{t + 1})^2}\\
    \frac{d}{dt} (\Lambda_2^2 - 2\Lambda_2)
    &= \frac{8 \log \beta (\beta - \alpha) \left( 2 \beta^t - 2 \alpha^t - \beta^{t + 1} + \alpha^{t + 1} \right)}{(\beta^{t + 1} - \alpha^{t + 1})^3}\\
    \frac{d}{dt} K_{02}
    &= \frac{4a \log \beta (\beta^{2t + 1} - \alpha^{2t + 1})}{(\beta^{2t + 1} + \alpha^{2t + 1} - 2)^2}
  \end{align*}
  Clearing the denominator, we can calculate as follows.
  \begin{align*}
    &\frac{(\beta^{t + 1} - \alpha^{t + 1})^3 (\beta^{2t + 1} + \alpha^{2t + 1} - 2)^2}{8 \log \beta} \cdot \frac{d}{dt} (\Lambda_2^2 - 2\Lambda_2 + 4 K_{02})\\
    &= (2a - 1) (\beta^{5t + 4} + \alpha^{5t + 4}) + 2 (\beta^{5t + 3} + \alpha^{5t + 3})\\
    &+ (\beta^{5t + 2} + \alpha^{5t + 2}) -2 (\beta^{5t + 1} + \alpha^{5t + 1})\\
    &+ 2 (\beta^{3t + 3} + \alpha^{3t + 3}) - (6a + 7) (\beta^{3t + 2} + \alpha^{3t + 2})\\
    &- 2 (\beta^{3t + 1} + \alpha^{3t + 1}) + 7 (\beta^{3t} + \alpha^{3t})\\
    &- (2a - 2) (\beta^{t + 2} + \alpha^{t + 2}) + 8 (\beta^{t + 1} + \alpha^{t + 1})\\
    &+ (6a - 2) (\beta^t + \alpha^t) - 8 (\beta^{t - 1} + \alpha^{t - 1}).
  \end{align*}
  The coefficient on the left hand side is positive. Using the fact that $\beta^t + \alpha^t$ is monotonically increasing
  and $a \geq 3$,
  we can calculate that the right hand side is also positive.
  This shows that $8 \mu = (\Lambda_2^2 - 2 \Lambda_2 + 4 K_{02})(i)$ is monotonically increasing
  with respect to $i$.
\end{proof}
\begin{lem}
  \label{MonotonicalForA_IIsJMinus1}
  We assume $i = 1, j = 2$, and $n = 0$. If we consider $8 \mu$ to be a function of $a$
  by \cref{8mu_IIsJMinus1}, $8 \mu$ is monotonically increasing
  with respect to $a$.
\end{lem}
\begin{proof}
  Under this assumption, we have
  \begin{align*}
    8 \mu = \frac{-4 (a^4 + a^3 - 3a^2 + a + 2)}{a^5 - 3a^3 - 2a^2}.
  \end{align*}
  Differentiating this as a function of the real variable $a$,
  from $a \geq 3$, we know that
  $8 \mu$ is monotonically increasing
  with respect to $a$.
\end{proof}
\begin{lem}
  \label{AlmostAllsl2ModulesAreComplementary_IIsJMinus1}
  When $i = j - 1$, We consider $\lis$-modules containing a root vector about the root of type B
  that are neither highest weight modules nor lowest weight modules
  obtained by \cref{classification}. The modules are complementary
  series representations, except for the following 4 types.
  For these exceptions,
  the modules are unitary principal series representations.
  \begin{align*}
    (a, i, n) = (5, 1, 0), (4, 1, 0), (3, 1, 0), (3, 2, 1)
  \end{align*}
\end{lem}
\begin{proof}
  We use \cref{MonotonicalForN_IIsJMinus1}, \cref{MonotonicalForI_IIsJMinus1}, and
  \cref{MonotonicalForA_IIsJMinus1}.
  First, when $a = 6, i = 0, n = 0$, $8 \mu > -1$.
  Therefore, when $a \geq 6$, the modules for $a, i, n$ are
  complementary series representations.

  When $a = 5$, if $(a, i, n) = (5, 0, 0)$, then $8 \mu \leq -1$ and the modules
  are unitary principal series representations, and the others are
  complementary series representations.

  When $a = 4$, if $(a, i, n) = (4, 0, 0)$, then the modules
  are unitary principal series representations, and the others are
  complementary series representations.

  When $a = 3$, if $(a, i, n) = (3, 1, 0), (3, 2, 1)$,
  then the modules are unitary principal series representations, and the others are
  complementary series representations.

  From the above, 4 unitary principal series representations are obtained,
  and the rest are all complementary series representations.
\end{proof}
Finally, we consider when $i = j + 1$.
In this case, we have
\begin{align}
  \tag{B3}\label{8mu_IIsJPlus1}
  \begin{aligned}
    8 \mu &= \lambda^2 - 2 \lambda + 4 k_0,\\
    \lambda &= \frac{2 F_n}{F_i},\\
    k_0 &= \frac{-2 (2 F_{i + 1 - n} - a F_{i - n})}{a (F_i^2 + F_{i + 1}^2) - 4 F_i F_{i + 1} - 2},
  \end{aligned}
\end{align}
with $n$ as an integer such that $0 \leq n < i$.
As with $i = j$,
$8 \mu$ is determined by $i, n$ and $a$.
Similar to \cref{MonotonicalForN_IEqualsJ}, we have this lemma.
\begin{lem}
  \label{MonotonicalForN_IIsJPlus1}
  We assume $i = j + 1$. If we consider $8 \mu$ to be a function of $n$
  by \cref{8mu_IIsJPlus1}, $8 \mu$ is monotonically decreasing
  with respect to $n$. \qed
\end{lem}
In the following, we consider the case when $n = i + 1$.
In this case, $8 \mu$ is not monotonically increasing
with respect to $i$.
Let $t$ be a real variable. We define the functions $\Lambda_3$ and $K_{03}$ as follows.
\begin{align*}
  \Lambda_3(t) &= \frac{2 (\beta^{t - 1} - \alpha^{t - 1})}{\beta^t - \alpha^t},\\
  K_{03}(t) &= \frac{-2a(\beta - \alpha)^2}{a ((\beta^t - \alpha^t)^2 + (\beta^{t + 1} - \alpha^{t + 1})^2) - 4 (\beta^t - \alpha^t) (\beta^{t + 1} - \alpha^{t + 1}) - 2(\beta - \alpha)^2}\\
  &= \frac{-2a}{\beta^{2t + 1} + \alpha^{2t + 1} - 2}.
\end{align*}
We have $\lambda = \Lambda_3(i)$ and $k_0 = K_{03}(i)$.
$\Lambda_3$ is monotonically increasing with respect to $t$.
We have $\Lambda_3(1) = 0$ and
\begin{align*}
  \lim_{t \to \infty} \Lambda_3(t) = \frac{2}{\beta} = \frac{4}{a + \sqrt{a^2 - 4}}.
\end{align*}
Since $F_{i + 1} > 2 F_i$, we have $0 < \Lambda_3 < 1$.
Therefore, $\Lambda_3^2 - 2 \Lambda_3$ is monotonically decreasing.
$(\Lambda_3^2 - 2 \Lambda)(1) = 0$ and
\begin{align*}
  \lim_{t \to \infty} (\Lambda_3^2 - 2 \Lambda_3)(t) = \frac{4 - 4 \beta}{\beta^2}.
\end{align*}
Considering when $a = 3$, we have
\begin{align*}
  \Lambda_3^2 - 2 \Lambda_3 (t) &> \frac{4}{\beta^2} - \frac{4}{\beta}\\
  &= \frac{-4 - 4 \sqrt{5}}{7 + 3 \sqrt{5}} = -0.9442719 \cdots.
\end{align*}
$K_{03}$ is monotonically increasing with respect to $t$.
$K_{03}(1) = \frac{-8a}{a^3 - 3a - 2}$ and $\lim_{t \to \infty} K_{03}(t) = 0$.
When $t$ becomes large enough and $K_{03}(t)$ becomes greater than -0.01,
We have $(\Lambda_3^2 - 2\Lambda_3 + K_{03})(t) > -1$.
Therefore, when $i$ is large enough, we have $8 \mu > -1$.
There are only a finite number of $(a, i, n)$s
such that $k_0$ is less than -0.01.
Calculating all of these cases, we have the following lemma.
\begin{lem}
  \label{AlmostAllsl2ModulesAreComplementary_IIsJPlus1}
  When $i = j + 1$, We consider $\lis$-modules containing a root vector about the root of type B
  that are neither highest weight modules nor lowest weight modules
  obtained by \cref{classification}. The modules are complementary
  series representations, except for the following 2 types.
  For these exceptions,
  the modules are unitary principal series representations.
  \begin{align*}
    (a, i, n) = (3, 1, 0), (3, 2, 1)
  \end{align*}
  \qed
\end{lem}
\begin{thm}
  \label{AlmostAllsl2ModulesAreComplementary}
  We consider modules obtained by \cref{classification1} of \cref{classification}.
  The modules are neither highest weight modules nor lowest weight modules
  and contain root vectors about roots of type B.
  The modules are complementary series representations,
  except those enumerated by \cref{AlmostAllsl2ModulesAreComplementary_IEqualsJ},
  \cref{AlmostAllsl2ModulesAreComplementary_IIsJMinus1}
  and \cref{AlmostAllsl2ModulesAreComplementary_IIsJPlus1}.
  For the exceptions, the modules are unitary principal series representations.
\end{thm}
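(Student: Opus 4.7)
The plan is to treat this as a case-combining theorem: the real content has already been carried out in \cref{AlmostAllsl2ModulesAreComplementary_IEqualsJ} (covering $i=j$) and \cref{AlmostAllsl2ModulesAreComplementary_INotEqualsJ} (covering $i = j-1$), and by \cref{dominantLem} the only remaining relation between $i$ and $j$ is $i = j+1$, which I expect to dispatch by a symmetry argument. So the proof has essentially three ingredients to assemble.

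First, I would fix an irreducible component $M$ as in the hypothesis and recall from \cref{classification} that its Casimir eigenvalue $\mu$ is given by the formulas at the end of \S 5. Since $M$ is neither highest nor lowest weight, \cref{principal} shows that the dichotomy ``unitary principal vs. complementary'' is controlled entirely by the sign of $8\mu + 1$. Thus the theorem reduces to determining, for each admissible triple $(a, i, n)$ (with $n$ parameterising the type B root in $L$), whether $8\mu \leq -1$ or $8\mu > -1$.

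Second, I would split on $i - j \in \{-1, 0, 1\}$ as forced by \cref{dominantLem}. The case $i = j$ is exactly \cref{AlmostAllsl2ModulesAreComplementary_IEqualsJ}, and the case $i = j-1$ is exactly \cref{AlmostAllsl2ModulesAreComplementary_INotEqualsJ}. For the remaining case $i = j+1$, I would invoke the symmetry between the two simple roots $\alpha_0$ and $\alpha_1$: the Cartan matrix is symmetric, the construction of $\lis$ in \cref{LengthOfXIs2}\cref{length2} is symmetric in the type-$\alpha$ and type-$\beta$ summands, and the region $L$ and the notion of type B root are defined symmetrically in $(\klis,\llis)$ versus $(\mlis,\nlis)$. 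Hence swapping $(i, \klis, \llis, p)$ with $(j, \mlis, \nlis, q)$ sends the $i = j+1$ configuration to an $i' = j'-1$ configuration without changing $8\mu$, and \cref{AlmostAllsl2ModulesAreComplementary_INotEqualsJ} applies verbatim, yielding the same list of exceptions.

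Finally, I would note that the two exception lists in \cref{AlmostAllsl2ModulesAreComplementary_IEqualsJ} and \cref{AlmostAllsl2ModulesAreComplementary_INotEqualsJ} together already enumerate all unitary principal series that can arise from a type B root, so the statement of the theorem follows immediately. The only genuine subtlety is making the symmetry step precise: I would point out that exchanging the indices $0$ and $1$ induces a Lie algebra automorphism of $\lig$ compatible with $\omega_0$ and with the standard form, so it preserves unitarizability and the Casimir action. I expect this symmetry verification to be the main (though small) obstacle, since everything else is a mechanical appeal to the two previous lemmas.
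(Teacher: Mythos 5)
Your proposal is correct and follows essentially the same route as the paper: the paper's proof of this theorem is a one-line appeal to \cref{AlmostAllsl2ModulesAreComplementary_IEqualsJ} and \cref{AlmostAllsl2ModulesAreComplementary_INotEqualsJ}, with the $i=j+1$ case reduced to $i=j-1$ by the same $\alpha_0 \leftrightarrow \alpha_1$ symmetry you invoke (the paper states this reduction just before treating the $i=j-1$ case). Your additional care in justifying the symmetry as a form-compatible automorphism is a reasonable elaboration of what the paper leaves implicit.
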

\begin{proof}
  It can be shown from \cref{AlmostAllsl2ModulesAreComplementary_IEqualsJ},
  \cref{AlmostAllsl2ModulesAreComplementary_IIsJMinus1}
  and \cref{AlmostAllsl2ModulesAreComplementary_IIsJPlus1}.
\end{proof}
\section*{Acknowledgements}
I would like to express my appreciation to my supervisor,
Prof. Hisayosi Matumoto for his thoughtful guidance.

\end{document}